\newtheorem{theorem}{Theorem}[section]
\newtheorem*{lemma*}{Lemma}
\newtheorem{proposition}[theorem]{Proposition}
\newtheorem{corollary}[theorem]{Corollary}
\theoremstyle{definition}
\newtheorem{definition}[theorem]{Definition}
\newtheorem{example}[theorem]{Example}
\newtheorem{conjecture}[theorem]{Conjecture}
\theoremstyle{remark}
\newtheorem{remark}[theorem]{Remark}
\numberwithin{equation}{section}
\newcommand{\abs}[1]{\lvert#1\rvert}
\newcommand{\norm}[1]{\lVert#1\rVert}
\newcommand{\A}{\mathbb{A}}
\newcommand{\C}{\mathbb{C}}
\newcommand{\CC}{\mathscr C}
\newcommand{\DD}{\mathbb{D}}
\newcommand{\R}{\mathbb{R}}
\newcommand{\Z}{\mathbb{Z}}
\newcommand{\T}{\mathbb{T}}
\newcommand{\const}{\mathrm{const}}
\newcommand{\dtext}{\textnormal d}
\newcommand{\ds}{\displaystyle}
\DeclareMathOperator{\re}{Re}
\DeclareMathOperator{\im}{Im}
\DeclareMathOperator{\loc}{loc}
\DeclareMathOperator{\Mod}{Mod}
\def\Xint#1{\mathchoice
{\XXint\displaystyle\textstyle{#1}}%
{\XXint\textstyle\scriptstyle{#1}}%
{\XXint\scriptstyle\scriptscriptstyle{#1}}%
{\XXint\scriptscriptstyle\scriptscriptstyle{#1}}%
\!\int}
\def\XXint#1#2#3{{\setbox0=\hbox{$#1{#2#3}{\int}$}
\vcenter{\hbox{$#2#3$}}\kern-.5\wd0}}
\def\dashint{\Xint-}
\def\le{\leqslant}
\def\ge{\geqslant}
\begin{document}

\title[Doubly connected minimal surfaces]{Doubly connected minimal surfaces and extremal harmonic mappings}

\author{{\scriptsize Tadeusz   Iwaniec}}
\address{Department of Mathematics, Syracuse University, Syracuse,
NY 13244, USA and Department of Mathematics and Statistics,
University of Helsinki, Finland}
\email{tiwaniec@syr.edu}
\thanks{Iwaniec was supported by the NSF grant DMS-0800416 and Academy of
Finland grant 1128331.}

\author{{Leonid V. Kovalev}}
\address{Department of Mathematics, Syracuse University, Syracuse,
NY 13244, USA}
\email{lvkovale@syr.edu}
\thanks{Kovalev was supported by the NSF grant DMS-0913474.}

\author{{Jani Onninen}}
\address{Department of Mathematics, Syracuse University, Syracuse,
NY 13244, USA}
\email{jkonnine@syr.edu}
\thanks{Onninen was supported by the NSF grant  DMS-0701059.}

\subjclass[2000]{Primary 53A10; Secondary 58E20, 30C62, 49Q05}

\date{February 14, 2010}

\keywords{Minimal surface, Bj\"orling problem, harmonic mapping, quasiconformal mapping}
\maketitle
\begin{abstract}
The concept of a conformal deformation has two natural extensions: quasiconformal and harmonic mappings. Both classes do not preserve the conformal
type of the domain, however they cannot change it in an arbitrary way. Doubly connected domains are where one first observes nontrivial conformal
invariants.  Herbert Gr\"{o}tzsch and  Johannes C. C. Nitsche addressed this issue for quasiconformal and harmonic mappings, respectively.  Combining
these concepts we obtain sharp estimates for quasiconformal harmonic mappings between doubly connected domains. We then apply our results to the
Cauchy problem for minimal  surfaces, also known as the Bj\"orling problem.
Specifically, we obtain  a sharp estimate of the modulus of a doubly connected minimal surface that evolves
from its inner boundary with a given initial slope.
\end{abstract}

 \tableofcontents

\section{Introduction: if Gr\"otzsch had met Nitsche}

The classical Bj\"orling problem~\cite{Bj,DHKW} is to find a minimal surface that contains a given curve and
has prescribed normal vector along the curve. For real-analytic data a local solution exists and admits an integral representation due to
H.~A.~Schwarz. However, the Schwarz representation tells us little about global geometric properties of the solution, as was recently
emphasized by the authors of~\cite[p.~787]{MWe}, see also~\cite{GM} and~\cite{Mi}. In this paper we study the Bj\"orling problem for closed curves.
Such curves give rise to doubly connected parametric minimal surfaces, not necessarily embedded in $\R^3$.
A natural global invariant associated with such a surface is the conformal modulus of its domain of isothermal parametrization.
We give a sharp estimate for conformal modulus of solution of a Bj\"orling problem. Our method is based on analysis of
quasiconformal harmonic mappings in doubly connected domains. Both concepts, quasiconformality and harmonicity, are generalizations
of conformal mappings. The extremal problem that we solve in this paper has roots in the works of Herbert  Gr\"otzsch (for quasiconformal mappings)
and Johannes Nitsche (for harmonic mappings). The main results of the paper are Theorems~\ref{main1}, ~\ref{th34}, and~\ref{nongraph}.

\subsection{Expanding the Notion of Conformality}
We study mappings $h = u + i v  \colon \Omega \to \C $ defined in a domain $\Omega$  of the complex plane $\,\mathbb C = \{ z= x+iy\,,\;\; x , y \,\in \mathbb R\,\}$. The partial differentiation in $\,\Omega\,$ will be expressed by the Wirtinger operators $$\frac{\partial}{\partial z} = \frac{1}{2} \left(\frac{\partial}{\partial x} - i \frac{\partial}{\partial y}\right )\; \;\;\;\textrm {and}\;\;\;\;  \frac{\partial}{\partial \bar z} = \frac{1}{2} \left(\frac{\partial}{\partial x} + i \frac{\partial}{\partial y}\right) \;, \;  $$
Accordingly, we shall abbreviate the complex derivatives of $h$ to
\[
h_z= \frac{\partial h}{\partial z} = \frac{1}{2} \left(\frac{\partial h}{\partial x} - i \frac{\partial h}{\partial y}\right )\; \;\;\;\textrm {and}\;\;\;\;  h_{\bar z} = \frac{\partial h}{\partial \bar z} = \frac{1}{2} \left(\frac{\partial h}{\partial x} + i \frac{\partial h}{\partial y}\right)
\]
In these terms the complex differential form $\,\textrm{d}h(z) = h_z(z)\, \textrm{d}z\,+\,h_{\bar z}(z)\, \textrm{d}\bar z \,$ represents the Jacobian matrix
\[ Dh(z) = \left[\begin{array}{cc} u_x&v_x\\u_y&v_y\end{array}\right]\,,\]
Its operator norm and the Hilbert-Schmidt norm are given by:
\[
 \norm{Dh}=  \abs{h_z}+ \abs{h_{\bar z}}\,,\qquad \abs{Dh}^2\;=\;  2\,(\,|h_z|^2\,+\, |h_{\bar z}|^2\,)\; = \; u_x^2 + v_x^2 + u_y^2 + v_y^2
\]
and the Jacobian determinant
\[
J_h(z)=J(z,h) =\det Dh(z)= u_x v_y \,-\,u_y v_x \; = \abs{h_z}^2-\abs{h_{\bar z}}^2\,.
\]

\subsubsection{Quasiconformal mappings}

The concept of planar quasiconformal mappings has originated around 1928 from the paper by H. Gr\"{o}tzsch~\cite{GrU}, though the term  ``quasiconformal" was coined by Ahlfors only in 1935~\cite{Ah}.  Among many equivalent definitions used nowadays, we conveniently adopt the following analytical one:
\begin{definition}
An orientation preserving homeomorphism $\,h \colon \Omega  \rightarrow \C $ of Sobolev class $\mathscr W^{1,1}_{\loc}(\Omega , \C)$ is said to be {\it $K$-quasiconformal} , $1 \le K < \infty$ , if
\begin{equation}\label{distine}
\norm{Dh(z)}^2 \le K J(z,h)\;, \qquad \mbox{ almost everywhere }\;,
\end{equation}
or equivalently
\[
  \abs{Dh(z)}^2 \;\le \left( K + \frac{1}{K}\right) J(z,h)
\]
\end{definition}
It is worth noting that $K$-quasiconformal mappings are invariant under the conformal change of the variables in both the domain and the target space. They have positive Jacobian determinant almost everywhere, thus one can speak of the {\it distortion function}
\begin{equation}
K_h(z):=\frac{\norm{Dh(z)}^2}{\det Dh(z)} = \frac{(|h_z|  + |h_{\bar z}|)^2}{|h_z|^2 -|h_{\bar z}|^2} = \frac{|h_z|  + |h_{\bar z}|}{|h_z| -|h_{\bar z}|}\leqslant  K
\end{equation}
Note the Dirichlet energy formula for $\,h\,$ over a measurable subset $\,U\subset \Omega\,$
\begin{equation}\nonumber
   \begin{split}
   \mathscr E_{_U} [h] := &\; \iint_U \|Dh\|^2 \textrm{d}x\,\textrm{d}y \;\le\,\\&\left( K + \frac{1}{K}\right) \,\iint_U J(z,h)\, \textrm{d}x \,\textrm{d}y  = \left( K + \frac{1}{K}\right)\, |h(U)|
\end{split}\end{equation}
which is finite whenever the area $\,|h(U)|\,$ of the image of $\,U\,$ is finite. In particular, $  \,h \in \mathscr W^{1,2}_{\loc}(\Omega , \C)\,.$
Another way to express the distortion inequality ~\eqref{distine} is:
\begin{equation}\label{eq14}
\abs{h_{\bar z}(z)} \le k \,\abs{h_z(z)}\;, \qquad k=\frac{K-1}{K+1}\;< 1\,.
\end{equation}
There is a useful interplay between quasiconformal mappings and the first order elliptic systems of PDEs in the complex plane. The most general linear (over $\mathbb R$) elliptic operator for orientation preserving mappings (homotopic to the Cauchy-Riemann operator) takes the form:
$$
 \mathcal B = \frac{\partial}{\partial \bar {z}} \;-\;\mu(z)\;\frac{\partial}{\partial z} \;-\;\nu(z)\;\overline{\frac{\partial}{\partial \bar {z}}} \;:\; \mathscr W^{1,2}_{\loc}(\Omega) \rightarrow \mathscr L^2_{\loc}(\Omega)
$$
where $\,\mu\,$ and $\,\nu\,$ are complex valued measurable functions such that
$$
  |\mu(z)|\;+\;|\nu(z)|  \le \;k  =  \frac{K-1}{K+1}  \;<1\,, \;\;\;a.e. \;\Omega
$$
Two special cases, referred to as the first and the second Beltrami operators, are worth noting. It is customary to investigate geometric and analytic features of a quasiconformal mapping via the celebrated first Beltrami equation, because it is linear over the complex numbers
\begin{equation}\label{beltrami}
h_{\bar z} = \mu (z) h_z, \qquad \abs{\mu (z)} \le k <1
\end{equation}
However, we shall take advantage of the {\it second Beltrami  equation}
\begin{equation}\label{nudist}
{h_{\bar z}} = \nu (z) \overline{h_z}, \qquad \abs{\nu(z)} \le k <1\;.
\end{equation}
The \textit{$\nu$-Beltrami coefficient}
\begin{equation}\label{SBC}\nu (z)= {h_{\bar z}}(z)/{\overline{h_z (z)}}
\end{equation}
 tells us not only about quasiconformal features  but also how far is  $h$ from harmonic functions; it is harmonic exactly when  $\,\nu\,$ is antianalytic.
  One major advantage of the equations of type (\ref{nudist})  over (\ref{beltrami}) is that they are preserved upon conformal change of the $z$-variable in $\,\Omega\,$, while equations in (\ref{beltrami}) are not. The $\nu$-Beltrami coefficient will be used to describe geometric entities of minimal surfaces, such as the Gauss map, etc.

\subsubsection{Harmonic mappings}
The theory of minimal surfaces provides us with another classical example of useful generalization of conformal mappings.
These are {\it complex harmonic  functions} whose real and imaginary parts need not be coupled in the Cauchy-Riemann systems. Of special interest
to us will be orientation preserving harmonic homeomorphisms. Such mappings are $\mathscr C^\infty$-diffeomorphisms due to Lewy's
Theorem \cite[p. 20]{Dub}. For other (noninjective) harmonic functions the second Beltrami coefficient will still be defined as an antimeromorphic
function.

\subsection{The Gr\"{o}tzsch Distortion Problem (1928)}
We recall the Conformal Mapping Theorem: Every doubly connected domain $\,\Omega \subset \mathbb C\,$ can be mapped conformally onto a circular region $\,A(r,R) = \{ z\,;\; r <|z|<R\,\}$, where $\,0\leqslant r < R\leqslant \infty\,$. It will simplify the arguments, and cause insignificant loss of generality, if we restrict ourselves to doubly connected domains of finite conformal type; that is, when
 $\,0< r < R <\infty\,$. We call such $\,\Omega\,$ a \textit{ring domain}. The ring domains fall into conformal equivalence classes, according to their \textit{modulus}.
\begin{equation}
\textrm{Mod} \,\Omega  \;=\; \log \frac{R}{r}
\end{equation}
The famous Schottky theorem (1877) \cite{Sc}   asserts that an annulus
\[\A = A(r,R)= \{z \in \C \colon r < \abs{z} <R\}\]
can be mapped conformally onto the annulus
\[\A^\ast = A(r_\ast,R_\ast)= \{w \in \C \colon r_\ast < \abs{w} <R_\ast\}\]
if and only if $$\Mod \A := \log \frac{R}{r} = \log \frac{R_\ast}{r_\ast} =: \Mod \A^\ast\;;\qquad\;\textrm{that is,}\;\;
\frac{R}{r}= \frac{R_\ast}{r_\ast}$$
Moreover, modulo rotation, every conformal mapping $h \colon \A \overset{\textnormal{\tiny{onto}}}{\longrightarrow} \mathbb A^\ast$ takes the form
\[h(z) = \frac{r_\ast}{r} z \qquad \mbox{ or } \qquad h(z) = \frac{R\, r_\ast}{z}\]
Note that the latter mapping, though orientation preserving, reverses the order
of the boundary circles.
The mapping problem for doubly connected domains becomes more flexible if we admit quasiconformal deformations. However, there are still constraints on the domains.
\begin{theorem}[Gr\"otzsch (1928)] Let $h \colon \Omega \overset{\textnormal{\tiny{onto}}}{\longrightarrow} \Omega^\ast$ be a $K$-quasiconformal mapping between doubly connected domains. Then
\[\frac{1}{K} \Mod \Omega \le \Mod \Omega^\ast \le K \Mod \Omega\]
If $\Omega$ and $\Omega^\ast$ are circular annuli, say  $A(r,R)$ and $A(r_\ast,R_\ast)$,  then this inequality translates into
\begin{equation}\label{Grbd}
 \left(\frac{R}{r}\right)^{1/K}\le   \frac{R_\ast}{r_\ast} \le \left(\frac{R}{r}\right)^K
\end{equation}
Equalities are attained, uniquely modulo rotation,  for the multiples of the mappings $h(z)= \abs{z}^{\frac{1}{K}-1}z$ and $h(z)= \abs{z}^{{K}-1}z$, respectively.
\end{theorem}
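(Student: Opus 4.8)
The plan is to establish the modulus inequality via the method of extremal length (conformal modulus), which is the classical approach going back to Gr\"otzsch. First I would normalize: since $K$-quasiconformality is invariant under conformal changes of variable in both domain and target, it suffices to assume $\Omega = A(r,R)$ and $\Omega^\ast = A(r_\ast, R_\ast)$ are circular annuli, so that the two displayed inequalities are literally the same statement. By replacing $h$ with $1/h$ if necessary (which is conformal on the target and preserves the modulus, only swapping the two boundary components), I may assume $h$ maps the inner boundary $\abs{z}=r$ to the inner boundary $\abs{w}=r_\ast$.

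Next I would recall the two dual characterizations of $\Mod A(r,R)$: it equals the extremal length of the family $\Gamma$ of curves joining the two boundary components, and $2\pi/\Mod A(r,R)$ equals the extremal length of the family $\Gamma^\ast$ of closed curves separating them (equivalently, $\Mod$ is computed by the extremal length of the separating family up to the $2\pi$ normalization). The key step is the quasi-invariance of modulus under $K$-quasiconformal maps: if $\rho^\ast$ is an admissible metric for the image family $h(\Gamma)$ on $\Omega^\ast$, then pulling it back along $h$ and using the pointwise distortion inequality $\norm{Dh}^2 \le K J_h$ together with the change-of-variables formula for the area $\iint (\rho^\ast\circ h)^2 J_h \le \iint (\rho^\ast)^2$, one shows that a suitable multiple of $(\rho^\ast\circ h)\norm{Dh}$ is admissible for $\Gamma$ and its area on $\Omega$ is at most $K$ times the area of $\rho^\ast$ on $\Omega^\ast$. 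Taking infima gives $\Mod \Omega \le K \,\Mod \Omega^\ast$. Applying the same argument to $h^{-1}$ (which is also $K$-quasiconformal, a standard fact following from $K_{h^{-1}}\circ h = K_h$) yields $\Mod \Omega^\ast \le K\, \Mod \Omega$, and together these give the two-sided bound; exponentiating produces~\eqref{Grbd}.

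For the equality statement I would compute directly. For $h(z) = \abs{z}^{K-1}z$ on $A(r,R)$ one checks in polar coordinates $z = \varrho e^{i\theta}$ that $h(z) = \varrho^{K} e^{i\theta}$, so $h$ maps $A(r,R)$ onto $A(r^K, R^K)$ with $R_\ast/r_\ast = (R/r)^K$, and a short computation of $h_z, h_{\bar z}$ gives $\abs{h_z} = \frac{K+1}{2}\varrho^{K-1}$, $\abs{h_{\bar z}} = \frac{K-1}{2}\varrho^{K-1}$, hence $K_h \equiv K$ identically; this realizes the upper bound, and $h(z)=\abs{z}^{1/K-1}z$ realizes the lower bound symmetrically. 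For uniqueness, I would trace back through the extremal length argument: equality in the $K$-quasiconformal distortion estimate forces, for a.e.\ $z$, both that the pullback metric is proportional to the extremal (radial) metric and that $\norm{Dh}^2 = K J_h$ with the derivative aligned to the radial direction; integrating the resulting ODE for $h$ along rays and circles shows $h$ must be a rotation composed with $\abs{z}^{K-1}z$ (resp.\ its reciprocal).

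The main obstacle I expect is the uniqueness claim rather than the inequality: extracting from "equality in an integrated inequality" the pointwise rigidity and then integrating it up to identify $h$ explicitly requires care about regularity (one only has $\mathscr W^{1,2}_{\loc}$ a priori) and about the degenerate behavior near the boundary circles. The inequality itself is robust and essentially the textbook Gr\"otzsch argument; the delicate part is that the extremal metric on an annulus is unique up to scaling, so that equality pins down $h$ on almost every radial segment, and then absolute continuity on lines lets one conclude $h$ has the asserted form everywhere.
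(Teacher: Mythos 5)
The paper states this as a classical theorem of Gr\"otzsch (1928) and gives no proof of its own, so there is nothing internal to compare against; your proposal is the standard extremal-length proof and, modulo one directional slip, it is correct. With your setup (using the joining family $\Gamma$), the pullback estimate $\iint_\Omega(\rho^\ast\circ h)^2\|Dh\|^2\le K\iint_{\Omega^\ast}(\rho^\ast)^2$ gives $\operatorname{mod}(\Gamma)\le K\operatorname{mod}(h\Gamma)$ for the modulus of the curve family; since for $A(r,R)$ the modulus of the joining family equals $2\pi/\Mod A(r,R)$, this reads $\Mod\Omega^\ast\le K\Mod\Omega$, not $\Mod\Omega\le K\Mod\Omega^\ast$ as you wrote at that point. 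You would obtain your stated direction by running the same pullback on the \emph{separating} family, whose modulus is $\Mod/(2\pi)$; alternatively, as you then do, apply the argument to $h^{-1}$. Since you invoke both $h$ and $h^{-1}$ anyway, the two-sided inequality is obtained and the slip is a mislabeling, not a gap. Your computation of the extremal maps is correct: in polar coordinates $h(z)=|z|^{K-1}z=\varrho^K e^{i\theta}$ yields $|h_z|=\tfrac{K+1}{2}\varrho^{K-1}$, $|h_{\bar z}|=\tfrac{K-1}{2}\varrho^{K-1}$, hence $K_h\equiv K$, and $h(z)=|z|^{1/K-1}z$ handles the other extremal symmetrically. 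For uniqueness your outline is the right one: equality forces the pulled-back metric to coincide a.e.\ with the extremal radial metric and the pointwise distortion to saturate with stretching aligned radially; integrating along a.e.\ ray and using the ACL property of Sobolev homeomorphisms then pins down $h$ as the power map up to rotation and scaling. That last step does rest on the regularity facts you flag (absolute continuity on lines, a.e.\ differentiability of quasiconformal maps), which must be cited from the classical theory; with those in hand the plan is sound and complete.
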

The reader may wish to notice that these mappings fail to be harmonic, except for $K= 1$.
\begin{remark}
In general a homeomorphism $h \colon \Omega  \overset{\textnormal{\tiny{onto}}}{\longrightarrow} \Omega^\ast$ between doubly connected domains does not extend continuously to the closure of $\Omega$. Nevertheless it gives a one-to-one correspondence between boundary components in the sense of cluster sets.  The closed set $\partial \Omega \subset \C$ consists of two components called the inner boundary $\partial_I \Omega$ and outer boundary  $\partial_O \Omega$.
\end{remark}
\begin{definition}
We denote $\mathcal H (\Omega, \Omega^\ast )$ the class of orientation preserving homeomorphisms $h \colon \Omega  \overset{\textnormal{\tiny{onto}}}{\longrightarrow} \Omega^\ast$ which send $\partial_I\Omega$ onto $\partial_I \Omega^\ast$.
\end{definition}
It should be noted that every homeomorphism $h \colon \Omega  \overset{\textnormal{\tiny{onto}}}{\longrightarrow} \Omega^\ast$, upon suitable conformal reparametrization of $\Omega$, becomes a member of $\mathcal H (\Omega, \Omega^\ast)$. For this reason the restriction to mappings in $\mathcal H (\Omega, \Omega^\ast)$ will involve no loss of generality in our subsequent statements. One type of domains will be needed.
\begin{definition}
A {\it half circular annulus} is a doubly connected domain, denoted by $\mathcal A = \mathcal A(r, \cdot)$, whose inner boundary is the circle $\T_r =\{z \colon \abs{z}=r\}$.
\end{definition}

\subsection{The Nitsche Conjecture (1962)}
The study of doubly connected minimal surface (or minimal annuli) has been an active area for several decades, see~\cite{CM, CM2, Fa, MW} for recent
developments. The roots of this theory were founded, among others, by J.~C.~C. Nitsche who made several pivotal contributions in the 1960s. Considering
the existence of  doubly connected minimal graphs over a given annulus $\mathbb A^*$, he raised a question~\cite{Ni} of existence of a harmonic
homeomorphism between circular annuli $h \colon \A=A(r,R) \overset{\textnormal{\tiny{onto}}}{\longrightarrow} \A^\ast=A(r_\ast, R_\ast)$. He observed
that  the modulus of  $\A^\ast$ can be arbitrarily large but not arbitrarily small. Then he conjectured that harmonic homeomorphisms $h \colon \A \overset{\textnormal{\tiny{onto}}}{\longrightarrow} \A^\ast$ exist if an only if
\begin{equation}\label{nibd}
\frac{R_\ast}{r_\ast} \ge \frac{1}{2} \left(\frac{R}{r}+ \frac{r}{R}\right)
\end{equation}
The {\it Nitsche bound}~\eqref{nibd}, which appeared in~\cite{AIMb, Dub, Ka,  Lyz0, Ly, Nib, Sch, We},  was recently proved by the authors in~\cite{IKO2}. It turned out that~\eqref{nibd} also holds for harmonic homeomorphisms in the class $\mathcal H (\A, \mathcal A^\ast)$ where $\mathcal A^\ast=\mathcal A(r_\ast, \cdot) $ is a half circular annulus contained in $\A^\ast$. Moreover, the equality in~\eqref{nibd}   takes  place if and only if $\mathcal A^\ast = \A^\ast$ and (modulo rotation) $h(z)= \frac{1}{2} \left(\frac{z}{r} + \frac{r}{\bar z}\right)$. This so-called \textit{critical Nitsche mapping}  fails to be quasiconformal at the inner boundary. At this point it is worthwhile to mention that the same Nitsche bound is necessary and sufficient for the existence of a minimizer of the Dirichlet energy
\[
   \mathscr E[h] \;=\;\iint_\A  \abs{Dh(z)}^2\;\textrm{d}x\, \textrm{d}y\;=\;2\,\iint_\A \left( |h_z|^2 + |h_{\bar z}|^2\right)
\]
subject to all homeomorphisms $h \colon \A \overset{\textnormal{\tiny{onto}}}{\longrightarrow} \A^\ast$, see~\cite{AIM, IO}.

\subsection{Gr\"{o}tzsch meets Nitsche}
Our main result strengthens the estimates of Gr\"{o}tzsch and Nitsche for mappings which are both quasiconformal and harmonic. Here is the simplified version of it.

\begin{theorem}\label{main1}
Let $\,h \,: \mathbb A \overset{\textnormal{\tiny{onto}}}{\longrightarrow} \mathbb A^\ast\,$ be a $\,K$-quasiconformal harmonic homeomorphism. Then we have
\begin{equation}\label{Main1}
 \frac{R_\ast}{r_\ast} \ge  \frac{K+1}{2K}\;\frac{R}{r}\;+\; \frac{K-1}{2K}\;\frac{ r}{R}
 \end{equation}
Equality is attained if and only if, modulo rotation, $h$ takes the form
 \begin{equation}\label{Main2}
 h(z) = \frac{K+1}{2K}\frac{z}{r} + \frac{K-1}{2K} \frac{r}{\bar z}.
 \end{equation}
\end{theorem}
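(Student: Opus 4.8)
The plan is to push the estimate down to a one–dimensional Riccati inequality for the circular mean of $\log|h|$. First I would normalize: by the Remark, precomposing $h$ with a conformal automorphism of $\A$ (a rotation, or the inversion $z\mapsto rR/z$; both preserve harmonicity and $K$-quasiconformality) lets me assume $h\in\Ha(\A,\A^\ast)$, and a dilation of the $z$-variable lets me assume $r=1$, so $\A=A(1,R)$. A $K$-quasiconformal map between circular annuli extends to a homeomorphism of the closures, so $|h|=r_\ast$ on $\T$ and $|h|=R_\ast$ on $\{|z|=R\}$, uniformly in the angular variable; by Lewy's theorem $h$ is a diffeomorphism of $\A$ with $J_h=|h_z|^2-|h_{\bar z}|^2>0$, hence $h_z$ is holomorphic and nowhere zero, $h_{\bar z}$ is antiholomorphic, and $|h_{\bar z}|\le k|h_z|$ with $k=\tfrac{K-1}{K+1}$. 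I then set $L(t)=\tfrac1{2\pi}\int_0^{2\pi}\log|h(e^{t}e^{i\theta})|\,d\theta$ for $0\le t\le\log R$, so that $L(0)=\log r_\ast$ and $L(\log R)=\log R_\ast$. Abbreviating $P=zh_z/h$ and $Q=\bar z h_{\bar z}/h$ on $\{|z|=\rho\}$ (so $|Q|\le k|P|$), the identity $h_{z\bar z}=0$ gives $\Delta\log|h|=-4\re(h_zh_{\bar z}/h^2)$, and since $\rho^2\Delta=\partial_t^2+\partial_\theta^2$, averaging in $\theta$ yields
\[
L'(t)=\frac1{2\pi}\int_0^{2\pi}\re(P+Q)\,d\theta,\qquad L''(t)=-\frac2\pi\int_0^{2\pi}\re(PQ)\,d\theta,
\]
while, since $h$ winds once about the origin on each $\{|z|=\rho\}$ and $|h|$ is single-valued, $P-Q=-i\,\partial_\theta\log h$ gives $\tfrac1{2\pi}\int_0^{2\pi}\re(P-Q)\,d\theta=1$ and $\tfrac1{2\pi}\int_0^{2\pi}\im(P-Q)\,d\theta=0$ for every $\rho$.

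The core step is the inequality $L''\ge 1-(L')^2$ on $(0,\log R)$. Writing $a=\tfrac1{2\pi}\int\re P$ and $b=\tfrac1{2\pi}\int\re Q$, the winding identity gives $a-b=1$, and $L'=a+b$, so $1-(L')^2=-4ab$ and the inequality becomes the ``negative correlation'' estimate
\[
\frac1{2\pi}\int_0^{2\pi}\re(PQ)\,d\theta\;\le\;\Bigl(\frac1{2\pi}\int_0^{2\pi}\re P\,d\theta\Bigr)\Bigl(\frac1{2\pi}\int_0^{2\pi}\re Q\,d\theta\Bigr).
\]
I expect this to be the main obstacle. To prove it I would use the analytic structure ($zh_z$ holomorphic, $\bar z h_{\bar z}$ antiholomorphic): writing $Q=\nu\,e^{-2i\arg h}\,\overline{P}$ with $|\nu|<1$, one combines a Cauchy–Schwarz argument on the circle with the two mean identities above. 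Quasiconformality should not be needed here (consistent with the estimate degenerating to the Nitsche bound~\eqref{nibd} as $K\to\infty$).

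Quasiconformality enters only through the initial slope. On $\T$ write $h(e^{i\theta})=r_\ast e^{i\gamma(\theta)}$, with $\gamma'>0$ since $h$ is an orientation-preserving diffeomorphism up to the boundary. A short computation gives $\partial_\rho|h|=J_h/(r_\ast\gamma')$ on $\T$, and $\norm{Dh}^2\le KJ_h$ together with $\norm{Dh}\ge|\partial_\theta h|=r_\ast\gamma'$ forces $\partial_\rho|h|\ge r_\ast\gamma'/K$; averaging in $\theta$ and using $\tfrac1{2\pi}\int_0^{2\pi}\gamma'\,d\theta=1$ yields $L'(0^+)\ge 1/K$. (The delicate point here is the boundary regularity needed to identify the limits $L(0)=\log r_\ast$ and $L'(0^+)$, which I would address either by the boundary correspondence for quasiconformal maps or by a limiting argument on the interior circles $\{|z|=1+\varepsilon\}$.)

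With these two inputs, $v=L'$ is a supersolution of the Riccati equation $v'=1-v^2$ with $v(0^+)\ge 1/K$, hence $L'(t)\ge\tanh\!\bigl(t+\operatorname{arctanh}\tfrac1K\bigr)$; integrating over $[0,\log R]$ gives $\log\tfrac{R_\ast}{r_\ast}\ge\log\dfrac{\cosh(\log R+\operatorname{arctanh}\frac1K)}{\cosh(\operatorname{arctanh}\frac1K)}$, which on expanding the hyperbolic functions is exactly~\eqref{Main1}. For the equality assertion, equality throughout forces $L'(t)=\tanh\!\bigl(t+\operatorname{arctanh}\tfrac1K\bigr)$ for all $t$ and $L'(0^+)=1/K$; chasing equality in the correlation estimate makes $P$ and $Q$ constant on each circle $\{|z|=\rho\}$, so $h(\rho e^{i\theta})=G(\rho)e^{i\theta}$ with $\rho^2G''+\rho G'-G=0$, i.e.\ $G(\rho)=A\rho+B/\rho$; equality in the boundary estimate ($K_h=K$ on $\T$) forces $B/A=k$, and the normalization pins $A=\tfrac{K+1}{2K}$, $B=\tfrac{K-1}{2K}$, giving~\eqref{Main2}.
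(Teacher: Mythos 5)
Your reduction to the differential inequality $L''\ge 1-(L')^2$ for the circular log-mean $L(t)=\dashint_{\T_{e^t}}\log|h|$ is clean and would indeed integrate to~\eqref{Main1} as you describe, but the inequality itself is false, and this is not a technical gap that Cauchy--Schwarz will close. Consider $h(z)=z+\epsilon+\epsilon\bar z^{-2}$ for small $\epsilon>0$ on any annulus $A(1,R)$: it is harmonic, has $J_h=1-4\epsilon^2|z|^{-6}>0$, and winds once about $0$. With $P=zh_z/h$, $Q=\bar z h_{\bar z}/h$, a direct expansion on $|z|=\rho$ gives
\[
\dashint_{\T_\rho}\re P=1+\frac{2\epsilon^2}{\rho^4}+O(\epsilon^3),\quad
\dashint_{\T_\rho}\re Q=\frac{2\epsilon^2}{\rho^4}+O(\epsilon^3),\quad
\dashint_{\T_\rho}\re(PQ)=\frac{4\epsilon^2}{\rho^4}+O(\epsilon^3),
\]
so that $\dashint\re(PQ)-\big(\dashint\re P\big)\big(\dashint\re Q\big)=\frac{2\epsilon^2}{\rho^4}+O(\epsilon^3)>0$, equivalently $L''-(1-(L')^2)=-\frac{8\epsilon^2}{\rho^4}+O(\epsilon^3)<0$. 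Your ``negative correlation'' estimate thus fails on \emph{every} interior circle of the annulus for this $h$. More generally, for $h=z+\epsilon\bar\phi$ with $\phi=\sum c_nz^n$ one finds the sign of $\dashint\re(PQ)-\dashint\re P\,\dashint\re Q$ at order $\epsilon^2$ is governed by $\re\bigl(\sum_{p+q=-2}\bar c_p\bar c_q\bigr)-(\re c_{-1})^2$, which has no sign. Since the Riccati comparison requires the inequality pointwise in $t$ on the whole interval $(0,\log R)$, and the interior circles are not constrained to map to circles, this breaks the argument at its center. (A further warning sign: the inequality you posit, combined only with $L'(0^+)\ge 0$, would give the Nitsche bound~\eqref{nibd} in two lines; the authors' proof of the Nitsche conjecture in~\cite{IKO2} is considerably more involved than that.)

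Your route is also genuinely different from the paper's. The paper does not work with the log-mean $L$ or any closed first-order ODE for it; it studies the $L^2$-mean $U(\rho)=\dashint_{\T_\rho}|h|^2$, expands $h$ in a Laurent--Fourier series, and bounds the resulting quadratic form directly. Even that does not yield a single clean differential inequality: Proposition~\ref{proplesse} covers $R\le 1+\sqrt{3+3\lambda}$ by integrating the $U$-expression against a carefully chosen kernel and applying Wirtinger's inequality, while Proposition~\ref{bigrprop} covers $R\ge\sqrt 7$ by a term-by-term positivity analysis of the quadratic form, crucially assisted by the Jacobian--Energy inequality~\eqref{th3form} from~\cite{IKO2}, which has no counterpart in your outline. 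The boundary-derivative step you use ($L'(0^+)\ge 1/K$) does have an analogue in the paper, namely inequality~\eqref{texasrh}; that piece of your argument is sound. But the interior mechanism must be replaced.
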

\begin{remark}
The estimate (\ref{Main1}) readily implies both the Gr\"{o}tzsch estimate
\[\frac{R_\ast}{r_\ast} \ge \frac{K+1}{2K}\;\frac{R}{r}\;+ \;\frac{K-1}{2K}\;\frac{ r}{R}\; \ge \;\Big(\frac{R}{r}\Big)^\frac{K+1}{2K} \Big(\frac{r}{R}\Big)^\frac{K-1}{2K}= \Big(\frac{R}{r}\Big)^{1/K},\]
by Young's inequality, and the Nitsche bound
$$\frac{R_\ast}{r_\ast} \ge \frac{K+1}{2K}\;\frac{R}{r}\;+ \;\frac{K-1}{2K}\;\frac{ r}{R}\; \ge \frac{1}{2} \left(\frac{R}{r}+ \frac{r}{R}\right)$$
\end{remark}
We conjecture the following analogue of the upper Gr\"otzsch bound for $K$-quasiconformal harmonic mappings.

\begin{conjecture}
Let $\,h : \mathbb A \overset{\textnormal{\tiny{onto}}}{\longrightarrow} \mathbb A^\ast\,$ be a $\,K$-quasiconformal harmonic homeomorphism. Then we have
\begin{equation}
 \frac{R_\ast}{r_\ast}\; \le \; \frac{K+1}{2}\;\frac{R}{r}\;- \;\frac{K-1}{2}\;\frac{ r}{R}\;\;\;\;\;\;\;\;\;\;\qquad\Big\{ \leqslant \Big(\frac{R}{r}\Big)^{\!K}\,\Big\}
\end{equation}
Equality is attained uniquely, modulo conformal automorphisms of $\A$, for  $h(z) = \frac{K+1}{2}\frac{z}{r} - \frac{K-1}{2} \frac{r}{\bar z}$.
\end{conjecture}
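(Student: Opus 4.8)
The plan is to turn the inequality into an exact formula for the modulus ratio, solve the resulting extremal problem over the rotationally symmetric harmonic maps, and then argue that these exhaust the extremals — the last point being the real difficulty.

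\textbf{Setup and a modulus identity.} Normalize $r_\ast=1$ and, using Lewy's theorem and the cluster–set description of the boundary correspondence, treat $h$ as a diffeomorphism of $\A$ onto $\A^\ast$ in $\Ha(\A,\A^\ast)$. Put $\varphi:=h_z$ and $\omega:=\overline{h_{\bar z}}$; harmonicity makes both holomorphic on $\A$, injectivity forces $J_h>0$, so $\varphi$ is zero–free, $g:=\omega/\varphi$ is holomorphic with $\norm{g}_{L^\infty(\A)}\le k=\tfrac{K-1}{K+1}$, the Hopf differential $\phi:=h_z\overline{h_{\bar z}}=\varphi\omega$ is holomorphic, and $\mathrm{Res}\,\varphi=\overline{\mathrm{Res}\,\omega}$ (the period condition making $h$ single valued). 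Differentiating the circular average $L(t):=\tfrac1{2\pi}\int_0^{2\pi}\log\abs{h(e^{t+i\theta})}\,\textrm{d}\theta$ via $\partial_t\log\abs h=\re\tfrac{zh_z+\bar zh_{\bar z}}{h}$, and using that $\theta\mapsto h(e^{t+i\theta})$ has winding number $1$ about the origin (so that the circular integrals of $\tfrac{\bar zh_{\bar z}}{h}$ and of $\tfrac{zh_z-\bar zh_{\bar z}}{h}$ are computed), one gets $L'(t)=2\re P(t)-1$ with $P(t):=\tfrac1{2\pi i}\oint_{\abs z=e^t}\tfrac{h_z}{h}\,\textrm{d}z$; integrating over $\log r\le t\le\log R$ and using $L(\log r)=\log r_\ast$, $L(\log R)=\log R_\ast$ yields
\begin{equation}\label{myid}
\log\Big(\frac{R_\ast}{r_\ast}\cdot\frac{R}{r}\Big)\;=\;\frac1\pi\iint_{\A}\re\frac{h_z(z)}{\bar z\,h(z)}\;\textrm{d}x\,\textrm{d}y .
\end{equation}
Thus the conjecture is equivalent to the estimate $\ \tfrac1\pi\iint_{\A}\re\tfrac{h_z}{\bar z h}\,\textrm{d}x\,\textrm{d}y\le\log\tfrac{(K+1)(R/r)^2-(K-1)}{2}$.

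\textbf{The rotationally symmetric model.} The only harmonic maps sending concentric circles to concentric circles are $h(z)=Az+B/\bar z$; such a map is $K$–quasiconformal on $\A$ exactly when $\abs B\le k\abs Ar^2$, and then $\rho\mapsto\abs{A\rho+B/\rho}$ is increasing on $(r,R)$, so $h$ carries $\A(r,R)$ onto the annulus with radii $\abs{Ar+B/r}$ and $\abs{AR+B/R}$. For these maps $P(t)=e^{2t}/(e^{2t}+B/A)$, so \eqref{myid} reduces to an elementary integral; optimizing first over $\arg(B/A)$ (the optimum has $B/A$ real and negative) and then over $\abs B\in[0,k\abs Ar^2]$ one finds $R_\ast/r_\ast$ strictly increasing along that edge, hence maximal at $B=-k\abs Ar^2$, which after normalization is precisely $h(z)=\tfrac{K+1}{2}\tfrac zr-\tfrac{K-1}{2}\tfrac r{\bar z}$, giving the asserted value and the equality statement.

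\textbf{Reduction to the model, and the obstacle.} It remains to show no $K$–quasiconformal harmonic homeomorphism does better. I see two avenues. (i) \emph{Variational}: show the supremum of $R_\ast/r_\ast$ over all such $h$ with $\Mod\A$ fixed is attained — this needs a compactness step (the Gr\"otzsch bound excludes $R_\ast/r_\ast\to\infty$, and collapse of the homeomorphism is ruled out by normal–family bounds for the zero–free $\varphi$ and the bounded $g$) — and then analyze the Euler–Lagrange relation; since $\abs{h_{\bar z}}\le k\abs{h_z}$ is a unilateral constraint this is a variational inequality, and the aim is to prove that the constraint is active exactly on $\T_r$ and that the free part forces $\phi\,\textrm{d}z^2$ to be a scalar multiple of $\textrm{d}z^2/z^2$, i.e.\ $h$ rotationally symmetric, after which the model finishes the argument. (ii) \emph{Length–area with an adapted weight}: \eqref{myid} suggests testing, in place of the Gr\"otzsch metric $\abs{\textrm{d}w}/\abs w$ on $\A^\ast$, the metric pulled back from the model map, whose lift to $\A$ is $\tfrac{\abs z^2+kr^2}{\abs z(\abs z^2-kr^2)}\,\abs{\textrm{d}z}$, and coupling $\abs{\textrm{d}h}\le\norm{Dh}\,\abs{\textrm{d}z}$ and $\norm{Dh}^2\le KJ_h$ with the holomorphicity of $\phi$. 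In either avenue the obstacle is the same: converting the \emph{global} hypotheses (quasiconformality together with the homeomorphism/winding condition) into \emph{effective} control of the non–radial oscillation of $h$ on the circles $\T_\rho$. Subharmonicity of $\log\abs g$ gives only $\abs g\le k$ with equality confined to $\D\A$, which is not quantitative; the missing leverage must come from the holomorphic Hopf differential — controlling $\int_0^{2\pi}\abs{\phi(e^{t+i\theta})}\,\textrm{d}\theta$ and its interplay with the winding number — and here, unlike in Gr\"otzsch's theorem, the extremal map is not the radial stretch and its distortion equals $K$ only on $\T_r$, so $\norm{Dh}^2\le KJ_h$ is far from sharp in the interior and must be supplemented. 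Making this precise enough to pin the extremal down is presumably why the bound is stated here only as a conjecture.
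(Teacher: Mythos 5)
The statement you were given is stated in the paper only as a conjecture --- the authors provide no proof of it --- and your attempt does not close it; by your own admission the decisive step is missing. What you do establish is correct but partial: the identity $\log\bigl(\tfrac{R_\ast}{r_\ast}\cdot\tfrac{R}{r}\bigr)=\tfrac1\pi\iint_{\A}\re\tfrac{h_z}{\bar z\,h}\,\dtext x\,\dtext y$ (your derivation via $L'(t)=2\re P(t)-1$ and the winding-number normalization checks out), and the fact that within the rotationally symmetric family $h(z)=Az+B/\bar z$, constrained by $\abs{B}\le k\abs{A}r^2$ with $k=\tfrac{K-1}{K+1}$, the ratio $R_\ast/r_\ast$ is maximized exactly by the conjectured extremal $\tfrac{K+1}{2}\tfrac zr-\tfrac{K-1}{2}\tfrac r{\bar z}$. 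That is a verification of the candidate and of the equality value inside a one-parameter (after normalization) family; it proves neither the inequality for general $K$-quasiconformal harmonic homeomorphisms nor the uniqueness ``modulo conformal automorphisms of $\A$''.

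The gap is the entire reduction of a general map to the model, and both of your proposed avenues remain programs rather than arguments. In (i), no attainment of the supremum is actually proved: the class is closed under local uniform convergence ($\abs{h_{\bar z}}\le k\abs{h_z}$ and harmonicity pass to limits), but injectivity, the boundary correspondence in the sense of cluster sets, and the prescribed target radii can all degenerate, and the subsequent variational-inequality analysis that is supposed to force the Hopf differential $h_z\overline{h_{\bar z}}\,\dtext z^2$ to be a multiple of $\dtext z^2/z^2$ is nowhere carried out. In (ii), there is a structural problem you half-acknowledge: length--area and modulus-of-curve-family arguments of Gr\"otzsch type, as well as the circle-wise integral inequalities by which the paper proves the companion \emph{lower} bound (Theorem~\ref{main1} via Theorem~\ref{th34}), naturally bound $R_\ast/r_\ast$ from \emph{below}; for an upper bound the pointwise hypothesis $\norm{Dh}^2\le K\,J_h$ points the wrong way, the conjectured extremal saturates it only on the inner circle, and you supply no mechanism that converts the holomorphy of the Hopf differential plus the degree-one normalization into quantitative control of the non-radial oscillation of $h$ on the circles $\T_\rho$. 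Since this is exactly the difficulty the authors sidestep by stating the bound as a conjecture, your write-up should be read as a correct computation of the extremal candidate together with an honest outline of where a proof would have to go, not as a proof.
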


Our applications to minimal surfaces require a more general version of Theorem~\ref{main1} in which
quasiconformality and injectivity are imposed only on the inner boundary of the domain.
We write $\dashint$ for the integral average and
\[A[1,R]=\{z\in\C\colon 1\le \abs{z}\le R\},\quad \T_r=\{z\in\C\colon \abs{z}=r\},\quad \T=\T_1.\]
The following theorem is the principal result of our paper.

\begin{theorem}\label{th34}
Let a $\,\mathscr C^1$-mapping $\,h \,: A[1,R] \rightarrow \mathbb C\,$ be harmonic in $\,A(1,R)\,$ and satisfy the following  conditions at the inner boundary $\,\mathbb T$,
\begin{itemize}
\item $\,h :\,\mathbb T\leftrightarrows \mathbb T\,$ is an orientation preserving homeomorphism
\item  There is $\,1\leqslant K <\infty\,$ \,(the distortion of $\,h\,$  at $\,\mathbb T\,$) such that
\begin{equation}\label{311}
\norm{Dh(z)}^2 \;\leqslant\; K \,J(z,h) \;, \qquad \; \textrm{for all}\;\;\; z\in\mathbb T
\end{equation}
\end{itemize}
Then
\begin{equation}\label{312}
\sup_{|z|= R} |h(z)| \;\ge\;\;\left[ \dashint_{\T_{\!R}} \abs{h}^2  \right]^\frac{1}{2} \ge  \frac{K+1}{2K}\;R+ \frac{K-1}{2K}\;\frac{ 1}{R}
\end{equation}
Equality is attained, uniquely up to rotation, for
\[ h(z) = \frac{K+1}{2K}\;z+ \frac{K-1}{2K}\;\frac{ 1}{\bar z}\]
\end{theorem}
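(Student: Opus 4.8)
The plan is to expand $h$ as a complex harmonic function on the annulus $A(1,R)$ and use the two boundary conditions at $\T$ to pin down enough of its Fourier coefficients to force the stated lower bound on $\dashint_{\T_R}\abs{h}^2$. Every $\mathscr C^1$ map harmonic in $A(1,R)$ and continuous on $A[1,R]$ has a Laurent-type expansion
\[
h(z)=\sum_{n\in\Z} a_n z^n+\sum_{n\in\Z}b_n\bar z^{\,n}
\]
(convergent in the appropriate sense), so that $\dashint_{\T_\rho}\abs{h}^2=\sum_{n}\abs{a_n}^2\rho^{2n}+\sum_n\abs{b_n}^2\rho^{2n}$ up to cross terms that vanish on each circle. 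The first observation is that the boundary condition $h\colon\T\leftrightarrows\T$ together with orientation preservation severely restricts the degree-one part: writing $g(\theta)=h(e^{i\theta})$, this is an orientation-preserving self-homeomorphism of the unit circle, and expanding $\abs{g}^2\equiv 1$ in Fourier series forces relations among the $a_n,b_n$; the crucial ones are for the coefficient of $z$ (i.e. $e^{i\theta}$) and of $\bar z$ (i.e. $e^{-i\theta}$) in $h$. The role of the half-circular-annulus discussion and the class $\mathcal H$ earlier is exactly to normalise so that $h$ maps $\T$ to $\T$ preserving orientation, which is what makes this expansion clean.

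Next I would bring in the distortion inequality \eqref{311} on $\T$. On the unit circle $\norm{Dh}^2\le K J_h$ is equivalent to $\abs{h_{\bar z}}\le k\abs{h_z}$ with $k=\frac{K-1}{K+1}$, by the remark following \eqref{eq14}. Evaluating $h_z$ and $h_{\bar z}$ on $\T$ in terms of the expansion and combining with the circle-homeomorphism constraint should give a one-sided inequality linking the ``expanding'' coefficient (the one multiplying $z$, which grows like $R$ on $\T_R$) to the ``contracting'' coefficient (the one multiplying $\bar z$, i.e. $1/\bar z$, which decays like $1/R$ on $\T_R$). The model map $h(z)=\frac{K+1}{2K}z+\frac{K-1}{2K}\frac1{\bar z}$ has $h_z=\frac{K+1}{2K}$ and $h_{\bar z}=-\frac{K-1}{2K}\bar z^{-2}$, so $\abs{h_{\bar z}}/\abs{h_z}=k$ on $\T$ with equality, which tells me the extremal case saturates precisely this inequality and nothing else; all other Fourier modes must vanish at the extremum. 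I expect the clean way to package the argument is: among all $h$ with the prescribed boundary homeomorphism on $\T$ and with $\abs{h_{\bar z}}\le k\abs{h_z}$ there, the quantity $\dashint_{\T_R}\abs{h}^2$ is minimised by keeping only the modes $z$ and $1/\bar z$, and then minimising the resulting two-variable expression subject to the distortion constraint propagated from $\T$.

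The technical heart — and the step I expect to be the main obstacle — is turning the pointwise inequality $\abs{h_{\bar z}}\le k\abs{h_z}$ on $\T$ into a usable inequality on coefficients when $h$ is only harmonic (not analytic), so $h_z$ and $h_{\bar z}$ are each a sum of an analytic and an antianalytic piece. One cannot simply read off coefficients; instead I would integrate. The right tool is a Cauchy–Schwarz / orthogonality argument on $\T$: test the inequality against $e^{i\theta}$ and $e^{-i\theta}$, i.e. estimate $\bigl|\int_\T h_{\bar z}\,e^{\pm i\theta}\bigr|\le k\int_\T\abs{h_z}$ and relate both sides back to the relevant Laurent coefficients, using the boundary-homeomorphism identity to control $\int_\T\abs{h_z}$ from above by the degree-one data. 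Harmonicity guarantees the coefficients with $\rho^{2n}$, $n\ge 2$, only add to $\dashint_{\T_R}\abs{h}^2$, so discarding them is legitimate and is where the ``$\sup_{|z|=R}|h(z)|\ge[\dashint_{\T_R}|h|^2]^{1/2}$'' first inequality and the final bound both come from; equality throughout forces all higher modes to be zero, the distortion inequality to be an equality a.e. on $\T$, and the degree-one coefficients to equal $\frac{K+1}{2K}$ and $\frac{K-1}{2K}$, giving exactly the claimed extremal $h$ up to a rotation. I would close by checking the extremal map is genuinely harmonic on all of $A(1,R)$ and $\mathscr C^1$ up to $\T$, which is immediate.
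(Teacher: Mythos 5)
Your proposal captures the right intuition about the extremizer (degree-$\pm 1$ modes with $a_1=\frac{K+1}{2K}$, $b_1=\frac{K-1}{2K}$) and the Fourier-expansion flavour of part of the argument, but the key step you propose is exactly the one that fails, and the place where the paper does real work is elided. The claim that ``harmonicity guarantees the coefficients with $\rho^{2n}$, $n\ge 2$, only add to $\dashint_{\T_R}\abs{h}^2$, so discarding them is legitimate'' is not a valid reduction: discarding higher modes changes the function and in particular changes the constraints at the inner circle. The constraint is not merely $\dashint_\T\abs{h}^2=1$ but $\abs{h}\equiv 1$ pointwise on $\T$ together with the pointwise distortion bound \eqref{311}, and the higher modes are coupled to the degree-one mode through these. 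If $h$ carries energy at $n\geq 2$, the degree-one piece alone need not have modulus $1$ on $\T$, and the winding-number identity $\dashint_{\T}\im(\bar h h_\theta)=\sum_n n\abs{a_n+b_n}^2=1$ shows that higher modes can ``spend'' the topological degree and make the $n=1$ contribution \emph{smaller} than what your two-variable minimisation would yield. So the extremal problem does not decouple mode-by-mode for free; one must design a very specific combination of boundary functionals so that the associated quadratic form in $(a_n,b_n)$ becomes positive semidefinite term by term.

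That is essentially what the paper does, but only in the regime $R\ge\sqrt 7$ (Proposition~\ref{bigrprop}), and even there the argument requires a nontrivial external ingredient that your sketch does not contemplate: the Jacobian--Energy inequality $\int_\T J_f\ge\iint_{\DD}\abs{Df}^2$ (from~\cite[Theorem~1.7]{IKO2}) applied to the harmonic extension $f$ of $h|_\T$ into the disk, which is what supplies the missing positivity for the modes $n\le -1$ and $n\ge 2$. For small $R$ (namely $R\le 1+\sqrt 3$) the Fourier-coefficient quadratic form is \emph{not} positive, so a Fourier-only argument cannot work; the paper instead uses an entirely different device (Proposition~\ref{proplesse}): an integral identity over the annulus, in terms of the auxiliary function $g=h/h^\upsilon$, combined with Cauchy--Schwarz and Wirtinger on circles. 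Your proposal also does not identify the preliminary but essential reduction of the pointwise distortion inequality to the form $\abs{h}_\rho\ge\frac1K\abs{h_\theta}$ on $\T$ (obtained via $J_h=\abs{h_\theta}\abs{h}_\rho$ there), which is what makes the boundary constraint usable inside both propositions. In short: the ``main obstacle'' you flag is genuinely the obstacle, your proposed way around it (drop higher modes, integrate the distortion inequality against $e^{\pm i\theta}$) does not close, and the actual proof needs the winding-number identity, the Jacobian--Energy inequality, and a separate small-$R$ argument that is not of Fourier-coefficient type.
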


The smoothness assumptions on $h$ and $\Omega$ can be easily removed with an approximation argument~\cite[Lemma 2.2]{IKO2}.
Upon a conformal change of the independent variable $z$ we immediately obtain the following generalization of
Theorem~\ref{th34}.

\begin{corollary}\label{coromega}
Let $\,h \,: \Omega \rightarrow \mathbb C\,$ be a harmonic function in a doubly connected domain $\Omega\subset \mathbb C$ that is $K$-quasiconformal near the inner boundary $\partial_I \Omega$. Suppose also that   $\abs{h(z)} \searrow 1$ as $z \to \partial_I \Omega$.
Then
\[
\sup_{\Omega} \abs{h} \;\ge\;\;  \frac{1}{2K}\left[(K+1)\;e^{\,\Mod\,\Omega}+ (K-1)\;e^{-\Mod\,\Omega}\right]
\]
\end{corollary}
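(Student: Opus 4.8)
The plan is to reduce Corollary~\ref{coromega} to Theorem~\ref{th34} by a conformal change of variable, once we have made sense of the hypothesis $\abs{h(z)}\searrow 1$ as $z\to\partial_I\Omega$ in a way that produces the genuine boundary condition of Theorem~\ref{th34}. First I would invoke the Conformal Mapping Theorem quoted above: there is a conformal homeomorphism $\varphi\colon A(1,R)\to\Omega$ with $R=e^{\Mod\Omega}$, normalized so that $\varphi$ carries $\T=\T_1$ to the inner boundary component $\partial_I\Omega$ (in the cluster-set sense) and $\T_R$ to $\partial_O\Omega$. Set $g=h\circ\varphi\colon A(1,R)\to\C$. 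Since $\varphi$ is conformal, $g$ is harmonic in $A(1,R)$; since $h$ is $K$-quasiconformal in a neighborhood of $\partial_I\Omega$ and precomposition with a conformal map preserves both harmonicity and the pointwise distortion inequality $\norm{Dh}^2\le K\,J(\cdot,h)$, the composite $g$ is $K$-quasiconformal in a one-sided neighborhood $A(1,1+\varepsilon)$ of $\T$.

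Next I would convert the asymptotic condition $\abs{h}\searrow 1$ near $\partial_I\Omega$ into the hypotheses of Theorem~\ref{th34}. The statement $\abs{h(z)}\searrow 1$ as $z\to\partial_I\Omega$ means that $\abs{g}$ decreases to the constant value $1$ as $\abs{z}\to 1^+$; in particular $g$ extends continuously to $\T$ with $\abs{g}\equiv 1$ there, so $g$ maps $\T$ into $\T$. Because $g$ is a $K$-quasiconformal homeomorphism of a collar $A(1,1+\varepsilon)$ onto its image and the inner cluster set is exactly $\T$, this boundary map $g|_{\T}\colon\T\to\T$ is an orientation-preserving homeomorphism (a quasiconformal map of an annulus extends to a homeomorphism of each boundary circle; alternatively one appeals to the smoothness reduction mentioned after Theorem~\ref{th34} and to \cite[Lemma 2.2]{IKO2}). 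The distortion inequality \eqref{311} on $\T$ follows by continuity from the corresponding inequality in the collar. Thus $g$ — possibly after the routine smoothing — satisfies all the hypotheses of Theorem~\ref{th34} on $A[1,R]$.

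Applying Theorem~\ref{th34} to $g$ gives
\[
\sup_{A(1,R)}\abs{g}\;\ge\;\sup_{\abs{z}=R}\abs{g(z)}\;\ge\;\frac{K+1}{2K}\,R+\frac{K-1}{2K}\,\frac{1}{R}.
\]
Since $\varphi$ is a homeomorphism of $A(1,R)$ onto $\Omega$ we have $\sup_\Omega\abs{h}=\sup_{A(1,R)}\abs{g}$, and substituting $R=e^{\Mod\Omega}$ yields
\[
\sup_{\Omega}\abs{h}\;\ge\;\frac{1}{2K}\!\left[(K+1)\,e^{\Mod\Omega}+(K-1)\,e^{-\Mod\Omega}\right],
\]
which is the assertion. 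The one delicate point — the main obstacle — is the justification that the weak hypothesis $\abs{h}\searrow 1$ genuinely forces $g$ to extend to an orientation-preserving circle homeomorphism on $\T$ with the pointwise distortion bound there, rather than merely to a continuous map; this is where quasiconformality near the inner boundary is essential and where the smoothing argument of \cite[Lemma 2.2]{IKO2} (already cited in the paper) does the work. Everything else is the standard conformal-invariance bookkeeping for harmonic and quasiconformal mappings recorded in Section~1.
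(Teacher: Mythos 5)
Your argument is exactly the paper's: the authors obtain Corollary~\ref{coromega} from Theorem~\ref{th34} by precisely this conformal reparametrization of $\Omega$ by $A(1,R)$ with $R=e^{\Mod\Omega}$, invoking the approximation argument of \cite[Lemma 2.2]{IKO2} to dispense with smoothness and boundary regularity. Your additional care in checking that $\abs{h}\searrow 1$ together with quasiconformality near $\partial_I\Omega$ forces an orientation-preserving boundary homeomorphism onto $\T$ is a correct filling-in of what the paper leaves as ``immediate,'' so the proposal is correct and takes essentially the same route.
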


Corollary~\ref{coromega} can be viewed as a \emph{reverse Harnack inequality}, as it gives a sharp estimate of the ratio
\[\frac{\sup_{\Omega} \abs{h}}{\inf_{\Omega} \abs{h}}\]
from below.
It is an interesting question whether one can dispose of the assumption
that one of the boundary components is mapped homeomorphically onto $\T$. We formulate this as a conjecture.

\begin{conjecture}\label{greatconj}
Let $\Omega\subset\C$ be a doubly connected domain. Suppose
$h\colon\Omega\to \C_\circ=\C\setminus\{0\}$ is a harmonic mapping that is not homotopic to a constant
within the class of continuous mappings from $\Omega$ to $\C_\circ$. Then
\begin{equation}\label{great1}
\frac{\sup_{\Omega} \abs{h}}{\inf_{\Omega} \abs{h}}\ge \cosh \left(\frac{1}{2}\Mod\Omega\right)
\end{equation}
If $h$ is in addition injective, then the factor $1/2$ can be omitted.
\end{conjecture}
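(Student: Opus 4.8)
The plan is to transport the one--sided estimate of Theorem~\ref{th34} across $\Omega$ along a foliation by the level sets of $\abs{h}$. Since harmonicity is preserved under a conformal change of the independent variable, we may take $\Omega=A(1,R)$ with $R=e^{\Mod\Omega}$; replacing $h$ by $\bar h$ we may assume the winding number $d$ of $h$ about the origin satisfies $d\ge 1$; and, using the conformal involution $z\mapsto R/z$ of $A(1,R)$ (which swaps the two boundary components and preserves harmonicity), we may assume $\inf_{\Omega}\abs{h}$ is approached along the inner boundary $\T$. Put $m=\inf_{\Omega}\abs{h}$ and $M=\sup_{\Omega}\abs{h}$ and rescale so that $m=1$. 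In the injective case $\abs{h}$ cannot attain its infimum at an interior point (invariance of domain), so this normalization is legitimate; note also that $\abs{h}^2$ is subharmonic, so $M$ is approached on $\partial\Omega$.

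Now fix a regular value $t\in(1,M)$ of $\abs{h}$. Since $h$ is not homotopic to a constant in $\C_\circ$, every core curve of $\Omega$ has image of nonzero winding about $0$, and this forces the level set $\{\abs{h}=t\}$ to contain a component $\gamma_t$ separating $\T$ from $\T_R$; choose $\gamma_t$ innermost. Because $t$ is a regular value and $h$ is real analytic, $\gamma_t$ is a real--analytic Jordan curve with $h(\gamma_t)\subset\T_t$, and the induced circle map $\gamma_t\to\T_t$ has degree $d$; when $h$ is injective this map is an orientation--preserving homeomorphism, a proper injective continuous image of a circle inside a circle being the whole circle. Let $G_t$ be the ring domain lying between $\gamma_t$ and $\T_R$ and let $\psi\colon A(1,R_t)\to G_t$ be a conformal homeomorphism taking $\T$ onto $\gamma_t$. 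By superadditivity of the conformal modulus under subdivision by a core curve, $\log R_t=\Mod G_t\le\Mod\Omega=\log R$, and $R_t\to R$ as $t\downarrow 1$ because $\gamma_t$ then collapses onto $\T$. The mapping $g(\zeta)=t^{-1}h(\psi(t\zeta))$ is harmonic in $A(1,R_t)$, extends $\mathscr C^1$ to $\T$, maps $\T$ onto $\T$ by an orientation--preserving homeomorphism, and has positive Jacobian on $\T$ since $h$ is a diffeomorphism of $\Omega$ onto its image (Lewy); hence $g$ is $K$--quasiconformal at $\T$ for some finite $K$. Applying Theorem~\ref{th34} to $g$ on $A[1,R']$ and letting $R'\uparrow R_t$ gives $M/t=\sup_{A(1,R_t)}\abs{g}\ge\frac{K+1}{2K}R_t+\frac{K-1}{2K}R_t^{-1}\ge\frac12\bigl(R_t+R_t^{-1}\bigr)$; letting $t\downarrow 1$ and undoing the scaling yields $M/m\ge\frac12(R+R^{-1})=\cosh(\Mod\Omega)$. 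This settles the injective assertion of the conjecture.

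In the general case the foliation is the same, but now $g|_{\T}\colon\T\to\T$ is only a map of degree $d$, possibly non-monotone, so Theorem~\ref{th34} cannot be invoked. What is required is a \emph{folding} refinement of it: if $g$ is harmonic in $A(1,\sigma)$ and $g|_{\T}\colon\T\to\T$ is a degree--one map that is permitted to fold back on itself, then $\sup_{A(1,\sigma)}\abs{g}\ge\cosh\bigl(\tfrac12\log\sigma\bigr)$, the factor $\tfrac12$ being precisely the loss caused by folding; the extremal should be modelled on the critical Nitsche mapping $w\mapsto\tfrac12(w+1/\bar w)$ with boundary values that advance and then partly retract. I expect this folding estimate to be the crux of the conjecture: the proof of Theorem~\ref{th34} presumably uses the monotonicity of the boundary homeomorphism in an essential way (for instance to integrate a nonnegative quantity once around $\T$), so replacing it by a non-monotone degree--one boundary map demands a genuinely different argument --- perhaps splitting $g|_{\T}$ into its forward and backward arcs and estimating their contributions separately, or a rearrangement of the boundary data in the target. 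A secondary, more technical difficulty occurs in the foliation step when $h$ is non-injective and $\inf\abs{h}$ is approached at interior points: the sets $\{\abs{h}=t\}$ with $t$ near the infimum need not form a tame family of separating curves, and one must still extract separating curves $\gamma_t$ at a cofinal family of levels with $\Mod G_t\to\Mod\Omega$.
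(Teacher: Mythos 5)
The statement you were asked to prove is stated as a \emph{conjecture} in the paper and is not proved there; the authors only remark that the mapping $h(z)=z+\bar z^{-1}$ shows sharpness, and the conjecture appears to be open. So there is no argument in the paper to compare against, and any complete proof you produced would be a new result, not a reproduction of the paper's.

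As for your attempt: you are upfront that the non-injective half (with the factor $\tfrac12$) relies on an unproven ``folding'' refinement of Theorem~\ref{th34}, so you have not settled that case, and nothing in the paper supplies the missing lemma. But the injective half, which you claim to close, also has a genuine gap in the foliation step. You assert that for every regular value $t$ near $\inf_{\Omega}\abs{h}$ the level set $\{\abs{h}=t\}$ contains a Jordan curve $\gamma_t$ separating $\T$ from $\T_R$, and that $R_t\to R$ as $t\downarrow1$ because ``$\gamma_t$ collapses onto $\T$''. Both assertions tacitly require the cluster set of $\abs{h}$ at $\partial_I\Omega$ to be the singleton $\{\inf_{\Omega}\abs{h}\}$ --- precisely the hypothesis $\abs{h(z)}\searrow 1$ that appears in Corollary~\ref{coromega} and that the conjecture deliberately drops. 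If that cluster set is a nondegenerate interval $[1,c]$, as already happens for $h(z)=2z+1$ on $A(1,R)$ (after rescaling so $\inf\abs{h}=1$), then for $1<t<c$ the set $\{\abs{h}=t\}$ is merely a short arc near the inner boundary (in that example an arc of the circle $\abs{z+\tfrac12}=\tfrac t2$ near $z=-1$) and contains \emph{no} separating component at all; the first separating level curve appears only at $t\ge c$, and as $t\downarrow c$ the modulus $\log R_t$ of the ring it cuts off against $\T_R$ stays strictly below $\Mod\Omega$. Your chain of inequalities then only yields $M\ge c\cdot\tfrac12\bigl(R_c+R_c^{-1}\bigr)$ with $c>1$ and $R_c<R$, and it is not clear why this should dominate $\cosh(\Mod\Omega)$ in general.

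What you did verify cleanly is the transport mechanism: harmonicity survives the conformal reparametrization, the constant $K$ disappears via $\frac{K+1}{2K}R+\frac{K-1}{2K}R^{-1}\ge\tfrac12(R+R^{-1})$, and monotonicity of the modulus gives $R_t\le R$. The missing ingredient is a way to produce separating curves whose outer ring has modulus tending to $\Mod\Omega$ \emph{without} assuming uniform decay of $\abs{h}$ at the inner boundary. That is the genuine content of even the injective part of the conjecture, and as written your proposal does not supply it; both halves of the statement remain open.
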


The mapping $h(z)=z+\bar z^{-1}$ attains equality in~\eqref{great1} with $\Omega=A(1/R,R)$. It also provides an
injective example  when restricted to the annulus $A(1,R)$.

\section{Minimal Surfaces}

For the most part of this article the terminology is standard, but it seems worthwhile to recall and modify the required notions.

We are dealing with  $\,\mathscr C^1$-mappings $\,F = (u,v,w) :\Omega \overset{\textnormal{\tiny{into}}}{\longrightarrow}\mathbb R^3\,$  defined in  a domain $\,\Omega\,$   of the complex plane
$\,\mathbb C = \{ z= x+iy\,,\;\; x , y \,\in \mathbb R\,\}$ and valued in the 3-space $\,\mathbb R^3\,$. Let us factor the target space  into the complex plane and the real line $\, \mathbb R^3 \simeq \mathbb C\times \mathbb R = \{(\xi, w)\;;\; \xi \in \mathbb C\,,\; w\in\mathbb R\}$. Thus $F = (h, w) \colon \Omega \rightarrow  \mathbb C\times \mathbb R $,  where $ h =  u+ iv  $ will be referred to as  \textit{complex coordinate} and $\,w\,$  as \textit{real coordinate} of $\,F\,$.
The Riemann sphere $\, \widehat{\mathbb C} = \,\mathbb C \cup \{\infty\}$ is the inverse  image of  $\mathbb S^2 = \{(\xi, w)\;;\; |\xi|^2 + w^2 = 1\} \subset \mathbb R^3 $ under the stereographic map

\[
\mathcal S \colon \widehat{\mathbb C} \overset{\textnormal{\tiny{onto}}}{\longrightarrow} \mathbb S^2\;,\;\;\;\;\; \mathcal S(z)  = \left( \frac{2z}{1+|z|^2}\;,\frac{|z|^2 -1}{1+|z|^2}\right) \;\in \mathbb C\times \mathbb R
\]

\subsection{Parametric Surfaces}

We define an oriented parametric surface $\, \Sigma \,$ in $\,\mathbb R^3\,$ to be an equivalence class of mappings  $\,F = (u, v, w) : \Omega \rightarrow \mathbb R^3\,$ of some domain $\,\Omega \subset \mathbb R^2\,$ into $\,\mathbb R^3\,$, where the coordinate functions $\, u =u(x,y), \,v = v(x,y) \; \textrm{and} \; w = w(x,y)\,$ are of class at least $\,\mathscr C^1(\Omega)\,$. Two such mappings $\,F = (u, v, w) : \Omega \rightarrow \mathbb R^3\,$ and $\,\tilde{F} = (\tilde{u}, \tilde{v}, \tilde{w}) : \tilde{\Omega} \rightarrow \mathbb R^3\,$, referred to as  parametrizations of the surface, are said to be equivalent if there is a $\,\mathscr C^1$-diffeomorphism $\,\phi :\tilde{\Omega}\overset{\textnormal{\tiny{onto}}}{\longrightarrow} \Omega \,$ of positive Jacobian determinant  such that $\; \tilde{F} = F\circ \phi\,$.  Let us call such $\,\phi\,$ a \textit{change of variables, or reparametrization} of the surface. Furthermore, we assume that the critical points of  $\,\Sigma\,$  are isolated. These are the points  $\,(x,y) \in \Omega\,$  at which  the tangent vectors $\,F_x = \frac{\partial F}{\partial x},\; F_y = \frac{\partial F }{\partial y}\,$ are linearly dependent. Equivalently, at the critical points the \textit{Jacobian matrix}

\begin{displaymath}
\mathfrak DF(x,y)  = \left[\begin{array}{ccc} u_x&v_x&w_x \\u_y&v_y&w_y\end{array}\right]
\end{displaymath}
has rank at most 1. It has full rank  2 at the \textit{regular points}.
   Various geometric entities associated with a given surface will be introduced with the aid of a parametrization, but in fact they will be invariant under reparametrization. By Implicit Function Theorem, all parametrizations  of the surface are locally injective near regular points, though self intersections in $\,F(\Omega)\,$ may occur.
A surface with no critical points is called an \textit{immersion}.
The cross product of $\,F_x\,$ and $\,F_y\,$ at a regular point represents nonzero normal vector to the surface:
  \begin{displaymath}
   F_x \times F_y  =  \left(\left|\begin{array}{cc} v_x & w_x \\ v_y & w_y \end{array}\right|\,,\; - \left|\begin{array}{cc} u_x & w_x \\ u_y & w_y \end{array}\right|\,,\;\left|\begin{array}{cc} u_x & v_x \\ u_y & v_y \end{array}\right|\right)\;\neq \;0
  \end{displaymath}
 The area of the surface equals
\[
 \left| F(\Omega)\,\right|  \, =\, \iint_\Omega \left| F_x\times F_y\right|\; \textrm{d}x\, \textrm{d}y
\]
The central geometric entity is the \textit{normal vector field}, also known as the \textit{Gauss map}:
\begin{equation}
 N :\Omega \rightarrow \mathbb S^2 \subset \mathbb R^3\,, \;\;\;\;\; N(x,y) = \frac{F_x\times F_y}{|F_x\times F_y| }
\end{equation}
This map defines \textit{spherical image} $\,N(\Omega)\subset \mathbb S^2\,$ of the surface, a subset of the unit sphere.

\subsection{Isothermal parameters}

 In what follows, we will concern ourselves mostly with conformal parametrizations  $\,F = (u, v, w) : \Omega \rightarrow \mathbb R^3\,$. This simply means that the coordinate functions,  called \textit{isothermal parameters}, will satisfy the conformality relations:

 \begin{equation}\nonumber
 \;\begin{split}
  \;\begin{cases} u_xu_y  +v_xv_y + w_x w_y  =  0  \,, \quad\quad\quad\quad\; \;\;\;\,(\,F_x \;\textrm{and} \; F_y \; \textrm{are orthogonal in} \;\;\mathbb R^3\,)\\
  u_x^2  + v_x ^2 + w_x^2  =  u_y^2 + v_y^2 +w_y^2 \;\,   \,\;\quad\quad\; \; (\,F_x \;\textrm {and} \;F_y\;\;\textrm{have egual length\;)}
 \end{cases}\end{split}
 \end{equation}
 Equivalently, it means that:
\begin{equation}
 |F_x\times F_y | \;= \; |F_x|\cdot|F_y|\; = \;|F_x|^2\;  = |F_y|^2 \;\;
 \end{equation}
Thus $\,F\,$ is an  immersion if $\,\mathfrak DF \,\neq 0$ at every point. We refer to~\cite{DHKW} for an excellent historical account of existence of isothermal coordinates.
When dealing with conformal mappings we should take advantage of the complex variables. The conformality relations reduce to one complex equation
\[
 u_z^2  + v_z ^2 + w_z^2  = 0\,,\;\;\; \textrm{where}\;\;  (u_z, v_z, w_z) = \frac{\partial}{\partial z} F \,=  F_z  \in \mathbb C^3
\]
 Recall the notation $\, \mathbb R^3 \simeq \mathbb C\times \mathbb R$ and  $F = (h, w) \colon \Omega \rightarrow  \mathbb C\times \mathbb R $,  where $ h =  u+ iv  $ is a complex coordinate of $\,F\,$.
 A simple direct computation shows that $u_z^2  +  v_z ^2  \, = (u_z + i v_z ) ( u_z - i v_z) = \, h_z \,\overline{h_{\bar z}}$. Hence the conformality relations simplify even further to $\, h_z \,\overline{h_{\bar z}}\;+\,w_z^2 \;= 0$. We shall try to express surfaces in terms of their complex isothermal coordinate $\,h :\Omega \rightarrow  \mathbb C\,$ without appealing to its real coordinate $\,w\,$. This is possible in view of the following proposition.
\begin{proposition}
Let $\,h : \Omega \rightarrow \mathbb C\,$ be the complex coordinate of the isothermal representation $F = (h, w) \colon \Omega \rightarrow  \mathbb C\times \mathbb R $  of a surface. Then
\begin{itemize}
\item the function $ \, h_z \overline{h_{\bar z}}\;$ admits a continuous branch of square root in $\,\Omega\,$.
\item  for each smooth closed curve $\,\Gamma\subset \Omega\,$ we have
\begin{equation}\label{Zero}     \im \int_\Gamma \sqrt{h_z \overline{h_{\bar z}}} \;\textrm d z \; =\; 0
\end{equation}
\item the real isothermal coordinate is given by
\begin{equation} \label{for w}
     w = 2\, \im \int_{z_\circ}^ z \sqrt{h_z \overline{h_{\bar z}}} \;\textrm d z
\end{equation}
where the line integral runs along any smooth curve $\,\gamma\subset \Omega\,$ beginning  at a given point $\,z_{\circ} \in \Omega\,$ and terminating at $\,z\,$.\\
\item The normal vector field at a regular point of a  surface is
\begin{equation}
  N(z) = \frac{F_x\times F_y}{|F_x\times F_y| }\;=\; \left(\frac{2i\, \sqrt{h_z h_{\bar z}} }{|h_z| +|h_{\bar z}|}\;,\; \frac{|h_z|  - |h_{\bar z}|}{|h_z| +|h_{\bar z}|} \right) \;\in \mathbb S^2 \subset \mathbb C\times \mathbb R
 \end{equation}
\end{itemize}
\end{proposition}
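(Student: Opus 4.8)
The plan is to derive all four assertions from the single complex conformality relation $h_z\overline{h_{\bar z}}+w_z^2=0$ obtained just above the statement, together with the two elementary facts that the real coordinate $w$ is real-valued and of class $\mathscr C^1$. The first assertion is then immediate: since $w\in\mathscr C^1(\Omega,\R)$ the function $\varphi:=i\,w_z$ is continuous on $\Omega$, and the conformality relation gives $\varphi^2=-w_z^2=h_z\overline{h_{\bar z}}$, so $\varphi$ is a continuous \emph{single-valued} branch of $\sqrt{h_z\overline{h_{\bar z}}}$ throughout $\Omega$. We fix this branch for the rest of the argument, so that $\sqrt{h_z\overline{h_{\bar z}}}=i\,w_z$ everywhere.

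For \eqref{Zero} and \eqref{for w} the key point is that $w$ is real, which forces $w_{\bar z}=\overline{w_z}$ and hence the pointwise identity of continuous $1$-forms
\[
\dtext w \;=\; w_z\,\dtext z + \overline{w_z}\;\dtext{\bar z} \;=\; 2\,\re\bigl(w_z\,\dtext z\bigr)\;=\;2\,\im\bigl(\sqrt{h_z\overline{h_{\bar z}}}\;\dtext z\bigr),
\]
where the last step uses $\re\zeta=\im(i\zeta)$ and the branch fixed above. Integrating this identity along a smooth closed curve $\Gamma\subset\Omega$ and recalling that $w$ is single-valued gives $0=\int_\Gamma\dtext w=2\,\im\int_\Gamma\sqrt{h_z\overline{h_{\bar z}}}\,\dtext z$, which is \eqref{Zero}; in particular the imaginary part of the line integral joining $z_\circ$ to $z$ is path-independent. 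Integrating the same identity along such a curve $\gamma$ gives $w(z)-w(z_\circ)=2\,\im\int_{z_\circ}^z\sqrt{h_z\overline{h_{\bar z}}}\,\dtext z$, and since $w$ is determined by $h$ only up to an additive constant we may normalize $w(z_\circ)=0$ to obtain \eqref{for w}.

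Finally I would compute $F_x\times F_y$ at a regular point directly. No regular point can have $h_z=h_{\bar z}=0$, for then $h_x=h_y=0$, i.e.\ $u_x=v_x=u_y=v_y=0$, and $F_x,F_y$ would both be vertical and linearly dependent; thus $|h_z|+|h_{\bar z}|>0$ there and the denominators below make sense. Writing the first two coordinates of $F_x\times F_y$ as one complex number $a+ib$, expanding $h_x,h_y,w_x,w_y$ through $h_z,h_{\bar z},w_z,\overline{w_z}$, and using $w_{\bar z}=\overline{w_z}$ gives
\[
a+ib\;=\;2\bigl(w_z\,h_{\bar z}-\overline{w_z}\,h_z\bigr),\qquad c\;=\;J_h\;=\;|h_z|^2-|h_{\bar z}|^2 .
\]
Substituting $w_z=-i\sqrt{h_z\overline{h_{\bar z}}}$ and using the branch identities $\sqrt{h_z\overline{h_{\bar z}}}\,h_{\bar z}=|h_{\bar z}|\sqrt{h_zh_{\bar z}}$ and $\overline{\sqrt{h_z\overline{h_{\bar z}}}}\,h_z=|h_z|\sqrt{h_zh_{\bar z}}$ (with the \emph{same} branch of $\sqrt{h_zh_{\bar z}}$ in both, which is legitimate since the two products have equal argument) collapses $a+ib$ to $\pm\,2i\,(|h_z|+|h_{\bar z}|)\sqrt{h_zh_{\bar z}}$, while a short computation gives $|a+ib|^2+c^2=4|h_z||h_{\bar z}|(|h_z|+|h_{\bar z}|)^2+(|h_z|^2-|h_{\bar z}|^2)^2=(|h_z|+|h_{\bar z}|)^4$, hence $|F_x\times F_y|=(|h_z|+|h_{\bar z}|)^2$. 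Dividing the three coordinates of $F_x\times F_y$ by this factor yields the stated $N$. The only genuinely delicate point in the whole proof is this last step: one must keep the branch of $\sqrt{h_zh_{\bar z}}$, the branch $\sqrt{h_z\overline{h_{\bar z}}}=i\,w_z$, and the orientation of the surface mutually compatible, after which the sign ambiguity is removed and $N$ comes out exactly as written.
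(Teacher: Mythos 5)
Your proof is correct and follows essentially the same route as the paper: everything is pulled out of the conformality relation $h_z\overline{h_{\bar z}}+w_z^2=0$, the branch of the square root is identified with $\pm i\,w_z$, and the vanishing of~\eqref{Zero} (equivalently, the well-definedness in~\eqref{for w}) reduces to the single-valuedness of $w$ via $\dtext w = 2\re(w_z\,\dtext z)$. The paper states the formula for $N$ without computation and proves only~\eqref{Zero}; you supply the explicit cross-product computation and the branch bookkeeping, but the underlying argument is the same.
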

\begin{proof}
The latter formula is a matter of a simple direct computation. We reserve the following notation for  the normal vector $\,N(z) =  ( \xi(z),\,\tau(z))\,$, where the complex component $\,\xi :\Omega \rightarrow \mathbb C\,$ and the real component $\,\tau :\Omega \rightarrow \mathbb R\,$ satisfy $\,|\xi|^2 + \tau^2 = 1\,$.
The only not obvious fact is that for each smooth closed curve $\,\Gamma\subset \Omega\,$
\begin{equation}\nonumber
   \begin{split}  - 2\,\im \int_\Gamma \sqrt{h_z \overline{h_{\bar z}}} \;\textrm d z \; = &\;2\,\re \int_\Gamma  w_z \,\textrm{d}z =
\int_\Gamma  \,( w_z \,\textrm{d}z \, + \,\overline{w_z \,\textrm{d}z}\;) = \\ &\int_\Gamma  \,( w_z \,\textrm{d}z \, + \,w_{\bar{z}}\, \textrm{d}\bar{z}) = \int_\Gamma \textrm{d} w \;= \;0\,.
 \end{split}
   \end{equation}
 \end{proof}

 \begin{remark}

If $\,h \,$ is harmonic in a doubly connected domain $\,\Omega\,$  then the integrand $\,\sqrt{h_z \overline{h_{\bar z}}}\,$ is holomorphic. In this case one needs to verify (\ref{Zero})  only for one closed curve homologous to the boundary components.
\end{remark}

\subsection{Doubly Connected Surfaces}
We adapt to our use the following definition:

\begin{definition}
 An open doubly-connected surface  $\,\Sigma\,$ in $\,\mathbb R^3\,$ is a conformal immersion $\,F = (u, v, w) : \Omega \rightarrow \mathbb R^3\,$
 in which $\,\Omega\,$ is either:
\begin{itemize}

\item{punctured complex plane  $\mathbb C_\circ  =  \{ z \in \mathbb C ;\; z \neq 0\}$}
\item {punctured disk $\mathbb D_\circ = \{ z \in\mathbb C ;\; 0< |z| < 1\}$}
\item {or an annulus  $\mathbb A = A(r,R) =   \{z;\; r<|z| < R \}\;$ ,  $\;\;\;0<r <R< \infty$ }

\end{itemize}
\end{definition}

In this latter case, referred to as of finite conformal type, we define the \textit{conformal modulus} of  $\,\Sigma\,$ (or briefly, \textit{modulus}) by seting
\begin{equation}
    \textrm {Mod}  \mathbb \;\Sigma  =  \log \frac{R}{r}   > 0
\end{equation}
The classical theorem of  Schottky~\cite{Sc} tells us that the ratio $ \frac{R}{r} $ is independent of the conformal parametrization.  Even more, the images of the radial segments of the annulus  and the images of the concentric circles  are independent of the isothermal parametrization.

 \subsection{Graphs} The term {\it graph} over a domain $\,\Omega^\ast \subset \mathbb C\,$ refers to the parametric surface of the form: $\,F =(h, w) \,:\;\Omega \rightarrow \mathbb C\times \mathbb R\,$ in which $\,h \colon \Omega  \overset{\textnormal{\tiny{onto}}}{\longrightarrow} \Omega^\ast $ is a $\,\mathscr C^1$-diffeomorphism and  $\,w(z)  =  \Phi (h(z))\,$,  where $\,\Phi = \Phi(\xi)\,$ is a real-valued function in $\,\xi\in \Omega^\ast\,$. Graphs are always regular surfaces; the normal vector field has nonvanishing real coordinate.

\subsection{Minimal Surfaces}
The study of multiply connected minimal surfaces has a long history~\cite{Ba,DHKW,Kau,Ni1,Ni3,OsSc}.
A parametric surface is {\it minimal} if and only if the isothermal parameters are harmonic or, equivalently, the complex vector field $ F_z \colon \Omega \rightarrow  \mathbb C^3 $ is holomorphic (Enneper-Weierstrass representation). Thus $F = (h, w) \colon \Omega \rightarrow  \mathbb C\times \mathbb R $  is represented by a complex harmonic map  $ h =  u+ iv \colon \Omega \rightarrow  \mathbb C $ and a real harmonic function $\, w \colon \Omega \rightarrow  \mathbb R $. In addition these functions are coupled by the conformality relation:

 \begin{equation}
  h_z\, \overline{h_{\bar z}} \;+\; w_z^2\;\equiv 0\;\qquad \; h_{z\bar z} = w_{z\bar z} \;\equiv 0
 \end{equation}

Note that any conformal change of the $\,z$-variable ( analytic bijective map) leads to equivalent minimal surface  in different isothermal parameters.
All zeros of the holomorphic function $ \, h_z \overline{h_{\bar z}}\;$ have even order. The real isothermal parameter is determined, at least locally, in terms of $\,h$ as
\begin{equation}\label{w}
   w  =  2 \,\im \int \sqrt{\, h_z \,\overline{h_{\bar z}}\;} \;\textrm{d} z
\end{equation}

\begin{remark}\label{remark1}
It is evident that every complex harmonic homeomorphism $\,h :\Omega \rightarrow \mathbb C\,$  can be lifted,  locally near every zero of even order of $ \, h_z \overline{h_{\bar z}}\;$, to isothermal parameters of a minimal surface. The surface has  $ w \equiv \const $  if and only if $h$ is holomorphic or antiholomorphic. The global lifting exists provided the imaginary part of the integral in (\ref{w}) is single valued. This is a question which we often encounter when dealing with doubly connected minimal surfaces. Perhaps the best examples of this are catenoid and helicoid over an annulus, see Section~\ref{sec28}.
\end{remark}
\subsection{Area}
The area formula for a parametric surface in isothermal coordinates reduces to the Dirichlet energy of $\,F\,$,
\[
 \left| F(\Omega)\,\right|  \, =\, \frac{1}{2}\,\iint_\Omega \abs{D F}^2\;  = \, \frac{1}{2}\,\iint_\Omega \left( |F_x|^2  + |F_y|^2 \right)\; \textrm{d}x\, \textrm{d}y
\]
If the surface is minimal, we find that
\begin{equation}\nonumber
\begin{split}
&|F_x|^2  = |F_y|^2 = \frac{1}{2} \left( |F_x|^2 + |F_y|^2 \right) = \frac{1}{2}  ( u_x^2 +v_x^2 + w_x^2 + u_y^2 +v_y^2 +w_y^2 ) \\
& =\;|h_z|^2 + |h_{\bar z}|^2   +  2 |w_z|^2   =  |h_z|^2 + |h_{\bar z}|^2  +  2 |h_z|\cdot|h_{\bar z} |  =  \left( |h_z|  + |h_{\bar z}| \right )^2
\end{split}
\end{equation}
Hence
\begin{equation}
|F_x|  = |F_y| =  |h_z|  + |h_{\bar z}|  \,=\,  \norm{Dh(z)}  \;\;-\textrm{the operator norm}
\end{equation}
Now the area formula simplifies further in terms of $\,h\,$:
\[
 \left| F(\Omega)\,\right|  \, =\, \iint_\Omega (|h_z|  + |h_{\bar z}|)^2\; \textrm{d}x\, \textrm{d}y
\]

A word of caution, the variational equation for this latter integral, when considered for all homeomorphisms $\,h \colon \Omega  \overset{\textnormal{\tiny{onto}}}{\longrightarrow} \Omega^\ast $ , is not the Laplace equation.

\subsection{The Second Beltrami Equation}

 Suppose the minimal surface $ F = (h,w):\Omega \rightarrow  \mathbb C \times \mathbb R\,$,  in isothermal parameters, has $w\not\equiv \const$. Consequently, the zeros of the holomorphic function $\,h_z\,$ are isolated. This yields what we call the \textit{second Beltrami equation} for $h :$
\begin{equation}
{h_{\bar z}}  =  \nu(z)\, \overline{h_z} \;,\;\;\; \;\;\nu = \bar{\lambda}^2(z)\;,\;\;\quad\;\quad\;\textrm{and} \;\;\;w_z  =  \pm\;i\,\lambda \; h_z\,
\end{equation}
where $\,\lambda :\Omega \rightarrow \mathbb C\,$ is a meromorphic function. The $\nu$-coefficient  $\,\nu(z) = \overline{\lambda}\,^2(z)\,$, will be viewed as a known quantity, whereas $\,h\,$ as one of many possible solutions to this equation. Nonetheless, the Gauss normal field turns out to be independent of the solution, namely
\begin{equation}\label{211}
  N(z) = \left(\,\xi(z),\;\tau(z)\right) = \; \left(\frac{2i\, \overline{\lambda } }{1 +|\lambda|^2}\;,\; \frac{1  - |\lambda|^2}{1 +|\lambda|^2} \right) \;\in \mathbb S^2 \subset \mathbb C\times \mathbb R
 \end{equation}

Observe that for any minimal graph (with $\,h\,$ orientation preserving) the normal vectors $\,N(z)\,$ belong to   the northern hemisphere; their vertical components  $\,\tau = \tau(z)\,$ are positive. By virtue of~\eqref{211},  the vertical components  measure quasiconformality of $\,h \colon \Omega  \rightarrow \mathbb C $ at the given point $\,z\in \Omega\,$.
For example, if a minimal graph  is obtained by lifting a $\,K$-quasiconformal harmonic map $\,h : \Omega \rightarrow \mathbb C\,$ then its spherical image lies in the cap
\[\overset{\frown }{\mathbb S}_{\!\!_K} = \{(\xi,\tau)\, ; \;|\xi|^2 + \tau^2 = 1\,,\;\; \tau \geqslant \frac{1}{K}\}\,\subset\,\mathbb S^2\]

\subsection{Catenoid and Helicoid}\label{sec28}
The best known minimal surfaces are the catenoid and helicoid.  The catenoid $\,F =(h,w) :
\mathbb C_\circ \rightarrow  \mathbb C \times \mathbb R\,$  is furnished by
the parameters:
\begin{equation}\label{catenoid}
  h(z) =  \frac{1}{2} \left( z + \frac{1}{\bar z}\right ) ,\;\;\;\;\;  w(z)  = \log |z| \,,\;\;
\end{equation}
The complex coordinate map $\,h :\mathbb C_\circ\,\overset{\textnormal{\tiny{onto}}}{\longrightarrow}\,\;\mathbb C\setminus \mathbb D $ folds along the unit circle, where the second Beltrami equation changes its ellipticity status
\begin{equation}\label{Beltr}
h_{\bar z} = -\bar z^{-2} \overline{h_z}
\end{equation}
\[
|\nu(z)| =\frac {1}{|z|^2} \;
\begin{cases}
&<1 \;,\; \textrm{if}\;\; |z|>1\;,\;\;\textrm{orientation preserving}\\
&>1 \;, \; \textrm{if}\;\; |z| <1\;,\;\;\textrm{orientation reversing}
 \end{cases}
\]
 The Gauss map is precisely the stereographic projection $\,N :\mathbb C_\circ \overset{\textnormal{\tiny{into}}}{\longrightarrow}\, \mathbb S^2\,$.
\begin{equation}
 N(z)  = \left( \frac{2z}{1+|z|^2}\;,\frac{|z|^2 -1}{1+|z|^2}     \right)\;,\;\;\;\;\;\;\;\; z \in \mathbb C_\circ
\end{equation}
Thus $\,N :\mathbb C_\circ \overset{\textnormal{\tiny{into}}}{\longrightarrow}\, \mathbb S^2\,$ is a one-to-one conformal map which omits the north and the south poles of the sphere.  By way of illustration, here is another solution to the same equation (\ref{Beltr}), which gives rise to the isothermal coordinates for  Enneper's surface
\[
 h(z) =  \bar{z} \;-\frac{1}{3} \,z^3 \;,\;\;\;\;\;\ w(z)  =  \re \,z^2\, , \;\;\;\;\;\;\; z\in \mathbb C
\]
This time the Gauss map omits only the north pole of the Riemann sphere. By way of digression, Catenoid and Enneper's surfaces are the only complete regular minimal surfaces whose normal map is one-to-one~\cite[p. 87]{Os1}. The reader may wish to verify that $\,h(z) =  \bar{z} \;-\frac{1}{3} \,z^3\,$ is injective in the unit disk.

Let us now return to the catenoid. This time  $\,h= h(z)\,$ is restricted to an annulus $\,\A \;= A(1,R)\;= \left\{ z\in \C \colon \;1\;< \abs{z}\;<R \;\, \right\}\,$
\begin{equation}
   h : \mathbb A  \overset{\textnormal{\tiny{onto}}}{\longrightarrow} \mathbb A^\ast \,,\;\;\;\;\;\; \mathbb A^\ast= A(1,R^\ast),\;\;\;\; R^\ast = \frac{1}{2}\left( R +\frac{1}{R}\right )
\end{equation}
This is a doubly connected minimal graph over $\mathbb A^\ast\,$. Noteworthy is that a different type minimal surface will emerge if we
change sign of the $\nu$-coefficient at (\ref{Beltr}). This produces  a helicoid  expressed by locally defined conformal parameters as follows:
\begin{equation}\label{helicoid}
  h(z) =  \frac{1}{2} \left( z - \frac{1}{\bar z}\right ) ,\;\;  w(z)  = \pm\,\arg \,z\, \,,\;\;\;\textrm{thus}\;\; \overline{\nu(z)} = z^{-2}\;, \;\;z\in\mathbb C_\circ
\end{equation}
To obtain single valued global parametrization one needs only replace $\,z \in\mathbb C_\circ\,$ with   $\, e^z\,, \,z \in\mathbb C \,$. Such uniformization of the parameter yields the familiar global isothermal representation of a helicoid
\begin{equation}
  h(z) =  \frac{1}{2} \left( e^z - e^{-\bar z} \right ) ,\;\;\;\;\;  w(z)  = \,\im \,z\, \,,\;\; \quad\; z\in\mathbb C
\end{equation}

\subsection{Principal Harmonics} The two complex harmonic functions that we conferred about in (\ref{catenoid}) and (\ref{helicoid}) will be useful. Let us reserve for them special notation,
 $$ \,h^\sharp(z) =  \frac{1}{2} \left( z + \frac{1}{\bar z}\right )\;\;\;\;\;\;\;\;\;\;\;\;\; \,h^\flat(z) =  \frac{1}{2} \left( z - \frac{1}{\bar z}\right) \;,\;\;\;\; z = \rho e^{i\theta} $$
 We have the following Dirichlet and Neumann boundary conditions on the unit circle $\,|z| = 1\,$:
 \begin{equation}\nonumber
 \begin{split}
  \begin{cases} \;\;|h^\sharp(z)| \equiv 1 \;\;,\;\;\;\;\;\;\;\;\; \left|\frac{\partial}{\partial \rho} \,h^\sharp (z)\right | \equiv 0\\\\
  \;\; |h^\flat(z)| \equiv 0\;\;,\;\;\;\;\;\;\;\;\;\left|\frac{\partial}{\partial \rho}\, h^\flat(z) \right| \equiv 1
\end{cases}\end{split}
 \end{equation}
More generally, to a given nonnegative real number  $\,\upsilon\,$ there corresponds an orientation preserving harmonic mapping
\begin{equation}
\begin{split}
   &h^\upsilon (z) =  h^\sharp(z) \;+ \;\upsilon\, h^\flat(z) = \frac{1}{2} \left( z + \frac{1}{\bar z}\right ) \;+\;\frac{\upsilon}{2} \left( z - \frac{1}{\bar z}\right )\\
   &=\;\frac{1+\upsilon}{2} z + \frac{1-\upsilon}{2}\,\frac{1}{\bar z} = \;\left(\frac{1+\upsilon}{2} \rho + \frac{1-\upsilon}{2}\,\frac{1}{\rho}\right) e^{i\theta} \;,\;\;\;\;\;\;\;\; z = \rho\, e^{i\theta}
 \end{split}
 \end{equation}
Thus $\,h^1\,$ is the identity mapping. For all $\upsilon \geqslant 0\,$, direct computation shows that at the unit circle, $\,|z| = 1\,$,  we have;
\begin{equation}\label{Initial}
  \;h^\upsilon(z)  = z \;,\quad
  \;\;\frac{\partial}{\partial \rho} h^\upsilon (z)   =  \upsilon \,z \;, \quad
  \frac{\partial}{\partial \rho} |h^\upsilon (z)|   =  \upsilon\;,\;\;\;
 \end{equation}
The $\nu$-coefficient of $\,h^\upsilon\,$ at  $\,z \in\mathbb C_\circ\,$  equals
$$\;\nu (z) \;= \;\nu(z,\upsilon)\; =\;\frac{\upsilon -1}{\upsilon +1}\, \frac{ 1}{\bar{z}^2}\, $$
When $\,\upsilon\,$ is strictly positive, we have uniform ellipticity in the second Beltrami equation outside the unit disk,
\[
 |\nu(\rho\,e^{i\theta})| \leqslant |\nu(e^{i\theta})| \;= \frac{|1-\upsilon |}{1+\upsilon}\, < 1\;,\;\;\;\;\;\textrm{for} \;\;\;\rho\, > 1
\]
Thus $\,h^\upsilon\,$ is $\,K$-quasiconformal in $\,\mathbb C\setminus \overline{\mathbb D}\,$, where  $\, K  = \max\{\upsilon, \upsilon^{-1}\}\,$. In the critical case of $\,\upsilon = 0\,$ the ellipticity of the Beltrami equation is lost at the unit circle where the Jacobian of $\,h^\sharp\,$ vanishes. The harmonic mappings $\,h^\upsilon \,: \mathbb C_\circ \rightarrow \mathbb C \,$ give rise to a catenoid if $\,0\leqslant \upsilon \,< 1\,$, the punctured plane if $\,\upsilon = 1\,$,  and helicoid if $\,\upsilon > 1\,$. Outside the unit disk they are self-homeomorphisms  $\,h^\upsilon : \mathbb C \setminus \mathbb D \rightleftarrows\, \mathbb C\setminus \mathbb D $.  When restricted to an annulus $\,A(1,R) = \{z ; 1<|z|<R\}\,$, these mappings turn out to be extremal for numerous questions concerning minimal surfaces. From now on, to capture only doubly connected surfaces, we shall confine ourselves to the parameters:
\begin{equation}
                   0 \leqslant \;\upsilon\;\leqslant 1
\end{equation}
One may view $\,h^\upsilon\,$ as function of circles defined by the rule $\, h^\upsilon(\mathbb T_\rho) = \mathbb T_{\rho(\upsilon)}\,$, where $\,\rho(\upsilon) = \frac{1+\upsilon}{2}\,\rho +\frac{1-\upsilon}{2} \,\frac{1}{\rho} \;. $ Let us paraphrase this view by calling this function \textit{harmonic evolution} of the inner boundary $\,\mathbb T =\mathbb T_1\,$.
 By virtue of equations (\ref{Initial}) parameter $\,\upsilon\,$ represents the initial rate of change of the radii of the circles. Therefore, the term \textit{initial speed of the evolution} should be attached to $\,\upsilon$.

\section{Bj\"orling Problem for Minimal Surfaces}
There are several natural geometric problems that lead to minimal surfaces. The classical Plateau problem, for example, is the question of finding
a surface framed by one or several Jordan curves with minimum area. Thus the boundary components of the surface are given and fixed.
This problem is a geometric counterpart of the Dirichlet problem for elliptic equations.
Another classical approach to creation of minimal surfaces goes back to Bj\"orling~\cite{Bj}. In it a minimal surface emanates from a given Jordan
curve under prescribed slope at every point of the curve, and continues to grow (on one or both sides of the curve) until unacceptable singularities occur.
This is very much reminiscent of the classical initial value problem for curves of a second order ODE.

Note that in this concept we are looking for an isothermal parametrization of a minimal surface having prescribed Dirichlet and Neumann boundary values. This leads us to the familiar illposed Cauchy problem for the Laplace equation.  Precisely, we asks for the harmonic extension of a given self-homeomorphism $\,h :\mathbb T\leftrightarrows \mathbb T\,$ of the unit circle $\,\mathbb T\,$ which takes outward  the concentric circles  $\,\{\mathbb T_\rho\}_{1\leqslant \rho < 1+\epsilon}\,$  into Jordan curves with prescribed initial speed, i.e. with given normal derivative at $\,\mathbb T\,$. In this course of
\textit{harmonic evolution of circles} the images of $\,\mathbb T_\rho\,$ when lifted to the minimal surface become isothermal latitude curves.

The illposed Cauchy Problem for elliptic equations (thus non-characteristic setting) has a long and distinguished history. On the one hand the celebrated Cauchy-Kovalevskaya Theorem asserts that Cauchy problems on non-characteristic analytic varieties have unique solutions if the Cauchy data and the coefficients of the partial differential equation are real analytic functions. On the other hand the instability (the lack of continuous dependence of the solutions on the data, even in the real-analytic case), first shown in the famous example by Hadamard \cite{Had2}, demonstrates illposedness of the problem. In fact Hadamard's early work \cite{Had1, Had2} stimulated the theory, see \cite{Lav1, Lav2, Lav3, LRS, Pli}. Analogous questions for minimal surfaces will certainly
gain in interest if we enhance them with geometric interpretations. While analyzing a given Cauchy data, one should bear in mind all its geometric features for the well-posedness. For example the subsequent successful global solutions and their stability seem to depend on
the total energy allowed for the evolution.


Given a Jordan curve  $\,\Gamma\subset \mathbb R^3\,$, it is natural to ask whether there is a regular minimal surface passing through this curve whose normal vectors (Gauss map) are prescribed on $\,\Gamma\,$. In other words the slope of the surface is given at every point of $\,\Gamma\,$.  The two principal conditions must be imposed. The first of these, a geometric one, is that the given normal vectors must be orthogonal to $\,\Gamma\,$. The second, regularity, is that both $\,\Gamma\,$ and the normal vector field must admit a real-analytic parametrization.
In all that follows, we assume  that $F_\circ : \T \rightarrow \mathbb R^3\,$ is a given real analytic one-to-one map, called \textit{parametric Jordan curve},
\[
  F_\circ =F_\circ (e^{i\theta})\;,\;\;\;\;\;\;\frac{\textrm d}{\textrm d \theta } F_\circ( e^{i\theta})  \neq 0 \,,\;\;\;\textrm{for}\;\;\;\;\;\; 0\leqslant \theta < 2\pi
\]
In addition, we shall choose and fix a real analytic vector field  $N_\circ \colon\T \rightarrow \mathbb S^2 \subset \mathbb R^3 \simeq \mathbb C\times \mathbb R \,$ that is orthogonal to $F_\circ$, meaning that
\[
 \left\langle N_\circ(e^{i\theta})\;,\; \frac{\textrm d}{\textrm d \theta } F_\circ( e^{i\theta})  \right\rangle \;\;=  \;0\,,\;\;\;\textrm{for all}\;\;\;\;\;\; 0\leqslant \theta < 2\pi
\]
We shall call such a pair $(F_\circ, N_\circ)$ the real-analytic \textit{Bj\"orling data}.
\begin{definition}(Real-analytic setting) Given the real-analytic Bj\"orling data $(F_\circ, N_\circ)$, extend $F_\circ$ to a minimal surface $F \colon A(r,R) \to \R^3$, for some  $r<1<R$,  whose Gauss map $N(z)= N_\circ (e^{i\theta})$ at  $z=e^{i \theta}$.
\end{definition}
A grasp of our goals is obtained when one has in mind a doubly connected minimal surface $\,\Sigma\,$ that is regular near the unit circle but not far from it.

\subsection{An Example}

To expect here that a regular surface $\,\Sigma\,$, which emanates from a real analytic Jordan curve, will remain regular for the whole process of evolution is entirely unrealistic. Such a situation is illustrated by the following example
\begin{example} Consider the second Beltrami equation in the annulus $A(\frac{3}{4},R)$, $R>1$
\[
   {h_{\bar z}} \;=\; \nu(z) \, \overline{h_z} \,, \;\;\;\;\;\;\; \textrm {where} \;\; \nu(z) = \frac{1}{4\,\bar{z}^4}\,, \;\;\;\;|\nu(z)| < 1
\]
The following solution, together with the associated third isothermal parameter $\,w = w(z)\,$,  represent a minimal surface
 $$h(z) = \frac{1}{15} \left( 16\, z - \frac{1}{\bar z} \right) \;+\; \frac {4}{45} \left( z^3 \,-\frac{1}{\bar z^3}\right)\,,\;\;\;\;\;\;\;\; w(z) = \frac{4}{15}  \im \left( z - \frac{4}{z} \right)$$
 Thus $\,h(e^{i\theta}) = e^{i\theta}\,$ and  we have
\[
  h_z(z) = \frac{4( 4 + z^2)}{15}  \,,\;\;\;\;h_{\bar z}(z) = \frac{ ( 4 + \bar{z}^2)}{15 \;\bar{z} ^4}\,,\;\;\;\;w_z = \frac{-2i( 4 + z^2)}{15\; z^2}
\]
 Hence the conformality relation  $ h_z \,\overline{h_{\bar z}} \, +\, w_z^2  =  0 $ is readily verified. The Jacobian determinant of $\,h\,$ is positive

\[
 |h_z|^2  \,-\,|h_{\bar{z}} |^2  \; \;\geqslant  \;\left|\frac{ 4 + z^2}{15} \right|^2  \;> 0\,,
\]
except for two critical points  of the  parametrization at  $\,z = \pm \,2i\,$, where  we have $h_z = h_{\bar z} = w_z = 0$. Let us examine $h$ in the half closed annulus $A[1,R)$. The evolution of the inner circle begins in a regular fashion with a positive speed
\[
  \frac{\partial }{\partial \rho } \left|h(\rho e^{i\theta})\right | _{\;\textrm{at}\;\rho = 1}\;\; =  \;\frac{17}{15} \;+ \frac{8}{15}
  \,\cos 2\theta \;\geqslant\; \frac{3}{5}
\]
We obtain a minimal graph over an annulus $\, A(1, \sigma)\,$, with $\sigma >1$ sufficiently close to $1$.  The initial average speed equals
\[
  \upsilon :=  \dashint_{\T}|h|_\rho  = \;\frac{17}{15}
\]
Nevertheless, far from the unit circle the injectivity of $\,h\,$ is lost. Even more, the map $\,h\,$ returns to its initial values on the inner circle, it  even vanishes at $\, z = \pm i \,\lambda\,$,
\[h(\pm i \,\lambda) = 0, \;\;\;\;\textrm{where} \;\;\;   \lambda ^6 - 48 \lambda ^4 + 3 \lambda ^2  - 4  = 0\,,\;\;\;\lambda \approx 6.92\]
\end{example}

\subsection{Existence and uniqueness for the Bj\"orling problem}

\begin{proposition} \label{cauchyprop}
To each real analytic Bj\"orling data $(F_\circ, N_\circ)$, with $N_\circ \colon \T \overset{\textnormal{\tiny{into}}}{\longrightarrow} \overset{\frown }{\mathbb S}_{\!\!_K}$, there corresponds unique minimal surface $F \colon A(r,R) \to \mathbb C \times \mathbb R$, defined for some $r<1<R$, such that $\, F(z) =  F_\circ (e^{i\theta})\,$ and its Gauss map $\,N(z)=N_\circ (e^{i\theta})$ for $\,z = e^{i\theta}\,$.
\end{proposition}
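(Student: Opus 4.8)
The plan is to reconstruct the surface from $(F_\circ,N_\circ)$ by the classical Schwarz analytic-continuation formula, organized around the holomorphic vector field $F_z$, and to read off the doubly connected character of the surface from the fact that the Bj\"orling curve is \emph{closed}. First I would pass from the geometric data to genuine Cauchy data on $\T$. Writing $z=\rho e^{i\theta}$, an isothermal parametrization obeys $\partial_\theta F=2\re(iz\,F_z)$ on $\T$, so the curve already fixes $\partial_\theta F=\frac{\textnormal d}{\textnormal d\theta}F_\circ(e^{i\theta})\neq 0$ there; together with orthogonality of $N_\circ$ to $\frac{\textnormal d}{\textnormal d\theta}F_\circ$, the conformality relations $F_\rho\perp F_\theta$ and $|F_\rho|=|F_\theta|$ on $\T$, and a fixed orientation of the frame $(F_\rho,F_\theta,N)$, this determines $F_\rho$ on $\T$ as $\pm\,N_\circ\times\frac{\textnormal d}{\textnormal d\theta}F_\circ$ (sign fixed so that the resulting Gauss map equals $N_\circ$). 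Thus the full $1$-jet of $F$ along $\T$, equivalently the real-analytic map $F_z|_\T\colon\T\to\C^3$, is determined by $(F_\circ,N_\circ)$, and this jet is conformal, i.e.\ $\langle F_z,F_z\rangle=0$ on $\T$ (complex bilinear form, no conjugation). The hypothesis $N_\circ(\T)\subset\overset{\frown }{\mathbb S}_{\!\!_K}$ enters here: by~\eqref{211} the corresponding $\lambda$ satisfies $|\lambda|^2=\frac{1-\tau}{1+\tau}\le\frac{K-1}{K+1}=k<1$ on $\T$, so $F_z\neq 0$ there and $\nu=\overline\lambda^{\,2}$ is uniformly elliptic along $\T$.

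Next I would construct $F$. By real-analyticity of the data, $F_z|_\T$ extends holomorphically to some annulus $A(r,R)$, $r<1<R$, and the identity theorem propagates $\langle F_z,F_z\rangle\equiv 0$ and $|\lambda|<1$ to $A(r,R)$ (after shrinking $r,R$ if necessary). Set
\[
  F(z)\;=\;F_\circ(1)\;+\;2\,\re\!\int_{1}^{z}F_\zeta(\zeta)\,\textnormal d\zeta\,,\qquad z\in A(r,R),
\]
the integral along any arc in $A(r,R)$. Then $F$ is real, harmonic (its $z$-derivative is the holomorphic $F_z$) and isothermal ($\langle F_z,F_z\rangle\equiv 0$); it is an immersion near $\T$ since $F_z\neq 0$ there, and a short inspection of the boundary $1$-jet (using $\partial_\theta F=2\re(izF_z)$, $\partial_\rho F=2\re(e^{i\theta}F_z)$ on $\T$, and $F(1)=F_\circ(1)$) gives $F|_\T=F_\circ$ with Gauss map $N|_\T=N_\circ$. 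Since $|\lambda|<1$ near $\T$, the complex coordinate $h$ is an orientation-preserving $K$-quasiconformal local diffeomorphism there, so $F$ is a regular minimal graph near $\T$ --- a bona fide minimal surface. All that remains is to verify that $F$ is single-valued on $A(r,R)$.

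This single-valuedness is the one non-automatic point, and it is exactly where closedness of $F_\circ$ is used: one needs the period of the (a priori multivalued) primitive to have vanishing real part. Taking the generating loop of $A(r,R)$ to be $\T$,
\[
  \re\oint_\T F_z(z)\,\textnormal d z\;=\;\int_0^{2\pi}\re\!\bigl(ie^{i\theta}F_z(e^{i\theta})\bigr)\,\textnormal d\theta\;=\;\tfrac12\int_0^{2\pi}\partial_\theta F_\circ(e^{i\theta})\,\textnormal d\theta\;=\;0,
\]
by $\partial_\theta F=2\re(izF_z)$ on $\T$ and $2\pi$-periodicity of $F_\circ$. So the closed curve forces the entire period condition --- including the single-valuedness of the third coordinate $w$, cf.~\eqref{Zero} --- and $F$ is a genuine doubly connected minimal surface of finite conformal type; for Bj\"orling data prescribed on an interval no such cancellation occurs and one obtains only a simply connected patch. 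I expect this period identity to be the conceptual crux, even though the computation itself is short.

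Finally, uniqueness. If $\widetilde F$ is another minimal surface on an annulus about $\T$ with $\widetilde F|_\T=F_\circ$ and Gauss map $N_\circ$ on $\T$, then by the first step $\widetilde F$ has the same $1$-jet on $\T$ as $F$; hence $\widetilde F_z$ and $F_z$ are holomorphic on a common annulus and agree on $\T$, so $\widetilde F_z\equiv F_z$, and then $\widetilde F\equiv F$ since both equal $F_\circ(1)$ at $z=1$. Equivalently, this is the Cauchy--Kovalevskaya uniqueness for the analytic non-characteristic Cauchy problem for the Laplace system applied to that $1$-jet. The remaining ingredients --- the orientation-fixed determination of $F_\rho$ on $\T$ and the holomorphic extension of a real-analytic function from $\T$ to an annulus --- are routine.
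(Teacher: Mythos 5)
Your proof is correct, and while it lands on the same Cauchy--Kovalevskaya principle as the paper, it is organized quite differently. The paper splits $F=(h,w)$, determines the $\nu$-coefficient on $\T$ from $N_\circ$ via \eqref{aa}, uses the second Beltrami equation \eqref{Be} to recover $h_\rho$ from $h_\theta$, uses the normality relations \eqref{ss} to recover $w_\rho$ from $w_\theta$, and then builds $h$ and $w$ separately by matching Fourier coefficients against the general annular harmonic expansion (the $\log\abs{z}$ terms included); conformality is then checked on $\T$ and propagated by unique continuation. You instead work directly with the $\C^3$-valued holomorphic field $F_z$: read off the $1$-jet of $F$ along $\T$ from the frame $(F_\rho,F_\theta,N_\circ)$ using conformality and orthogonality to $N_\circ$, extend holomorphically, and recover $F$ by the classical Schwarz formula $F=F_\circ(1)+2\re\int_1^z F_\zeta\,\dtext\zeta$, with conformality $\langle F_z,F_z\rangle\equiv 0$ propagated by the identity theorem. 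This is the textbook treatment of the Bj\"orling problem and it makes the period condition $\re\oint_\T F_z\,\dtext z=0$ (i.e.\ the role of the curve being closed) explicit, whereas the paper hides this inside the Fourier-series solvability of the annular Dirichlet--Neumann problem; conversely, the paper's route keeps the $\nu$-coefficient and the second Beltrami equation in the foreground, which is the machinery the rest of the paper runs on. Both use the hypothesis $N_\circ(\T)\subset\overset{\frown}{\mathbb S}_{\!\!_K}$ for the same reason --- uniform ellipticity / $F_z\ne 0$ on $\T$ --- and both give uniqueness by matching $1$-jets. One small point worth making explicit when you determine $F_\rho$: the constraint that nails down $F_\rho$ to $\pm N_\circ\times F_{\circ,\theta}$ is not just $F_\rho\perp F_\theta$ and $\abs{F_\rho}=\abs{F_\theta}$ (which leave a circle of choices), but also $F_\rho\perp N_\circ$, i.e.\ that $N_\circ$ is normal to the tangent plane spanned by $F_\rho,F_\theta$; you clearly use this, but it deserves to be stated alongside the other conformality conditions.
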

\begin{proof}
The proof is immediate from Cauchy-Kovalevskaya's theorem. We shall, nevertheless, give some details for better insight.

We look for $F : A(r,R) \rightarrow \mathbb C\times \mathbb R\,$ in the form $\,F(z)=\big(h(z),w(z)\big)\,$,
where $h \colon A(r,R ) \to \C$ and  $w \colon A(r,R ) \to \R$ are harmonic functions. They are know at the unit circle. Namely,  $F_\circ (e^{i \theta})= \big(h_\circ (e^{i \theta}), w_\circ (e^{i \theta})  \big)$, so we have the initial real analytic values.
\begin{equation}\label{Fzero}
\begin{split}
h(e^{i \theta}) &= h_\circ (e^{i \theta})\\
w(e^{i \theta}) &= w_\circ (e^{i \theta})
\end{split}
\end{equation}
The complex harmonic function $h$ must satisfy a second Beltrami equation
\begin{equation} \label{Be} h_{\bar z} = \nu (z) \overline{h_z}\end{equation}
where $\nu$ is antianalytic in $A(r,R)$. The values $\nu (e^{i\theta})$ are uniquely determined by knowing $N_\circ (e^{i \theta})$,
\begin{equation}\label{aa}N_\circ (e^{i \theta})  = \left(\frac{2 i \sqrt{\nu}}{1+ \abs{\nu}} \; , \; \frac{1-\abs{\nu}}{1+\abs{\nu}}  \right) \;= (\xi , \tau) \in \mathbb C\times \mathbb R
\end{equation}
 In particular, $\nu$ is real-analytic on $\T$, it has single valued square root, and $\abs{\nu(e^{i \theta})} \le k <1$. The latter condition  allows us to solve (\ref{Be}) for $\,h_\rho\,$ in terms of $\,h_\theta = \frac{\textrm d\,h_\circ}{\textrm {d}\theta}\,$ on the unit circle.

  Therefore, there exists unique harmonic function $\,h = h(z)\,$ in some annulus $A(r,R)$, $r<1<R$, such that
\begin{equation}\label{number1}
h(z)= \sum_{n \in \Z} \left(a_n z + \frac{b_n}{\bar z^n}\right) + c \log \abs{z},
\end{equation}
where the coefficients $a_n$ and $b_n$ are determined  from the Fourier series
\[
\begin{split}
h_\theta (e^{i \theta}) &= i \sum_{n \in \Z} n(a_n+b_n) e^{i n \theta} \\
h_\rho (e^{i \theta}) &= \sum_{n \in \Z} n(a_n-b_n) e^{i n \theta} + c
\end{split}
\]
Concerning the real isothermal coordinate $\,w\,$, we observe that its derivatives must satisfy

\begin{equation}\label{ss}
\begin{split}
&\re (\overline{\xi}\, h_\rho ) \;+\; \tau w_\rho = 0 \;,\;\;\;\;\;\;(\xi, \tau)  = N \in \widehat{\mathbb S}_K\,,\;\;\;\tau >\frac{1}{K} \\
& \re (\overline{\xi}\, h_\theta ) \;+\; \tau w_\theta = 0 \;,\;\;\;\;\;\;(\xi, \tau)  = N \in \widehat{\mathbb S}_K\,,\;\;\;\tau >\frac{1}{K}
\end{split}
\end{equation}
These relations simply express the fact that $\,N\,$ is orthogonal to the tangent vectors $\,F_\rho = (h_\rho , w_\rho )\,$ and $\,F_\theta = (h_\theta , w_\theta)\,$. As before, knowing $\,w\,$ and $\,w_\rho\,$ on $\,\mathbb T\,$ determines uniquely the harmonic function $\,w = w(z)\,$ in an annulus $\,A(r,R)\,$. It remains to verify the conformality relation $\,h_z\, \overline{h_{\bar z}} \;+\; w_z^2\;\equiv 0\;$ in the annulus.
We see from (\ref{ss}) that on the unit circle  $ -2 \tau \,w_z \,=\, \overline{\xi}\,h_z \;+\xi\, \overline{ h_{\bar z}}  $. Hence
\[
 4\,\tau^2\,w_z^2  \;=\; \overline{\xi}\,^2 h_z ^2 \;+\; \xi\,^2 \overline {h_{\bar z}}\,^2 \;+\; 2\, |\xi|\,^2\, h_z \overline {h_{\bar z}}
\]
where we recall from (\ref{aa}) that $\,\xi^2\, = \frac{- 4\,\nu \,}{(1+|\nu|)^{2}} \;$ and $\; \tau ^2 =  \frac{(1-|\nu|)^2}{ (1+|\nu|)^{2}}\,$, and that $\; \nu = h_{\bar z}/\overline {h_z}\,$. This yields the conformality relation $\,h_z\, \overline{h_{\bar z}} \;+\; w_z^2\;\equiv 0\;$ on the unit circle. Since the left hand side of this equation represents a complex analytic function in an annulus $\,A(r,R)\,$, by unique continuation property,  the conformality relation remains valid in this annulus.
\end{proof}

\subsection{Alternative formulation}
The Bj\"orling Problem can be formulated many different ways by using various geometric terms as the Cauchy data.  By way of illustration, suppose we are given  a real analytic one-to-one parametric Jordan curve    $F_\circ (e^{i \theta})= \big(h_\circ (e^{i \theta}), w_\circ (e^{i \theta})  \big)$  together with the slope of the surface at every point of the curve. We take the vertical coordinate $\,\tau\,$ of the normal vector field $\,N_\circ( e^{i\theta}) \, = (\xi, \tau) \in \mathbb C \times \mathbb R \,$ as representative of the slope of the surface, and assume that $\,0<\tau \leqslant 1 \,$.  This
 determines the modulus of the $\nu$-coefficient of the minimal surface at the unit circle, see~\eqref{aa}
\[
    k = \,k(z)\, =\,|\nu(z)| =  \frac{1-\tau(z)}{1+\tau(z)} < 1
\]
The minimal surface  in an annulus $\,A(r,R)\,,  \,r<1<R\,$,  is obtained by solving the following system of equations for complex function $\,h\,$ and real function $\,w\,$.

\begin{equation}\label{2e}
\;\begin{split}\;
\begin{cases}
h_z \,\overline{h_{\bar z}} \;+\; w_z^2  = 0 \;,    \qquad h_{z \bar z} \equiv 0,  \quad w_{z \bar z}   \equiv 0 \\
|h_{\bar z} |  =  k  |h_z| \,;\qquad 0\leqslant k = k(z)  < 1
\end{cases}
\end{split}
\end{equation}
Here we regard as known quantities the tangential derivatives $\,h_\theta\,$, $\,w_\theta\,$ and $\, k(z)\,$ at  $\,z= e^{i\theta}\,$. Let us assume that $\,h\,$ takes the unit circle $\,\mathbb T\,$ diffeomorphically onto a Jordan curve, so $\,h_\theta \neq 0\,$. The following compatibility inequality for the tangential derivatives must be imposed
\begin{equation}\label{cmpatibility}
\frac{|w_\theta|}{|h_\theta|}  \leqslant   \sqrt{ K^2 - 1} \;,\quad K = K(z) = \frac{1+k(z)}{1-k(z)}\ge 1
\end{equation}
This is immediate from the second equation in (\ref{ss});
\[
  \left|\frac {w_\theta}{h_\theta }\right| \; =\; \left|\frac{\re(\bar{\xi} h_\theta)}{ \tau\, h_\theta}\right| \leqslant \frac{|\xi|}{\tau}  = \sqrt{ \frac{1}{\tau^2}  - 1 } \leqslant \sqrt{ K^2 -1}
\]
In polar coordinates the system~\eqref{2e} takes the form

\begin{equation}\label{2epolar}
\;\begin{split}\;
\begin{cases}
(h_\rho - i h_\theta) \,( \overline{h_\rho}  - i \overline{h_\theta})  \;+\; (w_\rho - i w_\theta )^2  = 0 \;,     \\
|h_\rho + i h_\theta |  =  k  |h_\rho - i h_\theta|
\end{cases}
\end{split}
\end{equation}
Straightforward computation reveals that

\begin{equation}
w_\rho  \;=\; \frac{1}{K}  \sqrt{(K^2 -1) |h_\theta|^2  \;-\; w_\theta ^2 }\,,\qquad\,\;\;\;\; h_\rho = (a + i b )\, h_\theta
\end{equation}
where
\[
a\,= \frac{-\,w_\theta}{K\,|h_\theta|^2}  \sqrt{(K^2 -1)|h_\theta|^2  - w_\theta^2}\;\;\;\;\;\textrm{and}\;\;\;
 b = - \frac{1}{K}\left( 1 + \frac{w_\theta^2}{|h_\theta|^2}\right)\; \leqslant 0
\]
The latter inequality,  $ \, \im\,\frac{h_\rho}{h_\theta} \;= b \leqslant 0\;$,  has a geometric significance. It tells us that at the unit circle the vectors $\,h_\rho\,$ are directed outward the Jordan curve $ h(\mathbb T)\,$
if $h(\T)$ is traversed counterclockwise. In particular, if $\,h(\mathbb T) = \mathbb T\,$ then
\[
\abs{h}_\rho  = -\abs{h_\theta}\, \im\,\frac{h_\rho}{h_\theta}    \ge \frac{\abs{h_\theta}}{K} \ge 0
\]
 Let us illustrate this computation in two examples.

First consider the ellipse $\,F_\circ( e^{i\theta}) = ( e^{i\theta},    \lambda \cos \theta ) \,$ circumscribed on the cylinder $\,\mathbb T\times \mathbb R\,$, where we chose $ \lambda \ne 0 $ to be constant on $\,\mathbb T\,$. The minimal surface is flat if its initial slope coincides with that of the plane of the ellipse; that is, when  $\,  \lambda  = \sqrt{\tau^{-2} -1}\,$.

\begin{center}
\begin{figure}[h!]
\includegraphics[height=70mm]{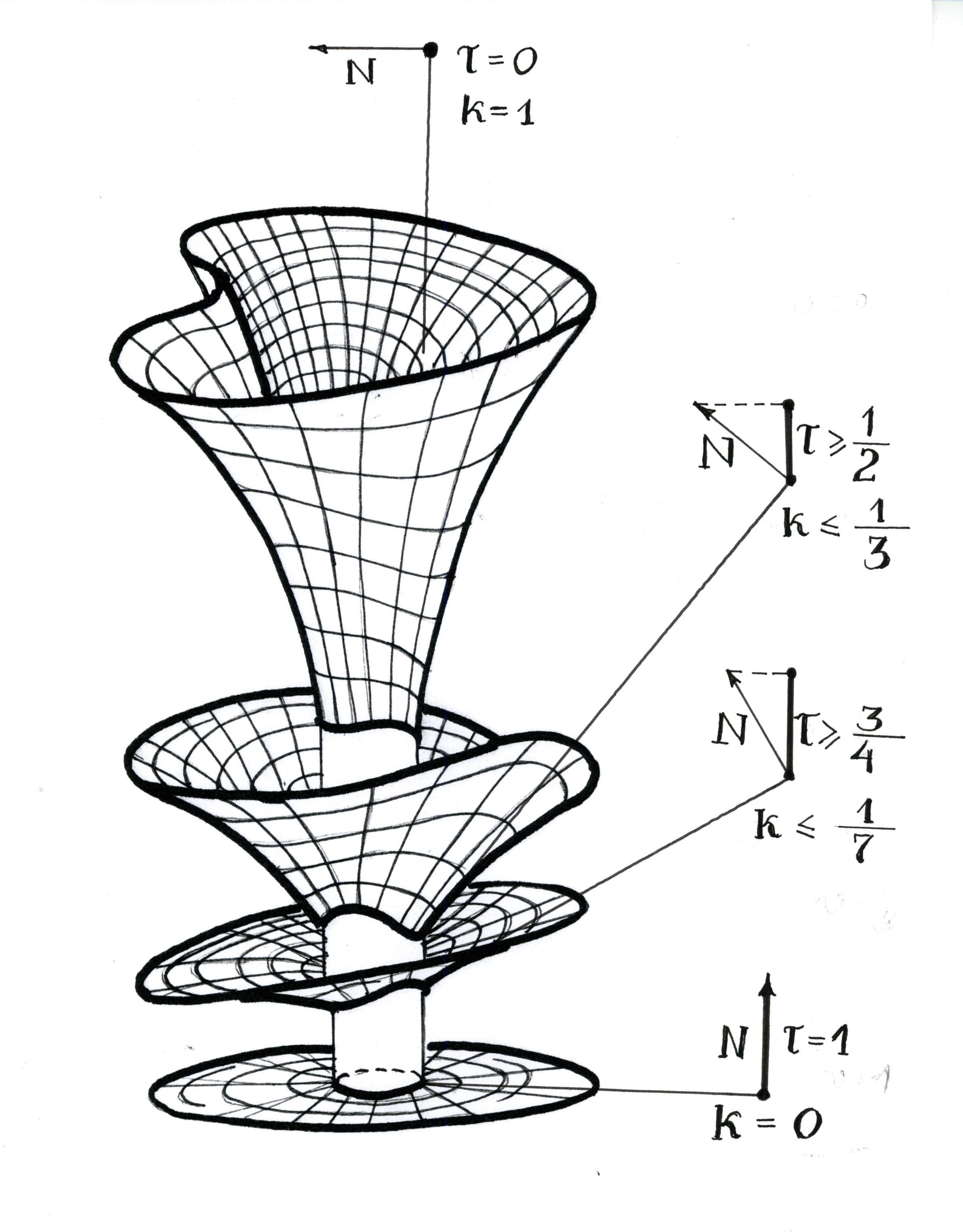}
\caption{Evolution of minimal surfaces with different initial slopes}
\end{figure}
\end{center}

One general method for obtaining Cauchy data is by cutting a hole in a given surface with the aid of a cylindrical chisel. This is the method we want to exemplify in detail, as it motivates geometrically our sharp estimates in Section~\ref{confmod}.

\subsection{Example: Enneper's surface evolves from hyperbolic paraboloid}

We start with the hyperbolic paraboloid
\[w=x^2-y^2=\re z^2\]
which is a negatively curved surface, but not minimal.
Take its intersection with $\abs{z}=1$ as initial data; that is,
\[F_\circ(e^{i\theta})=(e^{i\theta},\cos 2\theta);\quad N_\circ(e^{i\theta})=\frac{1}{\sqrt{5}}(-2e^{-i\theta},1)\]
Notice the constant slope of normal vector $N_\circ$; it corresponds to
\[k=\frac{\sqrt{5}-1}{\sqrt{5}+1} \,, \quad   K = \sqrt{5}\,,\quad \tau = \frac{1}{ \sqrt{5}}\]

To find the solution of the above Bj\"orling problem, we first find the antianalytic $\nu$-coefficient,
\[\nu(e^{i \theta})=-k\,e^{-2i\theta}, \quad \mbox{hence } \nu(z)= -k\bar z^2, \quad z \in \C \]
On the unit circle we have
\[
h_\rho+i h_\theta=\nu(\bar h_\rho+i\bar h_\theta)
\]
hence
\[
h_\rho= \,- i\, \frac{1+\abs{\nu}^2}{1-\abs{\nu}^2}\,h_\theta\;+\;\frac{2\nu\,i}{1-\abs{\nu}^2}\,\bar h_\theta
\]
Given our initial condition $h(e^{i\theta})=e^{i\theta}$,
\[
\begin{split}
h_\rho&=\frac{1+\abs{\nu}^2}{1-\abs{\nu}^2}e^{i\theta}+\frac{2\nu}{1-\abs{\nu}^2}e^{-i\theta} \\
&= \frac{3}{\sqrt{5}}e^{i\theta}-\frac{2}{\sqrt{5}}e^{-3i\theta}
\end{split}
\]
Knowing $h$ and $\,h_\rho\,$ on the unit circle determines uniquely its harmonic extension,
\[
h(\rho e^{i\theta})=\frac{\rho+\rho^{-1}}{2}e^{i\theta}+
\frac{3}{\sqrt{5}}\frac{\rho-\rho^{-1}}{2}e^{i\theta}-\frac{2}{\sqrt{5}}\frac{\rho^3-\rho^{-3}}{6}e^{-3i\theta}
\]
To find $w$, we turn to the first of the normality conditions in (\ref{ss}), namely:  $\re (\xi \bar h_\rho)+\tau w_\rho=0 $,
where
\[\xi=\frac{-2}{\sqrt{5}}e^{-i\theta},\quad \tau=\frac{1}{\sqrt{5}}\]
From this we find
\[w_\rho= \frac{2}{\sqrt{5}}\cos 2\theta\]
and therefore
\[
w(\rho e^{i\theta})=\frac{\rho^2+\rho^{-2}}{4}\cos 2\theta + \frac{\rho^2-\rho^{-2}}{4}\frac{2}{\sqrt{5}}\cos 2\theta
\]
The so obtained isothermal parameters $F=(h,w)$ represent familiar Enneper's surface that evolves from a
Jordan curve with the surface slope being constant along the curve.
In Figure~\ref{ennepprrr} we illustrate four stages of this evolution.
\begin{center}
\begin{figure}[h!]
\includegraphics[height=70mm]{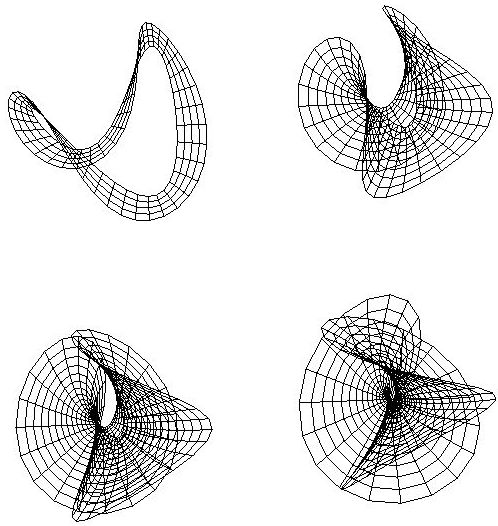}
\caption{Enneper's surface evolves from a closed curve}\label{ennepprrr}
\end{figure}
\end{center}

Perhaps the most natural way of imposing the Bj\"orling data is to borrow it from an existing doubly connected strip of a negatively curved surface,
as we did above with the hyperbolic paraboloid. This is reminiscent of a weak formulation of the Dirichlet problem in a domain $\Omega$ when
the boundary data is presented in the form of a function defined in $\Omega$.

\subsection{Conformal modulus of minimal surfaces}\label{confmod}
The following result is a reformulation of Theorem~\ref{th34} in terms of minimal graphs.
By a \emph{half-circular annulus} we mean a doubly connected domain $\mathcal A\subset \C$ whose inner boundary is the
 unit circle $\T= \left\{z \in \C \colon \abs{z}=1\right\}$.

\begin{theorem}\label{graph}
Let $\Sigma$ be a minimal graph represented by the function $w=f(u,v)$
that is $\mathscr C^1$-smooth in the closure of a half-circular annulus
$\mathcal{A}= \mathcal A(1, \cdot)\subset \{1<u^2+v^2 <\sigma^2\}$. Then
\begin{equation}\label{graph1}
\Mod \Sigma \le \log\frac{K\sigma +\sqrt{K^2\sigma^2-K^2+1}}{K+1},
\end{equation}
where $K\ge 1$ is defined by
\[K^2=1+\max_{u^2+v^2=1}\abs{\nabla f(u,v)}^2  \]
 \end{theorem}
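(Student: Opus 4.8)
\textbf{Proof plan for Theorem~\ref{graph}.}
The strategy is to reduce this statement to Theorem~\ref{th34} by choosing the right isothermal parametrization of the graph $\Sigma$. Since $\Sigma$ is a minimal graph over a half-circular annulus $\mathcal A$ in the $(u,v)$-plane, it comes with the trivial parametrization $(\xi,f(\xi))$ for $\xi\in\mathcal A$; but this is \emph{not} isothermal. The first step is therefore to pass to isothermal coordinates: there is a domain $\Omega\subset\C$, a conformal equivalence class, and a harmonic homeomorphism $h\colon\Omega\to\mathcal A$ together with a harmonic $w\colon\Omega\to\R$ such that $F=(h,w)$ is an isothermal (conformal) parametrization of $\Sigma$, and $\Mod\Sigma=\Mod\Omega$ by definition. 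Because the inner boundary of $\mathcal A$ is the unit circle $\T$, after a conformal reparametrization of $\Omega$ we may assume $\Omega=A(1,R)$ with $R=e^{\Mod\Sigma}$ and that $h$ maps $\T$ homeomorphically onto $\T$ in an orientation-preserving way; here we use the smoothness of $f$ up to $\partial_I\mathcal A$ and the fact (noted in the excerpt, Remark~\ref{remark1} and the Bj\"orling discussion) that the graph is regular near $\T$, so $h$ extends $\mathscr C^1$-smoothly to $A[1,R']$ for some $R'\in(1,R]$ — which is all Theorem~\ref{th34} requires, with its estimate applied on $\T_{R'}$ and then the trivial bound $\sup_{|z|=R'}|h|\le\sigma$ (since $h(\Omega)\subset\{|\xi|<\sigma\}$).

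The crux is the second step: identifying the distortion of $h$ at $\T$ with the quantity $K=\sqrt{1+\max_{\T}|\nabla f|^2}$. This is exactly the content of the ``alternative formulation'' computations in the excerpt. Along $\T$, the slope of the graph is encoded by the vertical component $\tau$ of the Gauss map via $\tau=(1+|\xi|^2)^{-1/2}$ where, for a graph $w=f(u,v)$, the (upward) unit normal is $(1+|\nabla f|^2)^{-1/2}(-\nabla f,1)$, so $\tau(e^{i\theta})=(1+|\nabla f(h(e^{i\theta}))|^2)^{-1/2}$. The distortion inequality \eqref{311} at a boundary point $z\in\T$ is equivalent to $K_h(z)\le 1/\tau(z)$, i.e. to $\|Dh(z)\|^2\le J(z,h)/\tau(z)$; this is precisely the relation between quasiconformality and the vertical component of the normal vector recorded around formula~\eqref{211} and in \eqref{cmpatibility}. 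Taking the supremum over $\T$ and using that $h(\T)=\T$ covers all of $\T$ in the $(u,v)$-plane restricted to $|\xi|=1$, we get that $h$ satisfies \eqref{311} on $\T$ with the constant $K$ as defined in the theorem, and $K\ge1$ because $|\nabla f|^2\ge0$.

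With these two steps in place, Theorem~\ref{th34} (applied on $A[1,R']$, then letting $R'\to R$ if necessary, or simply using that $\sup_\Omega|h|\le\sigma$) yields
\[
\sigma\;\ge\;\sup_{\Omega}|h|\;\ge\;\frac{K+1}{2K}\,R+\frac{K-1}{2K}\,\frac1R.
\]
The final step is purely algebraic: solve this inequality for $R=e^{\Mod\Sigma}$. Writing $t=R>1$, the inequality $\frac{K+1}{2K}t+\frac{K-1}{2K}t^{-1}\le\sigma$ becomes $(K+1)t^2-2K\sigma\,t+(K-1)\le0$, a quadratic in $t$ with positive leading coefficient; since $t>1$ it must lie below the larger root, giving
\[
R\;\le\;\frac{K\sigma+\sqrt{K^2\sigma^2-(K+1)(K-1)}}{K+1}\;=\;\frac{K\sigma+\sqrt{K^2\sigma^2-K^2+1}}{K+1},
\]
and taking logarithms gives \eqref{graph1}. (One should check the discriminant $K^2\sigma^2-K^2+1$ is nonnegative, which follows since $\sigma>1\ge\sqrt{1-K^{-2}}$.)

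\textbf{Main obstacle.} The routine-looking but genuinely delicate point is the passage to isothermal coordinates \emph{with control up to the inner boundary}: one must know that the isothermal parametrization $h$ extends to a $\mathscr C^1$ map on a closed sub-annulus $A[1,R']$ and restricts to an orientation-preserving homeomorphism $\T\leftrightarrows\T$, so that Theorem~\ref{th34} literally applies. This uses the regularity of $f$ up to $\partial_I\mathcal A$ together with boundary regularity of the conformal/isothermal change of variables (a reflection or Schwarz-type argument, or the real-analytic Bj\"orling machinery of Proposition~\ref{cauchyprop}); the remainder of the proof is the bookkeeping and the one-variable quadratic estimate above.
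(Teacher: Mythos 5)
Your proposal is correct and follows essentially the same route as the paper, which treats Theorem~\ref{graph} as a special case of Theorem~\ref{nongraph}: pass to an isothermal parametrization $F=(h,w)$ on $A(1,R)$ with $h\colon\T\leftrightarrows\T$, observe that the slope bound on $f$ translates (via the Gauss map, i.e.\ $K_h=1/\tau=\sqrt{1+|\nabla f|^2}$ on $\T$) into the boundary distortion hypothesis~\eqref{311}, invoke~\eqref{312}, and solve the resulting quadratic in $R$. Your expansion of the paper's terse claim ``the definition of $K$ implies~\eqref{311}'' and your explicit attention to boundary regularity of the isothermal change of variables are sound and consistent with the paper's remark that smoothness issues are handled by approximation.
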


In fact, Theorem~\ref{th34} yields the estimate~\eqref{graph1}
to minimal surfaces other than graphs.
Let us say that a surface $\widetilde{\Sigma}$ is an extension of $\Sigma$ if
$\widetilde{\Sigma}$ admits a parametrization that extends some parametrization of $\Sigma$.
Figure~\ref{ennepprrr} shows how a minimal graph $\Sigma$ (in the upper left corner) extends
to a minimal surface $\widetilde{\Sigma}$ that not only fails to be a graph, but also has self-intersections.

\begin{theorem}\label{nongraph}
Let $\Sigma$, $\sigma$ and $K$ be as in Theorem~\ref{graph}, and let $\widetilde{\Sigma}$ be a doubly connected
minimal surface that extends $\Sigma$. If the image of $\widetilde{\Sigma}$ is still  contained
in the cylinder
\[\{(u,v,w)\in\R^3\colon 1< u^2+v^2 < \sigma^2\}\]
then
\begin{equation}\label{nongraph1}
\Mod \widetilde{\Sigma} \le \log\frac{K\sigma +\sqrt{K^2\sigma^2-K^2+1}}{K+1},
\end{equation}
Equality is attained  if $\widetilde{\Sigma}$ is a catenoidal slab $F=(h,w)$ with
\[
h(z)=\frac{K+1}{2K}\,z+\frac{K-1}{2K}\frac{1}{\bar z}, \qquad
w(z)=\frac{\sqrt{K^2-1}}{K}\log\abs{z}
\]
\end{theorem}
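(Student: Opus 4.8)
\emph{Proof of Theorem~\ref{nongraph} (proposal).}
The plan is to deduce Theorem~\ref{nongraph} from Theorem~\ref{th34}, applied to the complex coordinate of a normalized isothermal parametrization of $\widetilde\Sigma$. First I would fix an isothermal parametrization $\widetilde F=(\widetilde h,\widetilde w)\colon A(1,\widetilde R)\to\C\times\R$ of $\widetilde\Sigma$, with inner radius normalized to $1$; then $\Mod\widetilde\Sigma=\log\widetilde R$ and $\widetilde h$ is harmonic in $A(1,\widetilde R)$. Since $\widetilde\Sigma$ extends the minimal graph $\Sigma$ over the half-circular annulus $\As(1,\cdot)$ whose inner boundary is $\T$, after a conformal change of the parameter (which leaves $\Mod\widetilde\Sigma$ unchanged) I may assume that the restriction of $\widetilde F$ to a collar $A[1,R_0]$, $1<R_0<\widetilde R$, is exactly the graph parametrization $z\mapsto\bigl(\widetilde h(z),f(\widetilde h(z))\bigr)$ of a neighborhood of $\partial_I\Sigma$. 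In particular $\widetilde h$ is $\mathscr C^1$ on $A[1,R_0]$ and $\widetilde h\colon\T\leftrightarrows\T$ is an orientation preserving homeomorphism.

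Next I would verify the distortion inequality~\eqref{311} for $\widetilde h$ on $\T$. Along the collar $\widetilde w=f\circ\widetilde h$, so the vertical component $\tau$ of the Gauss map of $\widetilde\Sigma$ equals $(1+\abs{\nabla f}^{2})^{-1/2}$; on the other hand the formula for $N$ in Section~2 gives $\tau=\frac{\abs{\widetilde h_z}-\abs{\widetilde h_{\bar z}}}{\abs{\widetilde h_z}+\abs{\widetilde h_{\bar z}}}$. For $z\in\T$ we have $\widetilde h(z)\in\T$, so $\abs{\nabla f(\widetilde h(z))}^{2}\le\max_{u^{2}+v^{2}=1}\abs{\nabla f}^{2}=K^{2}-1$, hence $\tau\ge1/K$, which is precisely $\norm{D\widetilde h(z)}^{2}\le K\,J(z,\widetilde h)$ on $\T$. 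Thus for every $R'\in(1,\widetilde R)$ the restriction $\widetilde h\colon A[1,R']\to\C$ meets all the hypotheses of Theorem~\ref{th34}, which gives
\[
\frac{K+1}{2K}\,R'+\frac{K-1}{2K}\,\frac{1}{R'}\ \le\ \sup_{\abs z=R'}\abs{\widetilde h(z)}.
\]

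Now I would bring in the cylinder constraint: the image of $\widetilde\Sigma$ lies in $\{1<u^{2}+v^{2}<\sigma^{2}\}$, so $\abs{\widetilde h(z)}<\sigma$ throughout $A(1,\widetilde R)$, whence $\sup_{\abs z=R'}\abs{\widetilde h(z)}\le\sigma$. Combining with the previous display, $\frac{K+1}{2K}R'+\frac{K-1}{2K}\frac{1}{R'}\le\sigma$ for every $R'<\widetilde R$; since the left-hand side is increasing in $R'$ on $[1,\infty)$ (its derivative $\frac{K+1}{2K}-\frac{K-1}{2K(R')^{2}}$ is $\ge\frac1K>0$ for $R'\ge1$), letting $R'\uparrow\widetilde R$ yields $(K+1)\widetilde R^{2}-2K\sigma\widetilde R+(K-1)\le0$, and solving this quadratic inequality forces
\[
\widetilde R\ \le\ \frac{K\sigma+\sqrt{K^{2}\sigma^{2}-K^{2}+1}}{K+1},
\]
which is~\eqref{nongraph1}. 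For the equality assertion I would check directly that the catenoidal slab in the statement is a minimal surface (harmonicity of $h$ and $w$ is clear, and $h_z\overline{h_{\bar z}}+w_z^{2}\equiv0$ is a one-line computation), that near $\T$ it is the graph of $w=\frac{\sqrt{K^{2}-1}}{K}\log\abs z$ over $\xi$ via $\abs\xi=\frac{K+1}{2K}\abs z+\frac{K-1}{2K\abs z}$, whence $\abs{\nabla f}\equiv\sqrt{K^{2}-1}$ on $\abs\xi=1$ and the constant $K$ of the slab is the prescribed one; that $h=h^{1/K}$ in the notation of Section~2 is an orientation preserving harmonic self-homeomorphism of $\C\setminus\overline{\DD}$, hence injective on $A(1,\widetilde R)$; and that with $\widetilde R=\frac{K\sigma+\sqrt{K^{2}\sigma^{2}-K^{2}+1}}{K+1}$ one has $\max_{\abs z=\widetilde R}\abs{h(z)}=\frac{K+1}{2K}\widetilde R+\frac{K-1}{2K\widetilde R}=\sigma$, so the slab sits snugly in the cylinder and every inequality above becomes an equality.

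The step I expect to be the main obstacle is the first one: turning the hypothesis that $\widetilde\Sigma$ extends $\Sigma$ into a concrete isothermal parametrization of $\widetilde\Sigma$ whose inner parametric circle is mapped homeomorphically onto $\T$ with distortion at most $K$. Isothermal parameters are unique only up to a conformal automorphism of the annulus followed by a rotation, and one must rule out the degenerate possibilities that the graph collar of $\Sigma$ sits homotopically trivially inside the parameter domain of $\widetilde\Sigma$, or abuts its outer rather than its inner boundary. Once this bookkeeping is settled, the rest is the quantitative estimate above together with the routine verification of the extremal example; in particular the smoothness of $\widetilde h$ up to the outer circle $\T_{\widetilde R}$ is never used, because Theorem~\ref{th34} is invoked only on the interior collars $A[1,R']$ and the resulting bound is then passed to the limit $R'\uparrow\widetilde R$.
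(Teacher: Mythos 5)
Your proposal is correct and follows the same route as the paper: normalize the isothermal parametrization so the inner circle is mapped homeomorphically onto $\T$, observe that the definition of $K$ forces the distortion bound~\eqref{311} on $\T$, feed the cylinder constraint $\sup\abs{\widetilde h}\le\sigma$ into~\eqref{312}, and solve the resulting quadratic in $\widetilde R=e^{\Mod\widetilde\Sigma}$. Your extra care — the explicit $\tau=(1+\abs{\nabla f}^2)^{-1/2}\ge 1/K$ computation, the application of Theorem~\ref{th34} on interior collars $A[1,R']$ with $R'\uparrow\widetilde R$ to avoid assuming $\mathscr C^1$ regularity at the outer boundary, and the verification of the catenoidal extremal — spells out details the paper's one-paragraph proof leaves implicit.
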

\begin{proof}
Let $R= \Mod  \widetilde{\Sigma}$. Let $F \colon A[1,R] \to  \C \times \R$ be a isothermal parametrization of $ \widetilde{\Sigma}$ such that $F=(h,w)$ where $h$ maps $\T$ homeomorphically onto itself, preserving the orientation. The definition of $K$ implies~\eqref{311}. Inequality~\eqref{312} yields
\[\sigma \ge  \frac{K+1}{2K}\;R+ \frac{K-1}{2K}\;\frac{ 1}{R}\]
Solving for $R$ we arrive at~\eqref{nongraph1}.
\end{proof}

The reader will notice that the minimal surface that arises from the Bj\"orling problem in
Proposition~\ref{cauchyprop} satisfies the assumptions of Theorem~\ref{nongraph} provided that
$h_\circ$ is a sense-preserving self-homeomorphism of $\T$.

\section{Proof of Theorem~\ref{th34}}

Let us first dispose of the easy case $K=1$. Since $h$ is harmonic, its derivative $h_{\bar z}$ is an antianalytic function. The inequality~\eqref{311} implies that $h_{\bar z}$ vanishes on $\T$ and therefore it is identically zero. Thus this case of Theorem~\ref{th34} reduces to a version of Schottky's theorem, see Proposition 3.1 in~\cite{IKO2} and also~\cite{BPC}.

From now on $K>1$. Two integral inequalities for complex harmonic functions will come into play. The first of these inequalities applies for small values of $R$ and relates the integral means of $h$ and its derivatives
to the integral of a nonnegative function over the annulus $\A=A(1,R)$.

\begin{proposition}\label{proplesse}
Let $\lambda>-1$ and $1<R\le 1+\sqrt{3+3\lambda}$.
Suppose $h \colon A[1,R] \to \C$ is a $\CC^1$-smooth mapping that is harmonic in $A(1,R)$. Then
\begin{equation}\label{theidentity}
\begin{split}
 & \hskip-0.2cm \frac{2R^2}{R^2+\lambda} \dashint_{\T_R} \abs{h}^2  - 2\frac{\lambda R^2+1}{(1+\lambda)^2} \dashint_{\T} \abs{h}^2
-2\frac{R^2-1}{1+\lambda} \dashint_{\T} \abs{h} \abs{h}_\rho  \\
&-\frac{2(R-1)^2 (2R+3\lambda+1)}{3(1+\lambda)^2}\;  \dashint_{\T} \im [\bar h \left(h_\theta - i h\right)] \\
&   \ge  \frac{1}{\pi} \iint_{\A} \frac{ (R-\rho)^2 (2R\rho +\rho^2+3\lambda)}{3 \rho^2}          \cdot
\left|\frac{\rho h_\rho-ih_\theta}{\rho^2+\lambda} - \frac{2 \rho^2h}{(\rho^2+\lambda)^2}  \right|^2
\end{split}
\end{equation}
The righthand side vanishes if and only if $h$ is constant multiple of $h^\upsilon$ with
$\upsilon=\frac{1-\lambda}{1+\lambda}$; that is,
\begin{equation}\label{hequal}
h(z)=c\left(z+\frac{\lambda}{\bar z}\right),\quad c\in\C.
\end{equation}
\end{proposition}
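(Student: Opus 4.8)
The plan is to prove the displayed inequality as an \emph{identity} of the form
\[
\text{(linear combination of boundary integrals)} \;=\; \frac{1}{\pi}\iint_{\A} W(\rho)\,\bigl|G(z)\bigr|^2 \;+\; \frac{1}{\pi}\iint_\A (\text{correction}),
\]
where $G(z) = \dfrac{\rho h_\rho - i h_\theta}{\rho^2+\lambda} - \dfrac{2\rho^2 h}{(\rho^2+\lambda)^2}$ is the quantity appearing on the right, and then to show the ``correction'' term is non-negative, so that dropping it yields the asserted ``$\ge$''. First I would switch to polar coordinates $z = \rho e^{i\theta}$ and expand $h$ in its Fourier/Laurent series on each circle. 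Since $h$ is harmonic on $A(1,R)$ and $\CC^1$ up to the boundary, we may write $h(\rho e^{i\theta}) = \sum_{n\in\Z}\bigl(a_n \rho^n + b_n \rho^{-n}\bigr)e^{in\theta} + c\log\rho$ (the $n=0$ term being $a_0 + b_0 + c\log\rho$, and for $n\ne 0$ one groups the $\rho^{|n|}$ and $\rho^{-|n|}$ pieces). Every integral average over a circle $\T_\rho$ then becomes a sum over $n$ of explicit quadratic expressions in $a_n, b_n, c$ with $\rho$-dependent coefficients; by orthogonality of $\{e^{in\theta}\}$ the whole inequality decouples into a family of one-variable inequalities indexed by $n\in\Z$ (plus the logarithmic mode).

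The core of the argument is therefore the mode-by-mode estimate. For each $n$, after the dust settles, the left-hand side minus the right-hand side should be an integral $\int_1^R q_n(\rho)\,d\rho$ (or a boundary-minus-bulk expression) where $q_n(\rho)$ is a polynomial in $\rho$ whose non-negativity must be checked on $[1,R]$ under the hypothesis $R \le 1 + \sqrt{3+3\lambda}$. This is where I expect to exploit the specific form of the weight $(R-\rho)^2(2R\rho+\rho^2+3\lambda)/(3\rho^2)$: it is manifestly non-negative on $[1,R]$ since $\rho>0$ and $\lambda > -1$ force $2R\rho + \rho^2 + 3\lambda \ge 2R + 1 + 3\lambda > 0$ when $\rho \ge 1$, and the factor $(R-\rho)^2$ handles the rest. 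The role of the upper bound $R \le 1+\sqrt{3+3\lambda}$, i.e. $(R-1)^2 \le 3(1+\lambda)$, is presumably to guarantee that the \emph{boundary} coefficient $\frac{2(R-1)^2(2R+3\lambda+1)}{3(1+\lambda)^2}$ multiplying $\dashint_\T \im[\bar h(h_\theta - ih)]$ does not overwhelm the other boundary terms; concretely it should make the relevant discriminant-type quantity in each $q_n$ have the right sign. I would verify the $n$-th inequality by completing the square in $(a_n, b_n)$ so that the ``main square'' reproduces exactly $|G|^2$ integrated against the stated weight, leaving a residual that is a non-negative multiple of something like $((R-1)^2 - \text{const})\cdot(\text{square})$ or vanishes identically by the choice of constants. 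The design of the coefficients $\frac{2R^2}{R^2+\lambda}$, $\frac{2(\lambda R^2+1)}{(1+\lambda)^2}$, $\frac{2(R^2-1)}{1+\lambda}$ strongly suggests the inequality is in fact an exact algebraic identity once the bulk square term is included, so the ``hard part'' is less about cleverness and more about organizing the bookkeeping correctly.

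For the equality characterization: the right-hand side vanishes iff $G(z) \equiv 0$ on $\A$ (the weight being strictly positive on the open annulus). Writing out $G = 0$ as an ODE in $\rho$ for each Fourier mode, $\rho\,(h_n)_\rho - i (h_n)_\theta = \frac{2\rho^2}{\rho^2+\lambda} h_n$ — wait, more carefully: $\rho h_\rho - i h_\theta$ applied to the mode $e^{in\theta}$ multiplies the radial part by a first-order operator, and setting $G=0$ forces each mode to satisfy $\frac{\rho (h_n)'(\rho) + n\, h_n(\rho)}{\rho^2+\lambda} = \frac{2\rho^2 h_n(\rho)}{(\rho^2+\lambda)^2}$. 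Combined with harmonicity (which already restricts $h_n(\rho)$ to the span of $\rho^n, \rho^{-n}$, or $\rho, \rho^{-1}, \log\rho$ for the relevant modes), this linear ODE constraint should kill all modes except $n = \pm 1$ and pin down the surviving one to be proportional to $\rho - \lambda/\rho$ in the $\rho^{-1}$ part, i.e. force $h(z) = c(z + \lambda \bar z^{-1})$; note $\upsilon = \frac{1-\lambda}{1+\lambda}$ makes $c(z+\lambda\bar z^{-1}) = c(1+\lambda)h^\upsilon(z)$ up to the normalization of $h^\upsilon$. I would present this by substituting the ansatz $h = a_1 z + b_{-1}\bar z^{-1}$ into $G$, computing $G$ explicitly, and reading off that $G\equiv 0 \iff b_{-1} = \lambda a_1$, then separately checking that any other active mode produces a nonzero contribution to $G$ on some subinterval of $(1,R)$. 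The main obstacle throughout is purely computational stamina: correctly expanding four boundary averages and one area integral in Laurent coefficients and recognizing the perfect-square structure; I anticipate no conceptual difficulty beyond choosing good intermediate variables (e.g.\ working with $p_n = a_n + b_n$ and $q_n = a_n - b_n$, which are what appear naturally in $h_\theta$ and $h_\rho$ on $\T$).
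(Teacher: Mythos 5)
Your plan --- expanding $h$ in a Laurent-type series and verifying the inequality mode by mode --- is a genuine alternative to the route the paper actually takes for Proposition~\ref{proplesse}. The paper introduces the auxiliary function $g = h/h^\upsilon$ with $h^\upsilon(z) = \tfrac{1}{1+\lambda}\bigl(z + \lambda/\bar z\bigr)$, rewrites the three ``boundary'' terms as a double integral quadratic in $g_z,g_{\bar z}$ by invoking an identity from~\cite{IKO}, then splits the radial weight as $\alpha - \tfrac{\dtext\beta}{\dtext\rho}$, integrates by parts twice, and finishes with the Cauchy--Schwarz and Wirtinger inequalities; the condition $R\le 1+\sqrt{3+3\lambda}$ enters precisely to make $\alpha\ge 0$, and via~\eqref{gzgz} the bulk density $|G|^2$ is nothing but a constant multiple of $|g_z|^2$. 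Your Fourier-coefficient route is essentially the method the paper uses for the companion Proposition~\ref{bigrprop} ($R\ge\sqrt7$), which the authors explicitly describe as making the present proof ``appear short and elegant'' by contrast --- meaning the coefficient bookkeeping is substantially heavier. To make your plan precise you need two observations you did not state: first, $\dashint_\T\abs{h}\abs{h}_\rho$ is quadratic in the coefficients only because $\abs{h}\abs{h}_\rho=\tfrac12(\abs{h}^2)_\rho$; second, the modewise obstruction is not a scalar inequality ``$q_n(\rho)\ge0$'' but positive semi-definiteness of a $2\times2$ Hermitian form $Q_n(a_n,b_n)$ in the Laurent coefficients, and verifying $A_nB_n\ge C_n^2$ with $A_n,B_n\ge0$ under $R\le1+\sqrt{3+3\lambda}$ is a nontrivial elementary computation that you have not carried out (for Proposition~\ref{bigrprop} it occupies several pages). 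Your equality analysis is correct in outline: $G\equiv0$ forces each mode to satisfy $\rho h_n'+nh_n=\tfrac{2\rho^2}{\rho^2+\lambda}h_n$, whose general solution $C_n(\rho^2+\lambda)\rho^{-n}$ is compatible with harmonicity only when $n=1$ (not ``$n=\pm1$''), yielding $h=c(z+\lambda/\bar z)$; the paper's version is slicker, since $g_z\equiv0$ together with $0=h_{z\bar z}=h^\upsilon_zg_{\bar z}$ and $h^\upsilon_z$ a nonzero constant forces $g$ to be constant. In short, your approach is viable and uses only elementary tools, but trades the paper's short substitution argument for a modal positivity check that is likely to be lengthy and that the proposal leaves undone.
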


Proposition~\ref{proplesse} will be used with $\lambda=\frac{K-1}{K+1}> 0$. Hence, it is applicable whenever $R \le 1 + \sqrt{3}$; the remaining values of $R$ are covered by the following result.

\begin{proposition}\label{bigrprop}
Let $0<\lambda \le 1$ and $R \ge \sqrt{7}$. Suppose $h \colon A[1,R] \to \C$ is a $\CC^1$-smooth mapping that is harmonic in $A(1,R)$.
 Denote by $f\colon \overline{\DD}\to\C$  the harmonic extension of $h$ to the closed unit disk. Then for all
$\sqrt{7}\le \rho\le R$ we have
\begin{equation}\label{trueforall}
\begin{split}
\dashint_{\T_\rho}\abs{h}^2 & -\left(\frac{\rho^2+\lambda}{(1+\lambda)\rho} \right)^2\dashint_{\T}\im (\bar h h_\theta)  - 2\dashint_{\T} \left\{\abs{h}\abs{h}_\rho -\frac{1-\lambda}{1+\lambda} \im (\bar h h_\theta)\right\}  \\ &  -\frac{\rho^2-(3+\lambda)-\lambda\rho^{-2}}{\lambda(1+\lambda)}\dashint_{\T} (\lambda^2\abs{h_z}^2-\abs{h_{\bar z}}^2)\\ &  -\frac{\rho^2-(3+\lambda)-\lambda\rho^{-2}}{(1+\lambda)} \left[\int_{\T}  J_f-  \iint_{\mathbb D} \abs{Df}^2\right]\ge 0
\end{split}
\end{equation}
The equality occurs if and only if $h$ is given by~\eqref{hequal}.
\end{proposition}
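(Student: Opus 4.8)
The plan is to diagonalize every term in \eqref{trueforall} against the Fourier basis $\{e^{in\theta}\}_{n\in\Z}$ and thereby reduce the asserted inequality to a family of scalar inequalities, one per frequency. Since $h$ is $\CC^1$ up to $\T$ and harmonic in $A(1,R)$, it admits on $A[1,R]$ the absolutely convergent expansion
\[
h(\rho e^{i\theta})=a_0+b_0\log\rho+\sum_{n\neq 0}\bigl(a_n\rho^{\,n}+b_n\rho^{-n}\bigr)e^{in\theta},\qquad a_n,b_n\in\C ,
\]
and its harmonic extension $f$ to $\overline{\DD}$ has the same boundary Fourier coefficients $A_n:=a_n+b_n$ for $n\neq 0$ and $A_0:=a_0$, but with purely power radial parts $A_n\rho^{|n|}e^{in\theta}$. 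A Parseval computation then rewrites each of $\dashint_{\T_\rho}\abs h^2$, $\dashint_{\T}\im(\bar h\,h_\theta)$, $\dashint_{\T}\abs h\,\abs h_\rho$, $\dashint_{\T}(\lambda^2\abs{h_z}^2-\abs{h_{\bar z}}^2)$ and $\int_{\T}J_f-\iint_{\DD}\abs{Df}^2$ as a sum over $n$ of quadratic forms in $(a_n,b_n)$ with coefficients rational in $\rho$, the logarithm entering only the $n=0$ slot; in particular $\int_{\T}J_f-\iint_{\DD}\abs{Df}^2$ depends only on the $\abs{A_n}^2$ and carries zero coefficient at the frequencies $n=0$ and $n=1$. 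Consequently the left side of \eqref{trueforall} equals $\sum_{n\in\Z}I_n(\rho)$, and it suffices to prove $I_n(\rho)\ge 0$ for every $n$, all $\rho\ge\sqrt7$, and all $0<\lambda\le 1$, together with the correct equality pattern.

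For $\abs n\ge 2$ the estimate is soft: $\dashint_{\T_\rho}\abs h^2$ contributes $\abs{A_n(\rho)}^2$ to $I_n$, which carries a power $\rho^{2\abs n}\ge\rho^4\ge 49$, whereas every other contribution to $I_n$ is $O(\rho^2)$. Checking that the coefficient of $\rho^2\abs{b_n}^2$ in the remainder is in fact positive (here one uses $\lambda\le 1$ and the sign of the $J_f$-term on the high frequencies), absorbing the cross terms $a_n\bar b_n$ by Cauchy--Schwarz, and exploiting that $\sqrt7$ is a fixed number, one obtains $I_n(\rho)>0$ unless $a_n=b_n=0$. The frequency $n=0$ reduces to a binary Hermitian form in $(a_0,b_0)$ whose nonnegativity reduces to the inequality $\tfrac14 P(\rho)(1-\lambda^2)\ge 1-2\log\rho$, where $P(\rho):=\dfrac{\rho^{2}-(3+\lambda)-\lambda\rho^{-2}}{\lambda(1+\lambda)}>0$; this holds because $\log\rho\ge\tfrac12\log 7>\tfrac12$, strictly, so that $I_0$ is positive definite and vanishes only at $a_0=b_0=0$.

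The crux is the pair $n=\pm1$, where $\dashint_{\T_\rho}\abs h^2$ contributes a term of order $\rho^2$ and so competes on equal footing with the subtracted ones, making the inequality genuinely sharp. The $J_f$-term has no frequency-$1$ component and a favourable sign at frequency $-1$, so collecting the relevant contributions leaves, for each sign, a $2\times2$ Hermitian form $I_{\pm1}(\rho)$ in $(a_{\pm1},b_{\pm1})$ with entries that are explicit rational functions of $\rho$ and $\lambda$. The task is to show each of these forms is positive semidefinite for $\rho\ge\sqrt7$, $0<\lambda\le 1$, by verifying that its two diagonal entries and its determinant are nonnegative throughout that range; this is the step that actually consumes the threshold $\sqrt7$, and looking only at leading order in $\rho$ will not suffice — the full $\rho$-polynomials must be controlled on $[\sqrt7,\infty)$. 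The computation should further show that $I_{-1}$ is positive definite, while $I_{1}$ is positive semidefinite with one-dimensional kernel exactly the line $b_1=\lambda a_1$ — the frequency-$1$ mode of $h^{\upsilon}$ with $\upsilon=\tfrac{1-\lambda}{1+\lambda}$, equivalently of $z+\lambda\bar z^{-1}$.

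Summing the $I_n$ gives \eqref{trueforall}. Equality there forces $I_n(\rho)=0$ for every $n$, hence $a_n=b_n=0$ for all $n\neq 1$ and $b_1=\lambda a_1$; writing $c=a_1$ this is exactly $h(z)=c\bigl(z+\lambda/\bar z\bigr)$, which is \eqref{hequal}. I expect the main obstacle to be the $n=\pm1$ step: one must push all the contributions through with no sign or bookkeeping slip, establish positive semidefiniteness of the two $2\times2$ forms on the precise parameter range $\rho\ge\sqrt7$, $0<\lambda\le 1$ rather than merely asymptotically, and confirm that the only degeneracy — at frequency $1$ — sits along $b_1=\lambda a_1$.
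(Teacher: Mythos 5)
Your Fourier-decomposition strategy is exactly the one the paper uses: expand $h$ in the Laurent-type harmonic series $h=a_0\log\abs{z}+b_0+\sum_{n\neq0}(a_nz^n+b_n\bar z^{-n})$, use Parseval to turn each term of \eqref{trueforall} into a sum of scalar quadratic forms $Q_n(a_n,b_n)$, and prove each $Q_n$ is positive semidefinite, tracking the kernel. At that level your plan is correct, and your identification of the equality locus (only $n=1$ survives, along the line $b_1=\lambda a_1$) matches the paper.

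However, you have \emph{not} proved the proposition; you have reduced it to assertions that you explicitly defer, and two of those assertions misidentify where the difficulty sits.

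First, you call the case $\abs n\ge 2$ ``soft'' on the grounds that $\rho^{2\abs n}\ge 49$ dominates $O(\rho^2)$ contributions. This is not uniform: the subtracted term carries coefficients like $n^2\lambda^2$ and $\lambda\abs n(n-1)$ multiplying $\rho^2$, so it grows polynomially in $n$. The exponential $\rho^{2\abs n}$ does win eventually, but the margin is tight for small $\abs n$: the paper has to keep the three estimates \eqref{Aineq1}--\eqref{Dineq1} uniformly for $n\ge 3$, and the case $n=2$ cannot be folded into them and gets its own separate argument with a $t=\rho^2-7$ substitution. Likewise the entire negative range $n\le -1$ requires the separate block \eqref{Aineq2}--\eqref{Dineq2}. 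Calling all of this ``soft'' sweeps the bulk of the proof under the rug.

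Second, and conversely, you call $n=\pm1$ ``the crux.'' In the paper $Q_1$ turns out to be the \emph{easiest} frequency: it factors identically as
\[
Q_1(\xi,\zeta)=\frac{\rho^2-(3-\lambda^2)+\lambda^2\rho^{-2}}{\lambda(1+\lambda)^2}\,\abs{\lambda\xi-\zeta}^2,
\]
which is manifestly positive semidefinite once $\rho^2\ge 7$, with kernel exactly $\zeta=\lambda\xi$. There is no $2\times2$ determinant to control at $n=1$; the sharpness is built in by the algebra. The hard work lives at $n=2$ and $n\le-1$. So the sentence ``establish positive semidefiniteness of the two $2\times2$ forms on the precise parameter range $\rho\ge\sqrt7$, $0<\lambda\le1$'' postpones the genuine content for the wrong frequencies, while the frequencies that genuinely require the threshold $\sqrt 7$ are dismissed as routine. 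Until you actually produce the determinant inequalities $A_nB_n>C_n^2$ across $n\ge 2$ and $n\le-1$ (this is most of the paper's Section 5), the argument is incomplete.
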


Lengthy computations for the  proofs of Propositions~\ref{proplesse} and~\ref{bigrprop} are given in  Section~\ref{propproofs}.

We now proceed to prove Theorem~\ref{th34}. We make frequent use of polar coordinates $z=\rho e^{i\theta}$.
Our first step is to prove  the inequality
\begin{equation}\label{texasrh}
\abs{h(z)}_\rho \ge  \frac{1}{K} \abs{h_\theta (z)}, \qquad z \in \T
\end{equation}
For this we compute the Jacobian $J_h(z)=\det Dh(z)$ for $z\in \T$ as follows.
\begin{equation*}
J_h=\im (\overline {h_\rho} h_\theta)
=\im \big[(\overline{ h_\rho} h) (\overline{h} h_\theta)\big]=\abs{h_\theta}\re (\overline{ h_\rho} f)=\abs{h_\theta}\abs{h}_{\rho}.
\end{equation*}
Combining this with~\eqref{311} we have
\[\abs{h_\theta}\abs{h}_{\rho} \ge \frac{1}{K} \norm{Dh}^2 \ge \frac{1}{K} \abs{h_\theta}^2  \]
which implies~\eqref{texasrh}.
Since the winding number $h$ on $\T$ is $1$, it follows
\begin{equation}\label{winding}
\dashint_{\T}\im (\bar h h_\theta) =1
\end{equation}

{\bf Case 1.} $R \le 1 + \sqrt{3}$. We set $\lambda=\frac{K-1}{K+1}$ in Proposition~\ref{proplesse}  and observe that
\begin{enumerate}[(i)]
\item $\ds \dashint_{\T} \abs{h}^2 =1$
\item $\ds \dashint_{\T} \abs{h} \abs{h}_\rho  \ge \frac{1}{K}=\frac{1-\lambda}{1+\lambda}$ by~\eqref{texasrh}
\item $\ds   \dashint_{\T} \im [\bar h \left(h_\theta - i h\right)] =0 $ by~\eqref{winding}
\end{enumerate}
Hence,~\eqref{theidentity} implies
\[ \frac{2R^2}{R^2+\lambda} \dashint_{\T_R} \abs{h}^2  \ge  2\frac{\lambda R^2+1}{(1+\lambda)^2} +2\frac{R^2-1}{1+\lambda} \frac{1-\lambda}{1+\lambda} = 2 \frac{R^2+\lambda}{(1+\lambda)^2} \]
from which~\eqref{312} follows.

Suppose the equality holds in~\eqref{312}. Then the righthand side of~\eqref{theidentity} must be also zero. Thus
$h$ is a constant multiple of $h^{\upsilon}$ with $\upsilon=1/K$. This finishes Case~$1$.

{\bf Case 2.} $R \ge \sqrt{7}$. Let $f$ be as in Proposition~\ref{bigrprop}.  We set $\lambda=\frac{K-1}{K+1}>0$ and observe that
\begin{enumerate}[(i)]
\item $\ds \dashint_{\T} \left\{\abs{h}\abs{h}_\rho -\frac{1-\lambda}{1+\lambda} \im (\bar h h_\theta)\right\} \ge 0$  by~\eqref{texasrh}
\item $\ds \dashint_{\T} (\lambda^2\abs{h_z}^2-\abs{h_{\bar z}}^2)$ by~\eqref{311}
\item $\ds  \int_{\T}J_f-\iint_{\mathbb D} \abs{Df}^2 \ge 0$
\end{enumerate}
The latter has been established in~\cite[Theorem 1.7]{IKO2} under the name Jacobian-Energy inequality, which we recall follows.
\begin{theorem}
Let a harmonic homeomorphism $f \colon \overline{\mathbb D}  \overset{\textnormal{\tiny{onto}}}{\longrightarrow} \overline{\mathbb D}$ be $\mathscr C^1$-smooth in the closed unit disk $\overline{\mathbb D} = \{z \in \C \colon \abs{z} \le 1\}$. Then
\begin{equation}\label{th3form}
\int_{\T} \abs{\det Df} \ge \iint_{\mathbb D} \abs{Df}^2
\end{equation}
The  inequality is strict unless  $f$ is an isometry.
\end{theorem}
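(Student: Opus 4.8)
The plan is to turn both sides of~\eqref{th3form} into line integrals over $\T$ and then reduce the assertion to an inequality between the Fourier coefficients of the boundary homeomorphism. Replacing $f$ by $\bar f$ if necessary we may assume $f$ is orientation preserving, so by Lewy's theorem $f$ is a diffeomorphism of $\DD$ onto itself and $J_f>0$ in $\DD$. On $\T$ the identity $\partial_\theta\abs{f}^2=0$ forces $f_\theta=i\abs{f_\theta}f$, and since $\abs{f}\le 1$ in $\DD$ with $\abs{f}\equiv 1$ on $\T$ the radial derivative obeys $\abs{f}_\rho\ge 0$ on $\T$; hence, with $z=\rho e^{i\theta}$,
\[
\det Df=\im(\overline{f_\rho}\,f_\theta)=\abs{f_\theta}\,\re(\overline{f_\rho}\,f)=\abs{f_\theta}\,\abs{f}_\rho\ge 0\qquad\text{on }\T .
\]
Thus $\int_\T\abs{\det Df}=\int_\T\abs{f_\theta}\,\abs{f}_\rho\,d\theta$, while Green's formula applied to the harmonic coordinates of $f$, together with $\abs{f}\equiv 1$ on $\T$, gives $\iint_{\DD}\abs{Df}^2=\int_\T\re(\bar f f_\rho)\,d\theta=\int_\T\abs{f}_\rho\,d\theta$. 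So~\eqref{th3form} is equivalent to the positive-correlation inequality $\int_\T(\abs{f_\theta}-1)\,\abs{f}_\rho\,d\theta\ge 0$; note that $\int_\T\abs{f_\theta}\,d\theta$ is the length of the once-traversed curve $f(\T)=\T$, so it equals $2\pi$ and $\int_\T(\abs{f_\theta}-1)\,d\theta=0$.

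To make this concrete, expand $f(e^{i\theta})=\sum_{n\in\Z}c_n e^{in\theta}$, so that the harmonic extension is $f(z)=\sum_{n\ge0}c_n z^n+\sum_{n\ge1}c_{-n}\bar z^n$. Inserting this into the two boundary integrals above (and using $\det Df=\abs{f_z}^2-\abs{f_{\bar z}}^2$ on $\T$) yields $\iint_{\DD}\abs{Df}^2=2\pi\sum_n\abs{n}\abs{c_n}^2$ and $\int_\T\det Df\,d\theta=2\pi\sum_n n\abs{n}\abs{c_n}^2$, whence the theorem becomes
\[
\sum_{k\ge1}k(k-1)\abs{c_k}^2\ \ge\ \sum_{k\ge1}k(k+1)\abs{c_{-k}}^2 .
\]
This last inequality is \emph{false} for a general degree-one boundary map, so it must use that $f$ is a homeomorphism of $\overline{\DD}$ -- equivalently that $f(e^{i\theta})=e^{i\alpha(\theta)}$ with $\alpha$ strictly increasing and $\alpha(\theta+2\pi)=\alpha(\theta)+2\pi$, i.e. that the unimodularity relations $\sum_m c_m\overline{c_{m-n}}=\delta_{n0}$ hold.

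I would exploit injectivity through the analytic decomposition $f=\phi+\bar\psi$ with $\phi(z)=\sum_{n\ge0}c_n z^n$ and $\psi(z)=\sum_{n\ge1}\overline{c_{-n}}z^n$: the inequality then reads $\int_\T(\abs{\phi'}^2-\abs{\psi'}^2)\,d\theta\ge 2\iint_{\DD}(\abs{\phi'}^2+\abs{\psi'}^2)$, and since $J_f=\abs{\phi'}^2-\abs{\psi'}^2>0$ the dilatation $\omega=\psi'/\phi'$ is a holomorphic self-map of $\DD$, so with $\psi'=\omega\phi'$ one must show
\[
\int_\T(1-\abs{\omega}^2)\abs{\phi'}^2\,d\theta\ \ge\ 2\iint_{\DD}(1+\abs{\omega}^2)\abs{\phi'}^2 .
\]
The elementary bound $2\iint_{\DD}S\le\int_\T S$, valid for any nonnegative subharmonic $S$ because its circular means increase with the radius, handles the pure $\phi$-terms with slack exactly $2\pi\sum_k k(k-1)\abs{c_k}^2$; the remaining and hardest step -- the one I expect to be the main obstacle -- is to show that this slack dominates $2\pi\sum_k k(k+1)\abs{c_{-k}}^2$. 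The pointwise bound $\abs{\psi'}<\abs{\phi'}$ in $\DD$ does not suffice here, and one must bring in the normalization $\abs{\phi+\bar\psi}\equiv 1$ on $\T$ via a Schwarz-Pick/subordination argument that bounds how much of $\phi$ can be cancelled by $\psi$ on the circle. Finally, equality forces $2\iint_{\DD}\abs{\phi'}^2=\int_\T\abs{\phi'}^2$, hence $\phi'$ constant, and all the $\psi$-terms to vanish, so $f$ is affine and maps $\T$ onto $\T$; that is, $f$ is a rotation of $\overline{\DD}$, an isometry.
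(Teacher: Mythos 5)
Your reductions are correct up to the point you yourself flag as the ``main obstacle,'' and then the argument stops. In more detail: the boundary identities $\det Df=\abs{f_\theta}\,\abs{f}_\rho\ge 0$ on $\T$, the Green/Dirichlet identity $\iint_{\DD}\abs{Df}^2=\int_{\T}\abs{f}_\rho\,d\theta$ (using $\abs{f}\equiv 1$ on $\T$), and the Fourier computations giving $\iint_{\DD}\abs{Df}^2=2\pi\sum\abs{n}\abs{c_n}^2$ and $\int_{\T}\det Df\,d\theta=2\pi\sum n\abs{n}\abs{c_n}^2$ are all right, and they faithfully reduce the theorem to
\[
\sum_{k\ge 1}k(k-1)\abs{c_k}^2\ \ge\ \sum_{k\ge 1}k(k+1)\abs{c_{-k}}^2
\]
under the unimodularity constraints $\sum_m c_m\overline{c_{m-n}}=\delta_{n0}$ and the orientation condition. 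But the argument never proves this. The decomposition $f=\phi+\bar\psi$, the subharmonic mean inequality $2\iint_{\DD}\abs{\phi'}^2\le\int_{\T}\abs{\phi'}^2$, and the resulting ``slack'' computation are again just a repackaging of the same coefficient inequality; and your observation that the pointwise bound $\abs{\omega}<1$ does not suffice is exactly why the step you leave open is the whole content of the theorem. The concluding sentence (``one must bring in the normalization $\abs{\phi+\bar\psi}\equiv1$ on $\T$ via a Schwarz--Pick/subordination argument'') is a proposal, not a proof, and you say so explicitly. Consequently the equality analysis is also unsupported: without a concrete chain of inequalities whose intermediate steps all saturate, one cannot conclude that equality forces $\phi'$ constant and $\psi\equiv 0$.

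Note also that there is no in-paper proof to compare against: the present paper imports this statement verbatim from reference [IKO2], where it is Theorem 1.7 (the ``Jacobian--Energy inequality''). So completing your argument would require importing or reconstructing the genuinely new idea from that source at the step you flagged --- some estimate that actually exploits that the boundary data $\theta\mapsto f(e^{i\theta})$ is a homeomorphism of $\T$, not merely a degree-one map, since you correctly observe the coefficient inequality is false in the latter generality.
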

Now,~\eqref{trueforall} yields
\[\dashint_{\T_R}\abs{h}^2 \ge \left(\frac{R^2+\lambda}{(1+\lambda)R} \right)^2  \]
which is~\eqref{312}.

If equality holds in~\eqref{312}, then it also holds in~\eqref{trueforall}. Thus
$h$ is a constant multiple of $h^{\upsilon}$ with $\upsilon=1/K$. This completes the proof of Theorem~\ref{th34}
modulo the two propositions.
\qed

\section{Proof of Propositions~\ref{proplesse} and~\ref{bigrprop}}\label{propproofs}

\begin{proof}[Proof of Proposition~\ref{proplesse}]
Inequality~\eqref{theidentity} is best interpreted in terms of integral means
\[U(\rho) = \dashint_{\T_\rho} \abs{h}^2\]
and the derivative of $U$, denoted $\dot{U}$. Indeed, the first three terms in~\eqref{theidentity} are nothing but
\begin{equation}\label{first3}
\frac{2R^2}{R^2+\lambda} U(R)- 2\frac{\lambda R^2+1}{(1+\lambda)^2}U(1)-\frac{R^2-1}{1+\lambda}\dot{U}(1)
\end{equation}
There is a way to express~\eqref{first3} as a double integral over $A(1,R)$, see~(8.5) and~(8.14) in~\cite{IKO}.
This integral takes a simpler form in terms of the function~$g=h/h^\upsilon$.
\begin{equation}\label{equ4}
\begin{split}
\frac{2R^2}{R^2+\lambda} U(R)&- 2\frac{\lambda R^2+1}{(1+\lambda)^2}U(1)-\frac{R^2-1}{1+\lambda}\dot{U}(1)\\
&= \frac{4}{(1+\lambda)^2}\int_1^R \frac{(R^2-\rho^2)(\rho^2+\lambda)}{\rho} \dashint_{\T_\rho} \left(  \abs{g_z}^2 + \abs{g_{\bar z}}^2 \right)\\ &\; \; +  \frac{4}{(1+\lambda)^2}\int_1^R \frac{(R^2-\rho^2)(\rho^2+\lambda)}{\rho^3} \dashint_{\T_\rho}\im \left(  \bar g \, g_\theta \right)
\end{split}
\end{equation}
The next step in~\cite{IKO} was to perform integration by parts in the second term, but here we treat it differently, by splitting it as
\[
\frac{(R^2-\rho^2)(\rho^2+\lambda)}{\rho^3}=\alpha- \frac{\dtext \beta}{\dtext \rho}
\]
where
\[\begin{split}
\alpha&= \frac{(R-\rho)(2\rho^2+2R\rho+3\lambda-R^2)}{3\rho^2} \\
\beta&=  \frac{(R-\rho)^2 (2R\rho +\rho^2+3\lambda)}{6\rho^2}
\end{split} \]
The term with $\beta$ is integrated by parts,
\[
\begin{split}
\int_1^R - \frac{\dtext \beta}{\dtext \rho} \dashint_{\T_\rho}\im \left(  \bar g \, g_\theta \right) & = \int_1^R \beta  \frac{\dtext}{\dtext \rho} \dashint_{\T_\rho}\im \left(  \bar g \, g_\theta \right) \\ &+ \frac{(R-1)^2(2R+1+3\lambda)}{6} \dashint_{\T} \im \left( \bar g \, g_\theta \right)
\end{split}
\]
Another integration by parts, this time along $\T_\rho$, gives
\[
\begin{split}
\frac{\dtext}{\dtext \rho} \dashint_{\T_\rho} \im (\bar g \, g_\theta) &= \im \dashint_{\T_\rho} \left( \bar g_\rho g_\theta + \bar g g_{\rho \theta} \right) \\
&= \im \dashint_{\T_\rho} \left( \bar g_\rho g_\theta - \bar g_\theta  g_{\rho} \right) = 2 \rho \dashint_{\T_\rho} \left(  \abs{g_z}^2 - \abs{g_{\bar z}}^2\right)
\end{split}
\]
The condition $R \le 1+ \sqrt{3+3\lambda}$ ensures that $\alpha \ge 0$. The integral with $\alpha$ can be estimated
using the Cauchy-Schwarz and Wirtinger inequalities,
\begin{equation}\label{alpha}
\begin{split}
\bigg|\dashint_{\T_\rho}\im ( \bar g \, g_\theta) \bigg|
&\le \bigg(\dashint_{\T_\rho} \bigg|g-\dashint_{\T_\rho} g\bigg|^2\bigg)^{1/2} \bigg(\dashint_{\T_\rho}\abs{g_\theta}^2\bigg)^{1/2}\\
&\le \dashint_{\T_\rho}\abs{g_\theta}^2
 \le 2\rho^2 \dashint_{\T_\rho}\left(  \abs{g_z}^2 + \abs{g_{\bar z}}^2\right)
\end{split}
\end{equation}
Therefore, the righthand of~\eqref{equ4} is estimated from below as
\[
\begin{split}
&\frac{4}{(1+\lambda)^2}\int_1^R  \left[\frac{(R^2-\rho^2)(\rho^2+\lambda)}{\rho} +  2\rho  \beta
- 2 \rho^2 \alpha \right] \dashint_{\T_\rho} \abs{g_z}^2\\
&  + \frac{4}{(1+\lambda)^2}\int_1^R  \left[\frac{(R^2-\rho^2)(\rho^2+\lambda)}{\rho} -    2\rho  \beta - 2 \rho^2 \alpha \right] \dashint_{\T_\rho} \abs{g_{\bar z}}^2\\
&\hskip2cm  +\frac{4}{(1+\lambda)^2}\frac{(R-1)^2(2R+1+3\lambda)}{6} \dashint_{\T} \im \left( \bar g \, g_\theta \right)
\end{split}
\]
which simplifies as
\[
\begin{split}
\frac{4}{(1+\lambda)^2}&\int_1^R  \frac{2 (R-\rho)^2 (2R\rho +\rho^2+3\lambda)}{3 \rho} \dashint_{\T_\rho} \abs{g_z}^2 \\
&+\frac{2(R-1)^2 (2R+3\lambda+1)}{3(1+\lambda)^2} \dashint_{\T} \im \left( \bar g \, g_\theta \right)
\end{split}
\]
To conclude with the equality~\eqref{theidentity} it only remains to observe that
\[
\im \left( \bar g \, g_\theta \right) = \frac{(1+\lambda)^2\rho^2}{(\rho^2+\lambda)^2} \im \big(\bar h h_\theta-i\bar hh\big)
\]
\begin{equation}\label{gzgz}
\abs{g_z} =  \frac{1+\lambda}{2}\left|\frac{\rho h_\rho-ih_\theta}{\rho^2+\lambda} - \frac{2 \rho^2h}{(\rho^2+\lambda)^2}  \right|
\end{equation}
This finishes the proof of Proposition~\ref{proplesse}, except for the equality statement.
The righthand side of~\eqref{theidentity} vanishes if and only if $g_z\equiv 0$, due to~\eqref{gzgz}.
Recall that $h=h^\upsilon g$ where both $h$ and $h^\upsilon$ are harmonic. Hence
\[0\equiv h_{z\bar z}=(h^\upsilon_z g)_{\bar z}=h^\upsilon_z g_{\bar z}.\]
Since $h^\upsilon_z\equiv (1+\lambda)^{-1}\ne 0$, we have $g_{\bar z}\equiv 0$.
Thus $g$ is a constant function, and the proof is complete.
\end{proof}

Finally we turn to Proposition~\ref{bigrprop}, which will make the preceding proof appear short and elegant.

\begin{proof}[Proof of Proposition~\ref{bigrprop}]
Since $h$ is harmonic in a circular annulus, it admits an expansion
\begin{equation}\label{hseries}
h(z)=a_0\log\abs{z}+b_0+\sum_{n\ne 0}(a_n z^n + b_n \bar z^{-n})
\end{equation}
for some $a_n, b_n\in\C$. Thanks to Parseval's identity the proof reduces to elementary manipulations with Fourier coefficients $a_n, b_n$. Indeed, upon substituting formulas
\begin{equation}\label{mess}
\begin{split}
\dashint_{\T_\rho} \abs{h}^2 &= \abs{a_0 \log \rho +b_0}^2 + \sum_{n \ne 0} \abs{a_n\rho^n + b_n \rho^{-n}}^2;\\
\dashint_{\T} \abs{h}\abs{h}_\rho &= \frac{1}{2}\, \dashint_{\T} \abs{h^2}_\rho =  \re( a_0\bar b_0)+ \sum_{n \ne 0} n(\abs{a_n}^2-\abs{b_n}^2);
\\
\dashint_{\T} \im \left( \bar h \, h_\theta \right) &= \sum_{n \ne 0} n \abs{a_n+b_n}^2; \hskip0.5cm     \dashint_{\T} J_h = \sum_{n \ne 0} n^2(\abs{a_n}^2-\abs{b_n}^2) ;\\
\dashint_{\T} J_f &= \sum_{n \ne 0} n\abs{n} \abs{a_n+b_n}^2; \hskip0.5cm
\iint_{\mathbb D} \abs{Df}^2 = 2\pi \sum_{n \ne 0} \abs{n} \abs{a_n+b_n}^2
\end{split}
\end{equation}
into~\eqref{trueforall} the lefthand side is represented by the quadratic form
\[Q= \sum_{n \in \Z} Q_n (a_n, b_n)\, , \quad Q_n(\xi, \zeta)= A_n \abs{\xi}^2 + B_n \abs{\zeta}^2 + 2 \, C_n \re (\xi \, \bar \zeta)\]
Precisely, we have for $n \ne 0$
\[
\begin{split}
Q_n (\xi, \zeta)&= \abs{\rho^n \xi + \rho^{-n} \zeta}^2- \left[\frac{(\rho+\lambda/\rho)^2}{(1+\lambda)^2 } - 2 \frac{1-\lambda}{1+\lambda}\right] n  \abs{\xi+\zeta}^2 -2n \left(\abs{\xi}^2 - \abs{\zeta}^2\right)\\ &- \frac{\rho^2-(3+\lambda)-\lambda\rho^{-2}}{\lambda(1+\lambda) } \left[n^2 \left(\lambda^2\abs{\xi}^2 - \abs{\zeta}^2\right)+ \lambda \abs{n}(n-1) \abs{\xi+\zeta}^2  \right]
\end{split}
\]
and
\[Q_0(\xi, \zeta)= \abs{\xi \log \rho + \zeta}^2 -2\re (\xi \, \bar \zeta)\]
The form $Q_0$ is positive definite provided $\rho > \sqrt{e}$. The identity
\[Q_1(\xi, \zeta)= \frac{\rho^2-(3-\lambda^2)  +\lambda^2\rho^{-2}  }{\lambda (1+\lambda)^2} \abs{\lambda \xi - \zeta}^2 \]
shows that $Q_1$ is positive semidefinite. Proposition~\ref{bigrprop}, together with the equality
statement, will follow once we show that the forms $Q_n(\xi, \zeta)$ are positive definite for
$n \ge 2$ and for $n \le -1$. For this we must prove
\begin{equation}\label{goal101}
A_nB_n>C_n^2, \qquad \text{and }\ A_n,B_n>0
\end{equation}
Explicitly,
\begin{equation*}\label{coef1}
\begin{split}
A_n&=  \rho^{2n} - \left[\frac{(\rho+\lambda/\rho)^2}{(1+\lambda)^2 } - 2 \frac{1-\lambda}{1+\lambda}\right] n  -2n \\ & \phantom{= \rho^{2n}\,\,} - \frac{\rho^2-(3+\lambda)-\lambda\rho^{-2}}{\lambda(1+\lambda) } \left[n^2 \lambda^2 + \lambda \abs{n}(n-1)  \right]; \\
B_n&= \rho^{-2n} - \left[\frac{(\rho+\lambda/\rho)^2}{(1+\lambda)^2 } - 2 \frac{1-\lambda}{1+\lambda}\right] n  +2n\\ & \phantom{= \rho^{-2n}\,\, } - \frac{\rho^2-(3+\lambda)-\lambda\rho^{-2}}{\lambda(1+\lambda) } \left[-n^2 + \lambda \abs{n}(n-1)  \right]; \\
C_n&=1- \left[\frac{(\rho+\lambda/\rho)^2}{(1+\lambda)^2 } - 2 \frac{1-\lambda}{1+\lambda}\right] n - \frac{\rho^2-(3+\lambda)-\lambda\rho^{-2}}{ 1+\lambda }  \abs{n}(n-1)  .
\end{split}
\end{equation*}
Our proof of~\eqref{goal101}, while elementary, is somewhat lengthy.
We split it into three cases: $n\ge 3$, $n=2$, and $n\le -1$.
Inequalities $\rho^2\ge 7$ and $0<\lambda\le 1$ will be used repeatedly without mention.

\begin{center}
\textsc{Case  $n \ge 3$}
\end{center}

Inequality~\eqref{goal101} for $n\ge 3$ is an immediate consequence of the following estimates:
\begin{align}
A_n & \ge (7^{n-1}-n^2)\rho^2 \label{Aineq1} \\
B_n &\ge \frac{31}{28}\frac{n\rho^{2}}{7\lambda} \label{Bineq1} \\
0\le -C_n &\le n^2\rho^2 \left(\frac{1}{1+\lambda}-\frac{\lambda}{n(1+\lambda)^2}\right)  \label{Cineq1} \\
\frac{31}{28} \left(\frac{7^{n-1}}{n^3} - \frac{1}{n}\right) &> 7 \lambda \left( \frac{1}{1+\lambda} - \frac{\lambda}{n(1+\lambda)^2}  \right)^2 \label{Dineq1}
\end{align}

\textsc{Proof of~\eqref{Aineq1}.} We write $A_n= \rho^{2n}+ \alpha_2 \rho^2 + \alpha_0 + \alpha_{-2} \rho^{-2}$, where the coefficients  depend on $n$ as well as on $\rho$ and $\lambda$. Specifically,
\begin{equation}\label{alphas}
\begin{split}
\alpha_2&=\frac{\lambda n}{(1+\lambda)^2}-n^2 \\
\alpha_{-2}&=\frac{1}{1+\lambda}\left[-\frac{\lambda^2n}{1+\lambda}+n^2\lambda+\lambda n(n-1)\right]\ge 0 \\
\alpha_0&=-\frac{2\lambda n}{(1+\lambda)^2} +2n\frac{1-\lambda}{1+\lambda}-2n +\frac{3+\lambda}{1+\lambda}(n^2\lambda+n(n-1))\ge 0
\end{split}
\end{equation}
where the positivity of $\alpha_0$ can be observed by rearranging it as
\[
\begin{split}
\alpha_0&=2n\frac{1-\lambda}{1+\lambda} + \frac{\lambda n}{1+\lambda}\left\{(3+\lambda)n-\frac{2}{1+\lambda}\right\}\\
&+\frac{n}{1+\lambda}\left\{(3+\lambda)(n-1)-2(1+\lambda)\right\}
\end{split}
\]
Thus
\begin{equation}\label{An1}
A_n\ge \rho^{2n} + \alpha_2 \rho^2 = (\rho^{2n-2}-n^2)\rho^2\ge (7^{n-1}-n^2)\rho^2.
\end{equation}

\textsc{Proof of~\eqref{Bineq1}.} Ignoring the positive term $\rho^{-2n}$ in $B_n$ and using the inequality
$\lambda n(n-1)\le n(n-1)$, we obtain the desired estimate
\[
\begin{split}
B_n&\ge
 - \left[\frac{(\rho+\lambda/\rho)^2}{(1+\lambda)^2 } - 2 \frac{1-\lambda}{1+\lambda}\right] n  +2n + n\frac{\rho^2-(3+\lambda)-\lambda\rho^{-2}}{\lambda(1+\lambda) } \\
 &=\frac{n\rho^{2}}{\lambda} \left(\frac{1 -(3-\lambda^2) \rho^{-2}}{(1+\lambda)^2}- \frac{\lambda (1+\lambda+\lambda^2)}{\rho^{4}(1+\lambda)^2}\right) \ge \frac{n\rho^{2}}{\lambda } \left(\frac{4+\lambda^2}{7(1+\lambda)^2}- \frac{1}{49}\right)\\ &=\frac{n\rho^{2}}{7\lambda } \left(\frac{4+\lambda^2}{(1+\lambda)^2}- \frac{1}{7}\right)
= \frac{n\rho^{2}}{7\lambda } \left(1+ \frac{3-2\lambda}{(1+\lambda)^2}- \frac{1}{7}\right)
\ge \frac{31}{28}\frac{n\rho^{2}}{7\lambda }
\end{split}
\]

\textsc{Proof of~\eqref{Cineq1}.} The first inequality in~\eqref{Cineq1} follows from
\[
C_n\le 1- \frac{n}{1+\lambda}\left[\frac{(\rho+\lambda/\rho)^2}{1+\lambda} - 2(1-\lambda)\right]
\le 1-\left[\frac{7}{2} - 2\right]<0.
\]
Writing $C_n= \gamma_2\rho^2 + \gamma_0+\gamma_{-2}\rho^{-2}$, we find
\begin{equation}\label{gammas}
\begin{split}
\gamma_2&=  n^2 \left\{ \frac{\lambda}{n(1+\lambda)^2}- \frac{1}{1+\lambda}\right\} \le 0 \\
\gamma_0&= 1+\frac{n}{1+\lambda}\left\{\frac{-2\lambda}{1+\lambda}+2(1-\lambda)+(3+\lambda)(n-1)\right\}\ge 0 \\
\gamma_{2}& = \frac{n}{1+\lambda}\left\{\lambda(n-1)-\frac{\lambda^2}{1+\lambda}\right\}\ge 0
\end{split}
\end{equation}
Hence $-C_n \le - \gamma_2(n)\rho^2$ as desired.

\textsc{Proof of~\eqref{Dineq1}.} When $n \ge 4$, we use the monotonicity of the lefthand side of~\eqref{Dineq1} to obtain
\[\left(\frac{7^{n-1}}{n^3} - \frac{1}{n}\right)> \frac{7^3}{4^3}-\frac{1}{4}> \frac{7}{4} \ge \frac{7\lambda}{(1+\lambda)^2},
\]
which implies~\eqref{Dineq1}. When $n=3$, the inequality~\eqref{Dineq1} takes form
\[ \frac{ \lambda (3+2 \lambda)^2}{(1+\lambda)^4}< \frac{310}{147}\]
This holds even if the fraction on the right is replaced with $2$, because
\[
\lambda (3+2 \lambda)^2 - 2(1+\lambda)^4 = -2 + \lambda -4\lambda^3-\lambda^4<0
\]

\begin{center}
\textsc{Case $n=2$}
\end{center}

Compared to the case $n\ge 3$, we will have to keep more terms in the estimates for $A_n$ and $C_n$. It follows from~\eqref{alphas} and~\eqref{gammas} that
\[A_2\ge \rho^4+\alpha_2\rho^2 \quad \text{ and } \quad \abs{C_2}\le -\gamma_2\rho^2-\gamma_0\]
By~\eqref{Bineq1}, $B_2\ge \beta \rho^2$ with $\beta =\frac{31}{98\lambda}$.
Together, these estimates yield
\[
\begin{split}
A_2B_2-C_2^2 &\ge \beta \rho^2 (\rho^4+\alpha_2\rho^2)- (\gamma_2\rho^2+\gamma_0)^2\\ &
= \beta \rho^6+ (\beta \alpha_2 - \gamma_2^2) \rho^4 - 2 \gamma_0\gamma_2 \rho^2 - \gamma_0^2
\end{split}
\]
Dividing by $\rho^2$ and noticing that $- \gamma_0^2/\rho^{2} \ge - \gamma_0^2/7$, we reduce the task to showing
that
\begin{equation}\label{goal100}
\beta \rho^4 + (\beta \alpha_2 - \gamma_2^2) \rho^2 - 2 \gamma_0\gamma_2  - \frac{\gamma_0^2}{7} > 0
\end{equation}
Substitution $\rho^2=t+7$ turns the lefthand side of~\eqref{goal100} into a quadratic polynomial in $t$ with
leading coefficient $\beta>0$. It remains to show that the coefficients of $t^1$ and $t^0$ are also positive, i.e.,
\begin{align}
&14 \beta + \beta \alpha_2 - \gamma_2^2 >0 \label{goal99}\\
&49 \beta + 7 (\beta \alpha_2 - \gamma_2^2)  - 2 \gamma_0\gamma_2  - \frac{\gamma_0^2}{7}  >0 \label{goal98}
\end{align}

\textsc{Proof of~\eqref{goal99}.} Recall that
\begin{align*}
\alpha_2&= \frac{2 \lambda}{(1+\lambda)^2}-4 \\
\gamma_2&= \frac{2\lambda}{(1+\lambda)^2} - \frac{4}{1+\lambda} \\
\gamma_0&= \frac{1}{1+\lambda} \left( 11-\lambda - \frac{4\lambda}{1+\lambda}  \right)
\end{align*}
Since $\beta>\frac{2}{7 \lambda}$, it follows that
\[
\begin{split}
 \beta(14 + \alpha_2) - \gamma_2^2& \ge  \frac{2}{7 \lambda} \left( 10 + \frac{2\lambda}{(1+\lambda)^2}\right) - \left(  \frac{4}{1+\lambda} - \frac{2\lambda}{(1+\lambda)^2}   \right)^2\\
& = \frac{1}{7(1+\lambda)^2} \left[  \frac{20}{\lambda} + 44 + 20 \lambda - 7 \left( 4- \frac{2\lambda}{1+\lambda}  \right)^2     \right]
\end{split}
\]
Next, we estimate $4- \frac{2\lambda}{1+\lambda} $ from above by $4- \lambda$ and then use the arithmetic-geometric mean inequality $x+y\ge \sqrt{4xy}$.
\[
\begin{split}
\beta(14+ \alpha_2) - \gamma_2^2& \ge  \frac{1}{7(1+\lambda)^2} \left[  \frac{20}{\lambda} + 76 \lambda - 68- 7 \lambda^2   \right] \\
& \ge \frac{1}{7(1+\lambda)^2} \left( \sqrt{80\cdot 76} - 75 \right) >0
\end{split}
\]

\textsc{Proof of ~\eqref{goal98}.} First complete the square and rearrange the terms as follows.
\[
\begin{split}
49 \beta &+ 7 (\beta \alpha_2 - \gamma_2^2)  - 2 \gamma_0\gamma_2  - \frac{\gamma_0^2}{7} = 7 \beta (7 + \alpha_2) -\frac{1}{7} \left(7\gamma_2+ \gamma_0 \right)^2 \\
&= \frac{1}{14 (1+\lambda)^2} \left[ \frac{93}{\lambda} + 248 + 93 \lambda - 2 \left( 17 + \lambda - \frac{10 \lambda}{1+\lambda}  \right)^2    \right]
\end{split}
\]
Similar to the proof of~\eqref{goal99}, we estimate $17 + \lambda - \frac{10 \lambda}{1+\lambda} $ from above by
$17-4 \lambda$ and finally use the arithmetic-geometric mean inequality.
\[
\begin{split}
49 \beta + 7 (\beta \alpha_2 - \gamma_2^2)  - 2 \gamma_0\gamma_2  - \frac{\gamma_0^2}{7} & \ge
\frac{1}{14 (1+\lambda)^2} \left[  \frac{93}{\lambda} +365 \lambda -330  - 32 \lambda^2   \right] \\
& \ge \frac{1}{14 (1+\lambda)^2} \left[   \sqrt{372 \cdot 365} - 362 \right]>0
\end{split}
\]

\begin{center}
\textsc{Case  $n \le -1$}
\end{center}

For convenience we set  $n=-m$ where $m$ is a positive integer. Inequality~\eqref{goal101} for $n\le -1$ is a direct consequence of the estimates
\begin{align}
A_{-m}&\ge \frac{3m\rho^2}{2(1+\lambda)} \label{Aineq2} \\
B_{-m}&\ge 7^{m-1}\rho^2+ \frac{m\rho^2}{1+\lambda}\left[ \frac{1}{1+\lambda} + \frac{11}{49}m + \frac{4m}{7 \lambda} \right]
\label{Bineq2} \\
0\le C_{-m} &\le \frac{m\rho^2}{(1+\lambda) } \left( m+\frac{3}{7}+ \frac{1}{1+\lambda}\right) \label{Cineq2} \\
\frac{2}{3} \left(   1+ \frac{1}{m} \left[ \frac{3}{7}+ \frac{1}{1+\lambda} \right]   \right)^2
&< \frac{7^{m-1}}{m^3}(1+\lambda)+ \frac{1}{m^2} \left[ \frac{1}{1+\lambda} + \frac{11}{49}m + \frac{4m}{7 \lambda} \right] \label{Dineq2}
\end{align}

Before proving~\eqref{Aineq2}--\eqref{Dineq2} we write down the coefficients of $Q_{n}$ in terms of $m$.
\begin{align*}
A_{-m}&=  \rho^{-2m} + m\left[\frac{(\rho+\lambda/\rho)^2}{(1+\lambda)^2 } - 2 \frac{1-\lambda}{1+\lambda}\right]   +2m \\
&\phantom{=  \rho^{-2m}\,\,} - \frac{\rho^2-(3+\lambda)-\lambda\rho^{-2}}{\lambda(1+\lambda) } \left[m^2 \lambda^2 - \lambda m(m+1)  \right]; \\
B_{-m}&= \rho^{2m} +m \left[\frac{(\rho+\lambda/\rho)^2}{(1+\lambda)^2 } - 2 \frac{1-\lambda}{1+\lambda}\right]   -2m\\
&\phantom{=  \rho^{2m}\,\,}  + \frac{\rho^2-(3+\lambda)-\lambda\rho^{-2}}{\lambda(1+\lambda) } \left[m^2 + \lambda m (m+1)  \right]; \\
C_{-m}&=1 + m\left[\frac{(\rho+\lambda/\rho)^2}{(1+\lambda)^2 } - 2 \frac{1-\lambda}{1+\lambda}\right]  + \frac{\rho^2-(3+\lambda)-\lambda\rho^{-2}}{ 1+\lambda }  m(m+1).
\end{align*}

\textsc{Proof of~\eqref{Aineq2}.}
Ignoring the term $\rho^{-2m}$ in $A_{-m}$ and using
\[\left[m^2 \lambda^2 - \lambda m(m+1)  \right]\le -\lambda m,\] we obtain the estimate
\[
\begin{split}
A_{-m}&\ge \frac{m}{1+\lambda}\left[\frac{(\rho+\lambda/\rho)^2}{1+\lambda}- 2(1-\lambda) +2 (1+\lambda)
+\rho^2-(3+\lambda)-\lambda\rho^{-2}\right] \\
&= \frac{3m\rho^2}{2(1+\lambda)}+ \frac{m}{(1+\lambda)^2}
\left[\frac{1-\lambda}{2}\rho^2+2\lambda-3+3\lambda^2-\lambda^2\rho^{-2}\right]
\end{split}
\]
The term in brackets is positive since
\[
\frac{7-7\lambda}{2}+2\lambda-3+3\lambda^2-\frac{\lambda^2}{7}\ge \frac{1-3\lambda+5\lambda^2}{2}>0
\]
This yields~\eqref{Aineq2}.

\textsc{Proof of~\eqref{Bineq2}.}
Let us write $B_{-m}= \rho^{2m}+ \beta_2 \rho^2 + \beta_0 + \beta_{-2} \rho^{-2}$.
We will show that $\beta_0$ and $\beta_{-2}$ are negative, and estimate $B_{-m}$ from below by
\begin{equation}\label{est7}
B_{-m} \ge \rho^2\bigg(7^{m-1}+ \beta_2+\frac{\beta_0}{7}+\frac{\beta_{-2}}{49}\bigg)
\end{equation}
Indeed,
\[\beta_{-2}=    \frac{m}{1+\lambda}  \left[   \frac{\lambda^2}{1+\lambda }   - \left(m + \lambda (m+1) \right) \right]\]
is clearly negative and can be estimated as
\begin{equation}\label{beta3}
\beta_{-2} \ge - \frac{3m^3}{1+\lambda}
\end{equation}
Also,
\[\beta_0= \frac{m}{1+\lambda} \left[  \frac{2 \lambda}{1+\lambda} -4- \frac{3+\lambda}{\lambda}m-( 3+\lambda) (m+1) \right]\]
is negative and  can be estimated as
\begin{equation}\label{beta2}
\beta_0 \ge  \frac{m}{1+\lambda} \left[  -7 -5m - \frac{3}{\lambda }m  \right]
\end{equation}
We also have
\begin{equation}\label{beta1}
\beta_2= \frac{m}{1+\lambda} \left[ \frac{1}{1+\lambda} + m+1 + \frac{m}{\lambda} \right]
\end{equation}
Formulas~\eqref{est7}--\eqref{beta1} yield~\eqref{Bineq2}.

\textsc{Proof of~\eqref{Cineq2}.} The first inequality in~\eqref{Cineq2} follows from
\[
C_{-m}\ge \frac{m}{1+\lambda}\left[\frac{(\rho+\lambda/\rho)^2}{1+\lambda} - 2(1-\lambda)\right]
\ge m\left[\frac{7}{2} - 2\right]>0
\]
Writing $C_{-m}= \gamma_2\rho^2 + \gamma_0+\gamma_{-2}\rho^{-2}$, we observe that
\[
\begin{split}
\gamma_{-2}&=\frac{m}{1+\lambda}\left[\frac{\lambda^2}{1+\lambda}  - \lambda (m+1)\right]\le 0\\
\gamma_0&=1 +
\frac{m}{1+\lambda}\left[\frac{2\lambda}{1+\lambda} - 2(1-\lambda) -(3+\lambda)(m+1)  \right]
\\& \le  1 +
\frac{m}{1+\lambda}\left[4\lambda-2 -2(3+\lambda)\right]
\le  m- \frac{6m}{1+\lambda} \le - \frac{4m}{1+\lambda}\\
\gamma_2&= \frac{m}{(1+\lambda) } \left( m+1+ \frac{1}{1+\lambda}   \right)
\end{split}\]
Hence
\[
C_{-m} \le \gamma_2\rho^2+\gamma_0 \le \rho^2(\gamma_2+\gamma_0/7)
\le \frac{m\rho^2}{(1+\lambda)} \bigg( m+\frac{3}{7}+ \frac{1}{1+\lambda} \bigg)
\]

\textsc{Proof of~\eqref{Dineq2}.}
When $m \ge 3$, the righthand side of~\eqref{Dineq2} is greater than $7^2/3^3=49/27$ while the lefthand side is at most
\[   \frac{2}{3} \left(   1+ \frac{1}{3} \left[ \frac{3}{7}+ 1 \right]   \right)^2 =
\frac{2}{3} \left(   1+ \frac{10}{21}   \right)^2<\frac{2}{3} \left(\frac{3}{2}\right)^2=\frac{3}{2}\]

When $m=2$, the lefthand side of~\eqref{Dineq2} is at most $96/49$ which is its value at $\lambda=0$. On the righthand side we have
\[ \begin{split} \frac{7}{8} (1+\lambda) &+    \frac{1}{4}   \left[ \frac{1}{1+\lambda} + \frac{22}{49} + \frac{8}{7 \lambda} \right] \ge \frac{7}{8} + \frac{7}{8} \lambda + \frac{1}{4}   \left[ \frac{1}{2} +0+  \frac{8}{7 \lambda} \right] \\
&= 1+ \frac{7}{8}\lambda + \frac{2}{7 \lambda} \ge 1+1=2
\end{split}
\]

When $m=1$, we rearrange the terms of~\eqref{Dineq2} so that it reads as
\begin{equation}\label{goal7}
\frac{2}{3} \left( \frac{10}{7}+ \frac{1}{1+\lambda}    \right)^2  -\frac{1}{1+\lambda} - \frac{60}{49}
< \lambda+        \frac{4}{7 \lambda}
\end{equation}
If $\lambda \le 1/4$, then
\[
\frac{2}{3} \left( \frac{10}{7}+ \frac{1}{1+\lambda}    \right)^2  -\frac{1}{1+\lambda} - \frac{60}{49}
\le  \frac{2}{3} \left( \frac{10}{7}+ 1 \right)^2  - \frac{4}{5}  - \frac{60}{49}<2 <\frac{4}{7\lambda}
\]
thus~\eqref{goal7} holds. For $\lambda > 1/4$ we obtain~\eqref{goal7} as follows,
\[
\begin{split}
\frac{2}{3} & \left\{ \left( \frac{10}{7}+ \frac{1}{1+\lambda}    \right)^2  -\frac{3}{2(1+\lambda)} - \frac{90}{49} \right\}\le  \frac{2}{3}  \left\{ \frac{10}{49}+ \frac{33}{14(1+\lambda)} \right\}  \\
& \le
\frac{2}{3}  \left\{ \frac{1}{4}+ 2 \right \} = \frac{3}{2} < \frac{4}{\sqrt{7}} \le \lambda+\frac{4}{7 \lambda}
\end{split} \]
where the last step is the arithmetic-geometric mean inequality.
\end{proof}

\section*{Acknowledgments}

Part of this research was carried out while two of the authors were visiting University of Helsinki.
They thank the university for its hospitality.
Figure~\ref{ennepprrr} was made using Maple~12.

\bibliographystyle{amsplain}

\end{document}